\newtheorem{df}{Definition}[section]
\newtheorem{theorem}[df]{Theorem}
\newtheorem{lemma}[df]{Lemma}
\newtheorem{corollary}[df]{Corollary}
\newtheorem{proposition}[df]{Proposition}
\title{Breaking the Symmetries of Amenable Graphs}
\author{Christine T. Cheng \\Department of Electrical Engineering and Computer Science \\ University of Wisconsin-Milwaukee}
\date{\today}
\begin{document}


\begingroup
\setlength{\parskip}{0pt}
\maketitle
\endgroup

\begin{abstract} 
 In this paper, we consider two ways of breaking a graph's symmetry:  {\it distinguishing labelings} and {\it fixing sets}.  A {\it distinguishing labeling} $\phi$ of $G$ colors the vertices of $G$ so that the only automorphism of the labeled graph $(G, \phi)$ is the identity map.  The {\it distinguishing number} of $G$, $D(G)$, is the fewest number of colors needed to create a distinguishing labeling of $G$.   A subset $S$ of vertices is a {\it fixing set} of $G$ if the only automorphism of $G$ that fixes every element in $S$ is the identity map. The {\it fixing number} of $G$, $Fix(G)$, is the size of a smallest fixing set.    A fixing set $S$ of $G$ can  be translated into  a distinguishing labeling $\phi_S$ by assigning distinct colors to the vertices in $S$ and assigning another color (e.g., the ``null" color) to the vertices not in $S$.  
 
 Color refinement is a well-known efficient heuristic for graph isomorphism.  A graph  $G$ is {\it amenable}  if, for any graph $H$, color refinement correctly determines whether $G$ and $H$ are isomorphic or not.   Using the characterization of amenable graphs by Arvind et al.~as a starting point, we show that both $D(G)$ and $Fix(G)$  can be computed in  $O((|V(G)|+|E(G)|) \log |V(G)|)$ time when $G$ is an amenable graph.
 \end{abstract}

\section{Introduction}
\label{sec:intro}

	A typical graph $G$ has many symmetries. 
It can be difficult if not impossible to tell one vertex from another.  A natural thing to do is to label or color  the  vertices of $G$ so that  the vertices can be distinguished from each other.   One option is to just assign distinct colors to the vertices.  But a more interesting option is to take into account the structure of $G$ and use as few colors as possible to break $G$'s symmetries.   We consider two ways of doing this: {\it distinguishing labelings} \cite{AlCo96} and {\it fixing sets} \cite{DBLP:journals/dm/ErwinH06, DBLP:journals/combinatorics/Boutin06, DBLP:journals/gc/FijavzM10}.
	
        A {\it distinguishing labeling} $\phi$ of $G$ colors the vertices of $G$ so that the only automorphism of the labeled graph $(G, \phi)$ is the identity map.  The {\it distinguishing number} of $G$, $D(G)$, is the fewest number of colors needed to create a distinguishing labeling of $G$.   For example,  $D(K_n) = n$, $D(P_n) = 2$ for $n \ge 2$ and $D(K_{n,n}) = n+1$. The notion of distinguishing labelings appeared as early as 1977 when Babai used them to create asymmetric infinite trees \cite{Ba77}, calling them {\it asymmetric colorings}.  Independently, Albertson and Collins \cite{AlCo96} defined distinguishing labelings and numbers for all graphs in 1996.  Since then hundreds of papers in graph theory and group theory have been published on this topic (e.g.,  \cite{KlWoZh06}, \cite{ ImSmTuWa15}, \cite{CoTr22},  \cite{ShAhTaHa22} and references therein).  

       A subset $S$ of vertices is a {\it fixing set} of $G$ if the only automorphism of $G$ that fixes every element in $S$ is the identity map. The {\it fixing number} of $G$, $Fix(G)$, is the size of a smallest fixing set.    A fixing set $S$ of $G$ can  be translated into  a distinguishing labeling $\phi_S$ by {\it individualizing} the vertices in $S$, assigning them distinct colors and assigning another color (e.g., the ``null" color) to the vertices not in $S$.   Thus,  $D(G)  \leq Fix(G) + 1$.   Sometimes, the equality holds but $D(G)$ and $Fix(G)$ can also be far apart.   For example, $Fix(K_n) = n-1$,  $Fix(P_n) = 1$ for $n \ge 2$ while $Fix(K_{n, n} )= 2(n-1)$ for $n \ge 2$.  Erwin and Harary \cite{DBLP:journals/dm/ErwinH06} defined fixing sets and numbers in 2006.  Around the same time, Boutin \cite{DBLP:journals/combinatorics/Boutin06} and Fijavz and Mohar \cite{DBLP:journals/gc/FijavzM10} wrote papers on the same subject, calling them {\it determining sets and numbers} and {\it rigid indices} respectively.  Like distinguishing labelings, many papers have been written about fixing sets. (e.g., \cite{DBLP:journals/jgt/Boutin09, DBLP:journals/combinatorics/CaceresGPS10, DBLP:journals/amc/GarijoHM14, GonzalezPuertas2019RemovingTwins} and the survey \cite{BaileyCameron2011BaseSizeMetricDimension}). 
       

       We are primarily interested in the algorithmic aspects of computing distinguishing and fixing numbers.  Let {\sc Dist}($G,k$) denote the problem: ``{\it Given a graph $G$ and an integer $k$, is $D(G) \leq k$?}''.  Define {\sc Fix}($G,k$)  similarly for the fixing numbers.  While these problems are not known to be in $P$ nor to be NP-complete, prior work  on the distinguishing numbers of  trees and forests \cite{ArDe04, Ch06}, planar graphs \cite{ArDe04, ArChDe08},  interval graphs \cite{Ch09} and unigraphs \cite{Ch25} suggests that when a graph class has an efficient isomorphism algorithm,  {\sc Dist}($G,k$) is solvable in polynomial-time.  One reason is that many isomorphism algorithms end up identifying the parts of a graph that are identical, which are also the parts that a distinguishing labeling must differentiate. Indeed, our strategy in \cite{Ch06, ArChDe08, Ch09} involved repurposing the isomorphism algorithms to count the number of inequivalent distinguishing labelings of a graph.  Another reason is that even when a graph class's isomorphism algorithm is not particularly useful, the fact that it is efficient suggests that the graph has a nice structure which can  be exploited to understand its  automorphisms and, consequently, compute its distinguishing number.  This was the case for unigraphs; Tyshkevich's canonical decomposition theorem \cite{Ty00} was the basis for our linear-time algorithm for computing the distinguishing number of a unigraph \cite{Ch25}.  Our current work falls in this category as well.

         Color refinement is a widely-used efficient heuristic for graph isomorphism.  As Grohe et al.~\cite{GroheKerstingMladenovSchweitzer2021ColorRefinement} state, ``more advanced graph isomorphism tests and almost all practical isomorphism tools use color refinement as a subroutine."   To determine if $G$ and $H$ are isomorphic, the heuristic starts with $\{V(G) \cup V(H) \}$ as the initial partition.  It then refines the  partition repeatedly based on the local neighborhoods of the vertices in the prior iteration.  Eventually, no more refinements can be made and the procedure ends.   The heuristic concludes that $G$ and $H$ are isomorphic if and only if $G$ and $H$ have the same number of vertices in each cell of the final partition. Unfortunately, color refinement can mistakenly conclude that two  graphs are isomorphic when they are not  (e.g. $C_6$ and $2C_3$).   
         
         In this paper, we focus on  {\it amenable graphs}.  A graph  $G$ is {\it amenable}  if, for any graph $H$, color refinement  correctly determines whether $G$ and $H$ are isomorphic or not.   That is, color refinement is the isomorphism algorithm for amenable graphs.  Arvind et al.~\cite{Arvind2017GraphIsomorphism} presented the first combinatorial characterization of amenable graphs and showed that they can be recognized in $O((n+m) \log n)$ time where $n = |V(G)|$ and $m = |E(G)|$.  The characterization involves meta-structures called {\it anisotropic components} which, remarkably, form rooted trees.   But the graphs induced by the anisotropic components can be complicated.  
        For our first contribution, we show that a series of transformations can be applied to the induced subgraphs so that an amenable graph $G$ can be viewed as the union of  {\it celled jellyfish graphs}.   Since jellyfish graphs are tree-like,  we generalize the approach in \cite{Ch06} to  compute the distinguishing number of each celled jellyfish graph.  We then combine the results to determine $D(G)$.  The algorithm runs in  $O((n+m) \log n)$ time.


       We also want to compute the fixing numbers of amenable graphs.  Since distinguishing labelings and fixing sets are closely related,  one would expect more interaction between the two topics. Unfortunately, we know of  only a few papers where this is the case \cite{DBLP:journals/combinatorics/AlbertsonB07, DBLP:journals/combinatorics/CaceresGPS10}.   For our second contribution, we show that the methodology described above is just as effective, and  less cumbersome, for computing the fixing numbers of amenable graphs which can also be done in  $O((n+m) \log n)$ time.  In light of this result, we suspect that the fixing numbers of planar graphs, interval graphs and unigraphs can also be computed in polynomial time.  
       
        Section 2 provides background information and preliminary results about jellyfish graphs.  Sections 3 and 4  tackle the problem of computing the distinguishing numbers of amenable graphs while Section 5 deals with the fixing number of amenable graphs.  
       

{\it Related works.}	A graph $G$ is {\it rigid} when the only automorphism of $G$ is the identity map.  Notice that $D(G) = 1$ and $F(G) = 0$ if and only if $G$ is rigid.  Thus, {\sc Dist}($G,1)$ and {\sc Fix}($G,0$) is at least as hard as {\sc Rigid}, the problem of determining if a graph is rigid. All three problems are in co-NP since a non-trivial automorphism of $G$ can be used to verify that $D(G) > 1, F(G) > 0$ and $G$ is not rigid.  In \cite{RuSu98},  Russell and Sundaram proved further that {\sc Rigid} and {\sc Dist}($G,k$) are in the the class AM, the set of languages for which there are Arthur and Merlin games.  They also noted that it is unlikely that   {\sc Dist}($G,k$) is co-NP hard.   No comparable results are known for {\sc Fix}($G,k$). 
         In \cite{DBLP:conf/mfcs/ArvindFKKR16}, Arvind et al.~also considered {\sc Fix}($G,k)$ from the perspective of parameterized complexity.  They showed that {\sc Fix}($G,k)$  is MINI[1]-hard when parameterized by $k$. We are not aware of any FPT results for distinguishing numbers.

 \section{Background and Preliminary Results}
 \label{sec:prelims}
 
 We begin by providing background information on color refinement and amenable graphs, distinguishing labelings and numbers, and fixing sets and numbers.  We also present new results about the distinguishing and fixing numbers of jellyfish graphs which will play an important role in the latter sections.

 \subsection{Color Refinement and Amenable Graphs}  

Let $G$ be a graph.  A {\it partition} of $V(G)$ is a collection of its subsets $\mathcal{P} = \{V_1, V_2, \hdots, V_k\}$ such that $\bigcup_{1 \leq i \leq k} V_i = V(G)$ and $V_i \cap V_j  = \emptyset$ for $i \neq j$.  Each element of the partition is called a {\it cell}.   A partition is {\it discrete} when each one of its cells has size one.  Given partitions $\mathcal{P}_1 = \{V_1, V_2, \hdots, V_k\}$ and $\mathcal{P}_2 = \{W_1, W_2, \hdots, W_\ell\}$,  we say $\mathcal{P}_1$ is {\it coarser} than $\mathcal{P}_2$ 
if for each $W_j$ there is a $V_i$ so that $W_j \subseteq V_i$ and $k < \ell$. 


For $X \subseteq V(G)$, let $G[X]$ denote the subgraph of $G$ induced by the vertices in $X$.  Similarly, for two disjoint subsets $X, Y$ of $V(G)$, let $G[X,Y]$ denote the bipartite subgraph induced by the vertices in $X$ and $Y$.   A partition $\mathcal{P} = \{V_1,  V_2,  \hdots, V_k\}$ of $V(G)$ is an {\it equitable partition} of $G$ if,  for all $V_i$ and for all $u, v \in V_i$, the vertices $u$ and $v$ have the same number of neighbors in {\it every} cell $V_j$,  including $V_i$ itself.  That is, $G[V_i]$  is a regular graph for each $i$ and   $G[V_i, V_j]$, is a biregular graph\footnote{All the vertices in $V_i$ have the same number of neighbors in $V_j$ and all the vertices in $V_j$ have the same number of neighbors in $V_i$.} for each $i, j$. Notice that  the discrete partition of $V(G)$ is always an equitable partition of $G$ while $\{V(G)\}$ is an equitable partition of $G$ if and only if $G$ is a regular graph.  All the equitable partitions of $G$ can be ordered using the {\it coarser than} relation and form a lattice \cite{McKayThesis, DBLP:journals/dm/RamanaSU94}.  Thus, $G$ has a {\it coarsest equitable partition} and a {\it finest equitable partition}, the latter of which is the discrete partition. 
   
    Two graphs $G$ and $H$ are {\it isomorphic}, $G \cong H$,  if they are structurally identical.  That is, there is a bijection $\pi: V(G) \rightarrow V(H)$ that preserves adjacencies --  for any two vertices $u, v \in V(G)$,  $uv \in E(G)$ if and only if $\pi(u) \pi(v) \in E(H)$.    The bijection $\pi$ is called an {\it isomorphism} from $G$ to $H$. When $H = G$, the bijection $\pi$ is referred to as an {\it automorphism} of $G$.  The automorphism is {\it trivial} if it is the identity map $Id$; otherwise, it is {\it non-trivial}.  The set of automorphisms of $G$, $Aut(G)$,  form a group under the composition of functions so it is called the {\it automorphism group} of $G$.    When
  $G$ and $H$ are colored, their isomorphisms and automorphisms also preserve colors. 

    {\it Color refinement} or CR  (also known as the {\it 1-dimensional Weisfeiler-Leman algorithm}) is a popular heuristic for graph isomorphism \cite{GroheKerstingMladenovSchweitzer2021ColorRefinement}.  But fundamentally it is an algorithm that finds an equitable partition of a graph.  Given graph $G$, it starts with an initial partition $\mathcal{P}_0$ of $V(G)$. It then refines the partition iteratively until an equitable partition of $G$ is obtained.  
      Specifically, at iteration $i$, the partition $\mathcal{P}_i$ is constructed from the previous partition $\mathcal{P}_{i-1}$ as follows: two vertices are part of the same cell in $\mathcal{P}_i$ if and only if they are from the same cell of $\mathcal{P}_{i-1}$ and  have the same number of neighbors in each cell of $\mathcal{P}_{i-1}$.  The algorithm ends when $\mathcal{P}_i = \mathcal{P}_{i-1}$.    When $\mathcal{P}_0 = \{V(G)\}$,  the final partition $\mathcal{P}_G$ is referred to as the {\it stable partition} of $G$.   It is clearly an equitable partition.  In fact, it is the {\it coarsest} equitable partition of $G$  \cite{McKayThesis}.  Figure \ref{ExampleFig} shows  an example of how color refinement works.  Color refinement can be implemented in $O((n+m) \log n)$ time where $n = |V(G)|$ and $m = |E(G)|$ \cite{CardonCrochemore1982PartitioningAGraph}.   
    
    To determine if two graphs $G$ and $H$ are isomorphic,  color refinement is applied to $G \cup H$.   If $G \cong H$,  then $|V(G)\cap X| =  |V(H)\cap X|$ for every cell $X$ of $\mathcal{P}_{G \cup H}$.   Thus,  if  there is some cell $X$ so that $|V(G)\cap X| \neq  |V(H)\cap X|$,  the algorithm concludes that $G \not \cong H$.  Otherwise, it concludes that $G \cong H$.   Unfortunately, the latter part can sometimes be wrong.  It is known that CR cannot distinguish two graphs that are {\it fractionally isomorphic} \cite{DBLP:journals/dm/RamanaSU94} such as $C_6$ and $2C_3$.   For this reason, CR is often just referred to as a heuristic for graph isomorphism.

 
Nonetheless,  as Arvind et al.~noted in \cite{Arvind2017GraphIsomorphism}, CR works correctly for some graph classes like unigraphs and trees.  They then investigated the graphs  for which CR is a perfect graph isomorphism algorithm.   A graph  $G$ is {\it amenable}  if, for any graph $H$, CR correctly determines whether $G$ and $H$ are isomorphic or not.   We now present their combinatorial characterization of amenable graphs.


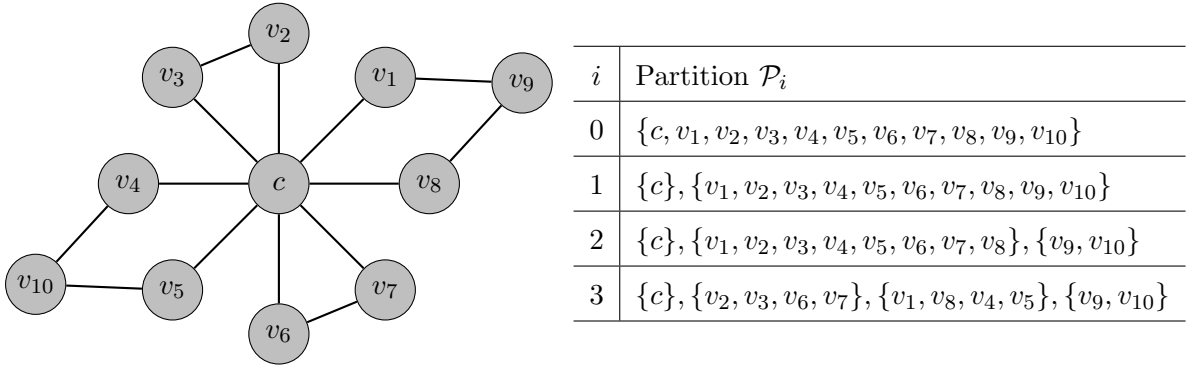
\begin{figure}
\begin{minipage}{0.45\textwidth} 
\begin{tikzpicture}[
    vertex/.style={circle, draw, fill=lightgray, minimum size=0.8cm, inner sep=0pt},
    edge_style/.style={thick}
]

  \node[vertex, fill=lightgray] (center) at (0,0) {$c$}; 

  \def\radius{2cm} 

  \foreach \i in {1,...,8} {
    \node[vertex] (v\i) at (\i*45:\radius) {$v_\i$}; 
  }

\node[vertex] (v9) at (22.5:3.5cm) {$v_9$};
\node[vertex] (v10) at (202.5:3.5cm) {$v_{10}$};

  \foreach \i in {1,...,8} {
    \draw[edge_style] (center) -- (v\i);
  }
  
  \draw[edge_style] (v1) -- (v9);
  \draw[edge_style] (v8) -- (v9);
  \draw[edge_style] (v4) -- (v10);
  \draw[edge_style] (v5) -- (v10); 
   \draw[edge_style] (v2) -- (v3);
    \draw[edge_style] (v6) -- (v7); 
\end{tikzpicture}
 \end{minipage}
 \begin{minipage}{0.45\textwidth} 
  \centering
  \begin{tabular}{c|l}
  \hline
   $i$  & Partition $\mathcal{P}_i$ \\
  \hline
   0 & $\{c, v_1, v_2, v_3, v_4, v_5, v_6, v_7, v_8, v_9, v_{10}\}$ \\
  \hline
   1 & $\{c\}, \{v_1, v_2, v_3, v_4, v_5, v_6, v_7, v_8, v_9, v_{10}\}$ \\
  \hline
  2 & $\{c\}, \{v_1, v_2, v_3, v_4, v_5, v_6, v_7, v_8 \}, \{v_9, v_{10} \}$ \\
  \hline
   3 & $\{c\}, \{v_2, v_3, v_6, v_7\}, \{v_1, v_8, v_4, v_5\}, \{v_9, v_{10} \}$ \\
  \hline
\end{tabular}
\end{minipage}
\caption{Color refinement is applied to graph $G$ on the left with $\mathcal{P}_0 = \{V(G)\}$. }
\label{ExampleFig}
\end{figure}


 
 
 


A graph or a bipartite graph is {\it empty} if it has no edges.  A {\it matching graph} is a graph of the form $rK_2$ for some $r \ge 2$.  
 We start with the local structure of amenable graphs.

\begin{lemma} (\cite{Arvind2017GraphIsomorphism}, Lemma 3)
Let $G$ be an amenable graph.  The stable partition $\mathcal{P}_G$ has the following properties:

\noindent (A) For any cell $X \in \mathcal{P}_G$,  $G[X]$ is a $5$-cycle,  an empty graph, a complete graph,   a matching graph or its complement; 

\noindent (B) For any two cells $X, Y \in \mathcal{P}_G$, $G[X,Y]$ is an empty graph, a complete bipartite graph, a disjoint union of stars $sK_{1,t}$ where the $s$ centers of the stars  are on one side and  the $st$ leaves of the stars are on the other side,  or the bipartite complement of the last graph.  
\label{local}
\end{lemma}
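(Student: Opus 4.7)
The plan is to prove both parts by contradiction using the defining property of amenability: if $G$ is amenable, then any graph $G'$ that is indistinguishable from $G$ by color refinement must be isomorphic to $G$. Let $\mathcal{P}_G = \{V_1, \ldots, V_k\}$ be the stable partition. Since $\mathcal{P}_G$ is equitable, each $G[V_i]$ is $d_i$-regular for some $d_i$, and each $G[V_i, V_j]$ is biregular. The heart of the argument is a classification lemma: the graphs listed in (A) are precisely the $d$-regular graphs on $n$ vertices that are uniquely determined up to isomorphism by the pair $(n, d)$, and those in (B) are precisely the biregular bipartite graphs determined by the quadruple $(|X|, |Y|, d_X, d_Y)$. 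The four families in (A) correspond to the extremal degree cases $d \in \{0, 1, n-2, n-1\}$ (with the appropriate parity conditions on $n$), and $C_5$ is the sole sporadic case, namely the unique $2$-regular graph on $5$ vertices.

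Given this classification, the main argument is the following swap construction. Suppose $G[V_i]$ is $d_i$-regular but not of one of the listed types. Then there exists another $d_i$-regular graph $H$ on $|V_i|$ vertices with $H \not\cong G[V_i]$. Form $G'$ from $G$ by deleting the edges inside $V_i$ and inserting the edges of $H$ on the same vertex set. Because only edges within $V_i$ have changed and $H$ is also $d_i$-regular, $\mathcal{P}_G$ remains an equitable partition of $G'$ with identical cell-to-cell degree data. Hence running color refinement on the disjoint union $G \cup G'$ produces a stable partition whose cells split evenly between the two graphs, so CR declares $G \cong G'$. However, any genuine isomorphism $G \to G'$ must preserve the stable partition and in particular map $V_i$ to $V_i$, which would force $G[V_i] \cong H$, contradicting our choice of $H$. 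So CR errs on $G \cup G'$, contradicting amenability. An analogous swap on $G[V_i, V_j]$, replacing the bipartite graph by a non-isomorphic one with the same degree pair, proves part (B).

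The main obstacle is the classification lemma itself. Verifying that each listed graph type is uniquely determined by its degree data is a routine case check. The harder direction is exhibiting two non-isomorphic $d$-regular graphs on $n$ vertices whenever $(n,d)$ falls outside the listed ranges (and similarly for the bipartite case). For the regular case, one observes that $d = 2$ with $n = 6$ already admits the distinct witnesses $C_6$ and $2C_3$, and more generally a disjoint-union or $2$-switch construction yields two non-isomorphic $d$-regular graphs on $n$ vertices whenever $2 \leq d \leq n-3$, except in the sporadic case $(n,d) = (5,2)$. The bipartite analogue proceeds by analogous $2$-switch and disjoint-union constructions on $K_{s,t}$-like building blocks, with the star-union family and its complement as the only exceptional configurations beyond the empty and complete bipartite extremes.
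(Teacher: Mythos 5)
Your proposal is correct and follows essentially the same route the paper takes: it notes (citing Arvind et al.) that each $G[X]$ must be a regular unigraph and each $G[X,Y]$ a biregular unigraph, since otherwise swapping in a non-isomorphic regular or biregular replacement yields a graph that color refinement cannot distinguish from $G$, and the listed families are exactly the regular and biregular unigraphs. Your added detail on the classification (extremal degrees $d\in\{0,1,n-2,n-1\}$ plus the sporadic $C_5$, and the $2$-switch/disjoint-union witnesses for the remaining degrees) is a correct fleshing-out of the step the paper leaves to the cited reference.
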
 
 
 Arvind et al.~obtained the above lemma by noting that the $G[X]$'s and the $G[X,Y]$'s have to be regular unigraphs and biregular unigraphs respectively. Otherwise, they can be replaced by other regular or biregular graphs to create a new graph $H$,  and CR will not be able to distinguish between $G$ and $H$.

  To understand the global structure of an amenable graph $G$, we consider the {\it cell graph} $C(G)$ of $G$.  It is a complete graph whose vertex set is $\mathcal{P}_G$; i.e., the vertices are the cells of $\mathcal{P}_G$.   A vertex $X$ of $C(G)$ is {\it homogeneous} if $G[X]$ is an empty graph or a complete graph; otherwise, it is {\it heterogeneous}.  An edge $XY$ of $C(G)$ is {\it isotropic} if $G[X,Y]$ is an empty graph or a complete graph; otherwise, it is {\it anisotropic}.  
  
  It turns out that what matters in $C(G)$ are the anisotropic edges.  Starting with $C(G)$, delete all the isotropic edges.  The resulting connected components are called the  {\it anisotropic components} of $C(G)$.  
  The next lemma notes that each anisotropic component is a tree with some nice properties.


\begin{lemma} (\cite{Arvind2017GraphIsomorphism}, Lemma 8)
Let $G$ be an amenable graph.  Then for any anisotropic component $A$ of $C(G)$, the following is true.

\noindent (C) $A$ is a tree.  Furthermore, if $A$ is rooted at a cell whose cardinality is the least, then for any edge $XY$ where $X$ is the parent of $Y$,  $|X| \leq |Y|$.  That is, when a path is traversed from the root down the tree, the sizes of the cells are monotonically non-decreasing. 

\noindent (D) $A$ contains at most one heterogeneous vertex, and this vertex has the least cardinality among the cells in $A$.  
\label{global}
\end{lemma}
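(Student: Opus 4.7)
By Lemma~\ref{local}(B), every anisotropic edge $XY$ has $G[X,Y]$ equal to $sK_{1,t}$ or its bipartite complement with $s \geq 2$ and $t \geq 1$, so the two cell sizes lie in $\{s, st\}$. I will call the size-$s$ side of such an edge its \emph{centers} side and the size-$st$ side its \emph{leaves} side, noting that each leaf vertex is tagged by a unique centers vertex (its unique neighbor in the star case, or its unique non-neighbor in the co-star case). This already gives the single-edge statement $|\text{centers}| \leq |\text{leaves}|$ that the monotonicity in (C) is aiming at.

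For the acyclicity half of (C), the plan is to contradict amenability: assume $A$ contains a cycle $X_0 X_1 \cdots X_{\ell-1} X_0$ and build a graph $H \not\cong G$ with the same stable CR partition. On each cycle edge, the partition of the leaves side into stars can be re-permuted without changing any vertex's neighbor count in any cell; composing such permutations nontrivially around the cycle produces an $H$ that differs from $G$ only in which stars attach to which, while CR still returns the same partition. A suitable derangement-style choice gives $H \not\cong G$, contradicting amenability. The main technical obstacle is certifying non-isomorphism rigorously; this will likely require a separate case split for $t_i = 1$ (matching) edges and $t_i \geq 2$ (honest star) edges, and picking a rerouting that breaks an isomorphism invariant such as a specific orbit size or a specific closed-walk count.

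For the monotonicity half of (C), root $A$ at a cell $R$ of minimum size and induct on tree depth. The base case follows from the minimality of $|R|$. For the inductive step, suppose parent $X$ of $Y$ already satisfies $|X| \leq |Y|$, so that $Y$ is the leaves side of $XY$ and each $y \in Y$ carries a unique label coming from $X$; suppose for contradiction that a child $Z$ of $Y$ has $|Z| < |Y|$. Then $Y$ is also the leaves side of $YZ$, so every $y$ picks up a second, independent unique label coming from $Z$. The two resulting partitions of $Y$ can then be recombined via a simultaneous star-rewiring on the two edges to produce a CR-indistinguishable but non-isomorphic graph, again contradicting amenability.

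For (D), each heterogeneous $G[X] \in \{C_5,\, rK_2,\, \overline{rK_2}\}$ admits a fixed-point-free involution (or a $5$-rotation, for $C_5$) of $X$. If $X$ were not of minimum size in $A$ then it would be the leaves side of some incident anisotropic edge, and the plan is to compose this internal symmetry of $G[X]$ with the star structure of that edge to produce yet another CR-twin of $G$ that is non-isomorphic, forcing $X$ to achieve the minimum cell size. For uniqueness, suppose $A$ contains two heterogeneous cells $X_1, X_2$; both then have minimum size, and monotonicity forces every cell on the tree-path between them to share that size, putting us in the all-$t_i = 1$ matching regime. Composing the internal symmetries of $X_1$ and $X_2$ with matching rewirings along the connecting path yields the final CR-indistinguishable but non-isomorphic graph. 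The recurring obstacle across all four arguments is the explicit verification of non-isomorphism of the rewired graph, with the equal-size $t = 1$ case being the most delicate.
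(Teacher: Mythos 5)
First, a point of comparison: the paper does not prove this lemma at all --- it is imported verbatim from Arvind et al.\ \cite{Arvind2017GraphIsomorphism} (their Lemma 8) --- so there is no in-paper argument to measure yours against. That said, your overall strategy is the right one and matches the source in spirit: for each violation of (C) or (D), rewire the star/matching structure to manufacture a graph $H$ with the same cell sizes and the same cell-to-cell degree data as $G$ but with $H \not\cong G$, so that color refinement cannot separate the two. Your triangle-of-matchings instance of the cycle argument recovers the classical $C_6$ versus $2C_3$ example, and your preliminary observation that an anisotropic edge forces $s \ge 2$, $t \ge 1$, sizes $\{s, st\}$, and a unique center-tag on each leaf is correct and is the right foundation for everything else.

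As a proof, however, the proposal has genuine gaps, and not only the ones you flag. (i) In every branch the actual mathematical content --- certifying $H \not\cong G$ --- is deferred. Since ``rewire and hope'' can easily yield an isomorphic graph (any rewiring of a single star edge hanging off a tree is absorbed by relabelling), you must exhibit and compute the invariant that separates $G$ from $H$ in each case: the conjugacy class of the permutation obtained by composing the matchings around a cycle, the intersection pattern of two fibrations of a common cell, adjacency versus non-adjacency of the two leaves above a common center, and so on. (ii) The inductive step of your monotonicity argument breaks when the parent edge $XY$ is a matching, $|X| = |Y|$: the $X$-fibration of $Y$ is then into singletons, the ``two independent labels'' on $Y$ carry no joint information, and the proposed simultaneous rewiring at $Y$ produces a graph isomorphic to $G$. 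You need to walk up from $Y$ to the nearest strict size increase (which exists because the root has size at most $|Z| < |Y|$), transport that coarse fibration down the intervening chain of matchings to $Y$, and only then play it against the $Z$-fibration; the same repair is needed in your argument for (D) when the heterogeneous cell meets the rest of $A$ only through equal-size matching edges. (iii) You should also record the routine check that every rewiring preserves all the counts $d_{ij}$, hence that the transported partition is equitable for $H$; this is what actually licenses the claim that ``CR still returns the same partition.'' None of these is fatal, but together they mean the write-up is a plan rather than a proof.
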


Here now is Arvind et al.'s  characterization of amenable graphs.

\begin{theorem} (\cite{Arvind2017GraphIsomorphism}, Theorem 9)
A graph $G$ is amenable if and only if $G$ satisfies conditions (A), (B), (C) and (D).  Consequently, amenable graphs can be recognized in $O((n+m)\log n)$ time where $n = |V(G)|$ and $m = |E(G)|$. 
\label{mainchar}
\end{theorem}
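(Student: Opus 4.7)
The plan is to prove the two directions separately. For the forward direction, conditions (A) and (B) are exactly the content of Lemma~\ref{local}, and conditions (C) and (D) are the content of Lemma~\ref{global}; so if $G$ is amenable, all four properties hold. The recognition algorithm piggybacks on CR itself, which runs in $O((n+m)\log n)$ time. The real work is the backward direction: if (A)--(D) hold then $G$ is amenable.

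So suppose $G$ satisfies (A)--(D) and $H$ is an arbitrary graph that CR fails to separate from $G$, meaning $\mathcal{P}_{G \cup H}$ has $|V(G) \cap Z| = |V(H) \cap Z|$ in every cell $Z$. Restricting the stable partition to each side yields stable partitions $\mathcal{P}_G$ and $\mathcal{P}_H$ in natural correspondence $X \leftrightarrow X'$. My goal is to build an explicit isomorphism $\pi \colon G \to H$ cell by cell. For each correspondence $X \leftrightarrow X'$, condition (A) forces $G[X]$ and $H[X']$ to both sit in the short list of regular unigraphs (empty, complete, $5$-cycle, matching, or its complement), each of which is pinned down by its vertex count and intra-cell degree; since both data are preserved by CR, $G[X] \cong H[X']$. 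Similarly, for each pair of corresponding cells, condition (B) forces $G[X,Y] \cong H[X',Y']$ because each admissible shape is a biregular unigraph pinned down by the CR-invariant degrees; isotropic cell-pairs impose no constraint beyond matching edge count.

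The crux is stitching these local isomorphisms together. Here I use (C) and (D). Each anisotropic component $A$ of $C(G)$ is a tree; I root it at its smallest cell, which by (D) is the unique heterogeneous vertex if one exists. I then extend a bijection outward from the root: once the bijection on a parent cell is fixed, the biregular-unigraph form of the bipartite graph to a child (one of the four shapes in (B)) determines, up to automorphisms of the child's internal graph that fix the parent-to-child edges, a compatible bijection on the child cell. The tree property in (C) prevents any cyclic consistency constraint from arising, and the placement of the (at most one) heterogeneous cell at the root in (D) ensures that when we descend into a cell we need only worry about the action of its own regular-unigraph automorphism group, which is transitive enough to absorb the remaining ambiguity. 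Different anisotropic components are joined only by isotropic edges, so their bijections can be chosen independently. Concatenating gives $\pi \colon G \to H$.

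For the algorithmic claim, after running CR in $O((n+m)\log n)$ time, property (A) can be checked per cell by counting internal edges and matching to the five allowed patterns; property (B) can be checked per edge of $C(G)$ by counting crossing edges; (C) and (D) follow from a BFS on the anisotropic-edge subgraph of $C(G)$, together with a scan of internal edges to identify the (at most one) heterogeneous vertex per component. All of this contributes $O(n+m)$, so the total is $O((n+m)\log n)$. I expect the main obstacle to be the inductive step in the gluing argument: for each combination of an admissible biregular shape from (B) with an admissible regular shape from (A) on the child cell, I must verify that the automorphisms of the child's internal graph act transitively on the choices left open by the parent-to-child constraint. This is a finite case analysis, but it is the only place where the restricted form of the lists in (A) and (B) is actually used, and it must be done carefully.
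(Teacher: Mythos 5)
The paper does not actually prove this theorem: it is imported verbatim from Arvind et al.\ (Theorem~9 of \cite{Arvind2017GraphIsomorphism}), with the necessity direction packaged as the likewise-cited Lemmas~\ref{local} and~\ref{global}. So there is no in-paper argument to compare against, and your proposal must be judged as a reconstruction of the original proof. Your forward direction and your complexity accounting are fine, and your plan for sufficiency --- build an isomorphism $\pi\colon G\to H$ cell by cell, using that each admissible $G[X]$ and $G[X,Y]$ is a (bi)regular unigraph determined by CR-invariant counts, then glue the local isomorphisms down the rooted anisotropic trees --- is essentially the strategy of the original proof.

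The genuine gap is that the step you yourself flag as ``the crux'' is asserted rather than carried out. When you descend from a parent cell $X$ to a child cell $Y$ along an anisotropic edge, you must exhibit a bijection $Y\to Y'$ that simultaneously (a) maps the leaf-block of each star center $x\in X$ onto the leaf-block of $\pi(x)$, (b) is an isomorphism of $G[Y]$ onto $H[Y']$, and (c) remains extendable to all of $Y$'s own children and compatible with every isotropic edge leaving $Y$. This is precisely where (A)--(D) earn their keep: (D) makes every non-root cell homogeneous so that (b) costs nothing, (C) with the monotone cell sizes makes every $Y$-to-child bipartite graph again a star union with centers in $Y$ so that any block-respecting choice satisfies (c), and isotropic edges impose no constraint at all. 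You name this as ``a finite case analysis \ldots done carefully'' but never perform it --- including the complement cases in (A) and (B), the $5$-cycle root, and the matching/co-matching root. Until that verification is written out, the sufficiency direction is an outline of the known proof rather than a proof.
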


Consider graph $G$ in Figure \ref{ExampleFig}.  We show its cell graph $C(G)$ in Figure \ref{ExampleFig2}.  The induced graphs $G[X_i]$ for $i = 1, 3, 4$ are empty graphs while $G[X_2]$ is $2K_2$ and the only heterogenous cell.   As for $G[X_i, X_j]$'s, with the exception of $G[X_3, X_4]$, all the induced bipartite graphs are empty or complete bipartite graphs.  On the other hand, $G[X_3, X_4]$ is $2K_{1,2}$ and is the only anisotropic edge.  Thus, $G$ satisfies conditions (A) and (B).  

Once the isotropic edges in $C(G)$ are removed, there are three components left:  $X_1$ by itself, $X_2$ by itself and $X_3$ and $X_4$ together.    Since the first two components contain only one cell, they satisfy conditions (C) and (D) trivially.  For the third component, $|X_4| < |X_3|$ so the component should be rooted at $X_4$.  There is only one path in the component and the cell sizes are monotonically non-decreasing. Furthermore, the component has no heterogeneous vertices.  So the third component also satisfies conditions (C) and (D).   From Theorem \ref{mainchar}, $G$ is an amenable graph.

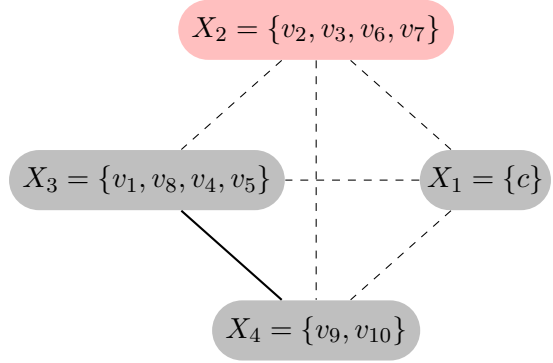
\begin{figure}

\centering
\begin{tikzpicture}[
  every node/.style={draw=none, rounded rectangle, minimum width=2cm, minimum height=0.8cm},
  level distance=1.5cm,
  sibling distance=2cm
  ]

\node[fill=lightgray] (X1) at (4.5,0) {$X_1 = \{c\}$};
\node[fill=pink] (X2) at (2.25,2) {$X_2 = \{v_2, v_3, v_6, v_7\}$};
\node[fill=lightgray] (X3) at (0,0) {$X_3 = \{v_1, v_8, v_4, v_5\} $};
\node[fill=lightgray] (X4) at (2.25,-2) {$X_4 = \{v_9, v_{10}\}$};

 \draw[dashed] (X1) -- (X2);
\draw[dashed] (X1) -- (X3);
\draw[dashed] (X1) -- (X4);
\draw[dashed] (X2) -- (X3);
\draw[dashed] (X2) -- (X4);
\draw[thick] (X3) -- (X4);
 \end{tikzpicture}
 \caption{The cell graph $C(G)$ of $G$ in Figure \ref{ExampleFig}.  The cells in gray and pink are homogeneous and heterogeneous cells respectively.  The dashed and thick lines are isotropic and anisotropic edges respectively. Thus, $C(G)$ has three anisotropic components: $X_1$ by itself, $X_2$ by itself and $X_3$ and $X_4$ together.}
 \label{ExampleFig2}
\end{figure}

\subsection{Distinguishing Labelings}
\label{sec:prelim}



Let $\phi:V(G) \rightarrow \{1, 2, \hdots, c\}$ be a  vertex labeling of $G$ that uses $c$ colors and denote the labeled graph as $(G, \phi)$.  We say that $\phi$ is a {\it distinguishing labeling} if, for every nontrivial automorphism $\pi$ of $G$,  there is some vertex $v$ so that $\phi(v) \neq \phi(\pi(v))$.  In other words, $\phi$ has {\it broken} all the non-trivial automorphisms of $G$ so that $Aut(G, \phi) = \{Id\}$. 
The {\it distinguishing number of $G$}, $D(G)$, is the fewest number of colors needed to create a distinguishing labeling of $G$.


One surprisingly useful method for computing distinguishing numbers is counting.  Let $\phi$ and $\phi'$ be two distinguishing labelings of $G$.  They are {\it equivalent} when there is some $\pi \in Aut(G)$ so that  $\phi(v) = \phi'(\pi(v))$ for each vertex $v$ of $G$; otherwise, they are {\it inequivalent}.  That is,  $\phi$ and $\phi'$ are equivalent if and only if $(G, \phi)$ can be made to look like $(G, \phi')$.  Given $c$ colors, let $D(G, c)$ denote the number of pairwise inequivalent distinguishing labelings of $G$ that use at most  $c$ colors.


\begin{proposition}
Let $F = rG$.   Then $D(F) = \min \{c: D(G,c) \ge r\}$.  
\label{propcount}
\end{proposition}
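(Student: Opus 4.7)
The plan is to characterize precisely when a vertex labeling $\phi$ of $F = rG$ is distinguishing, in terms of the induced labelings on the $r$ copies of $G$. Writing $G_1, G_2, \ldots, G_r$ for the $r$ copies of $G$ inside $F$ and $\phi_i := \phi\rvert_{V(G_i)}$, viewed as a labeling of $G$ through the canonical identification, I will establish the following claim: $\phi$ is a distinguishing labeling of $F$ if and only if (i) each $\phi_i$ is a distinguishing labeling of $G$, and (ii) the labelings $\phi_1, \ldots, \phi_r$ are pairwise inequivalent. The proposition then follows immediately: the existence of a distinguishing labeling of $F$ using at most $c$ colors becomes the existence of $r$ pairwise inequivalent distinguishing labelings of $G$ using at most $c$ colors, i.e.\ $D(G,c) \ge r$, and taking the smallest such $c$ gives $D(F) = \min\{c : D(G,c) \ge r\}$.

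For the forward direction of the claim I argue by contrapositive. If some $\phi_i$ is not a distinguishing labeling of $G$, then there is a non-trivial $\sigma \in Aut(G)$ preserving $\phi_i$; extending $\sigma$ by the identity on the other $r-1$ copies yields a non-trivial color-preserving automorphism of $F$, so $\phi$ is not distinguishing. Similarly, if $\phi_i$ and $\phi_j$ are equivalent via some $\sigma \in Aut(G)$ with $i \ne j$, then the map that swaps $G_i$ with $G_j$ through $\sigma$ and $\sigma^{-1}$ and fixes every other copy pointwise is a non-trivial color-preserving automorphism of $F$.

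For the reverse direction, assume (i) and (ii) and let $\pi$ be any color-preserving automorphism of $(F, \phi)$. Every automorphism of $F = rG$ permutes the $r$ copies as blocks, so if $\pi$ sends $G_i$ to $G_j$, then $\pi\rvert_{V(G_i)}$ is an isomorphism from $(G_i,\phi_i)$ to $(G_j,\phi_j)$, making $\phi_i$ and $\phi_j$ equivalent. By (ii) this forces $i=j$, so $\pi$ stabilizes each $G_t$ setwise. Then each $\pi\rvert_{V(G_t)}$ is a color-preserving automorphism of $(G,\phi_t)$, which by (i) must be the identity, so $\pi = Id$.

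The main delicate point is the first sentence of the reverse direction: why does every automorphism of $F$ permute the $r$ copies as blocks? This is immediate when $G$ is connected, since then the $G_t$'s are exactly the connected components of $F$ and any automorphism must permute the components. In the intended setting (the jellyfish/anisotropic-component building blocks used later in the paper are connected), this assumption is in force, and it is the one fact that should be stated explicitly. Once this is granted, everything else is a mechanical translation between $Aut(F)$ acting on labelings of $F$ and $Aut(G)$ acting on the restrictions $\phi_i$, and the minimization in the proposition pops out.
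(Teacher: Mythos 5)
Your proof is correct and is exactly the argument the paper leaves implicit (the proposition is stated without proof, with only the $rK_2$ example following it). The one point you flag --- that automorphisms of $F=rG$ must permute the $r$ copies as blocks, which requires $G$ to be connected --- is a genuine caveat and not mere pedantry: for disconnected $G$ the statement as written is false. For example, take $G = K_1 \cup K_2$ and $r=2$, so $F = 2K_1 \cup 2K_2$. Then $D(G,c) = c\binom{c}{2}$, so $\min\{c : D(G,c)\ge 2\} = 2$; but with two colors both copies of $K_2$ in $F$ must receive the same unordered pair of colors and can be swapped by an automorphism of $F$ that does not respect the copy decomposition, so $D(F)=3$. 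In the paper's applications $G$ is always connected ($K_2$, rooted trees, legs of jellyfish graphs), so the proposition is used correctly, but your proof makes explicit the hypothesis under which it holds.
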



Consider $F = rK_2$.   Two colors are needed to create a distinguishing labeling $\phi$ for $K_2$.  Another distinguishing labeling $\phi'$ of $K_2$ is inequivalent to $\phi$ if and only if they use different pairs of colors. Thus, $D(K_2,c) = \binom{c}{2}$ and $D(F) = \min \{c: \binom{c}{2} \ge r\}$. 




But there is still the question of how $D(G,c)$ can be computed in general.  When $T$ is a rooted tree, $D(T,c)$ can be computed recursively as shown in the next theorem.   Any automorphism of $T$ maps the root $r(T)$ to itself so what matters are the subtrees   rooted at the children of $r(T)$.


\begin{theorem} \cite{ArDe04, Ch06}
Let $T$ be a rooted tree and $r(T)$ be its root.  Let $\mathcal{T}$ contain the subtrees rooted at the children of $r(T)$.  Suppose $\mathcal{T}$ has exactly $g$ distinct isomorphism classes and the $j$th isomorphism class has $m_j$ copies of the rooted tree $T_j$; 
i.e., $\mathcal{T} = m_1 T_{1} \cup m_2 T_{2} \cup \cdots \cup m_g T_{g}.$  Then 
$$D(T,c) = c \prod_{j = 1}^g \binom{D(T_{j},c)}{m_j}$$
and 
$$D(T) = \min\{c: D(T,c) > 0\}.$$  

\label{disttree}
\end{theorem}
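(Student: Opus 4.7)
The plan is to prove the recursion for $D(T,c)$ by induction on the height of the rooted tree $T$, with the characterization of $D(T)$ following as an immediate corollary. For the base case, $T$ is a single vertex, so $g=0$, the empty product equals $1$, and the formula yields $D(T,c)=c$. This matches the fact that each of the $c$ colors on the single vertex gives a distinguishing labeling and distinct colors produce inequivalent labelings.

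For the inductive step, the key structural fact is that every automorphism of the rooted tree $T$ fixes $r(T)$ and permutes its children, and two children lie in the same orbit only when the subtrees rooted at them are isomorphic as rooted trees. Consequently, $\mathrm{Aut}(T) \cong \prod_{j=1}^g \bigl(\mathrm{Aut}(T_j) \wr S_{m_j}\bigr)$, with each wreath-product factor acting independently on its isomorphism class. I then argue that a labeling $\phi$ of $T$ is distinguishing if and only if (i) the restriction of $\phi$ to each copy of $T_j$ is itself a distinguishing labeling of $T_j$, and (ii) for each $j$, the $m_j$ restrictions to the copies of $T_j$ are pairwise inequivalent distinguishing labelings of $T_j$. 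Necessity of (i) is clear, since any inner automorphism of a single copy of $T_j$ extends to a nontrivial automorphism of $T$ fixing colors elsewhere; necessity of (ii) follows because two equivalently labeled copies of $T_j$ can be swapped via a matching element of $\mathrm{Aut}(T_j)$ inside each. Sufficiency uses the wreath-product structure: any automorphism fixing $\phi$ must fix each copy of $T_j$ setwise by (ii), and then act trivially within each copy by (i).

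To count equivalence classes of distinguishing labelings, I set up a bijection: specify the color of $r(T)$, giving $c$ choices that produce genuinely inequivalent labelings because the root is pointwise fixed by every automorphism; and, for each $j$, specify an unordered $m_j$-subset of the $D(T_j,c)$ equivalence classes of distinguishing labelings of $T_j$ to distribute among the $m_j$ copies. By condition (ii) the chosen labelings must be distinct, yielding $\binom{D(T_j,c)}{m_j}$ choices, with the ordering washed out by the $S_{m_j}$ action. Multiplying gives the claimed formula. The main obstacle will be verifying that this bijection is well-defined and injective, in particular ruling out any hidden equivalence between different choices produced by combining automorphisms across different isomorphism classes; this is exactly what the product structure of $\mathrm{Aut}(T)$ buys us, forcing equivalence of labelings of $T$ to decompose into independent equivalences within each class and an equality of root colors. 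Finally, $D(T)=\min\{c:D(T,c)>0\}$ is immediate from the definitions, since $D(T,c)>0$ precisely when some distinguishing labeling of $T$ using at most $c$ colors exists.
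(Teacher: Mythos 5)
Your proof is correct. The paper itself cites this theorem from \cite{ArDe04, Ch06} without reproducing a proof, but your argument --- the wreath-product decomposition of $Aut(T)$, the characterization of distinguishing labelings as ``distinguishing on each subtree and pairwise inequivalent across isomorphic siblings,'' and the resulting count of $c$ root colors times $\binom{D(T_j,c)}{m_j}$ unordered choices per class --- is exactly the standard argument from those sources, and it parallels the paper's own proofs of Lemma~\ref{lemmajellyfish} and Theorem~\ref{distjellyfish} for jellyfish graphs.
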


Cheng \cite{Ch06} and Arvind and Devanur \cite{ArDe04} showed that computing $D(T,c)$ for a fixed $c$ can be done in $O(n)$ time.  By doing binary search over the range $[1, n]$,  $D(T)$ can be computed in $O(n \log n)$ time.  Seara et al.~\cite{distinguishing2012} improved the running time further to $O(n)$.

Later on,  we will encounter graphs that we informally call  {\it jellyfish graphs}.  Every jellyfish graph $J$ can be described using two graphs $H$ and $L$. 
  The {\it head of $J$} is $H$, and $H$ contains a Hamiltonian cycle.  Rooted at each vertex  of $H$ is a {\it leg} that is isomorphic to $L$,  a rooted tree. We shall refer to the particular leg rooted at vertex $v$  as $L_v$.  No two legs share a vertex.   A consequence of a jellyfish graph's  structure is that every automorphism of $J$ maps the head to itself and a leg to itself or another leg of $J$.   See Figure \ref{fig:jellyfish} for an example.

\begin{figure}
\centering
\begin{tikzpicture}[
  outer/.style={draw=none, fill=pink, rounded rectangle, minimum width=2.4cm, minimum height=0.65cm},
  inner/.style={circle, fill=black, inner sep=3pt},
  every path/.style={thick}
]

\node[outer, minimum width=11cm, minimum height=1.5cm] (root) at (0,0) {};

\node[inner] (r1) at ($(root)+(0,-0.3)$) {};
\node[inner] (r2) at ($(root)+(2,-0.3)$) {};
\node[inner] (r3) at ($(root)+(4,-0.3)$) {};
\node[inner] (r4) at ($(root)+(-2,-0.3)$) {};
\node[inner] (r5) at ($(root)+(-4,-0.3)$) {};

\foreach \i in {1, 2, 3, 4, 5}{
	\node[inner] (c\i1) at ($(r\i)+(0.5,-1)$) {};
	\node[inner] (c\i2) at ($(r\i)+(-0.5,-1)$) {};
	\node[inner] (c\i3) at ($(c\i2)+(-0.5,-1)$) {};
	\node[inner] (c\i4) at ($(c\i2)+(0.5,-1)$) {};
	\draw (r\i) -- (c\i1);
	\draw (r\i) -- (c\i2);
	\draw (c\i2) -- (c\i3);
	\draw (c\i2) -- (c\i4);
}

 \draw (r1) -- (r2);
\draw (r2) -- (r3);
\draw (r5) -- (r4);
\draw (r1) -- (r4);
\draw (r5) to [bend left =20] (r3);
\end{tikzpicture}
\caption{ An example of a jellyfish graph.  The shaded portion is the head.  Rooted at each vertex of the head is a leg.  The legs are pairwise isomorphic and disjoint.}
\label{fig:jellyfish}
\end{figure}
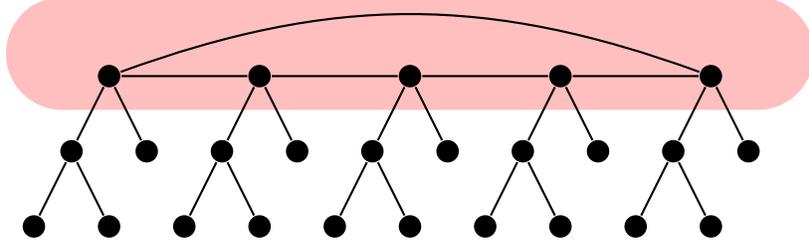

Let $\phi$ be a labeling of the jellyfish graph $J$.   Notice that we can specify $\phi$ by  the labelings it assigns to the legs of $J$.  For $v \in V(H)$, let $\phi_v$ be the labeling of $L_v$ when $\phi$ is restricted to its vertices. 
The leg labelings in turn induce a labeling $\phi_{proj}$ on $H$ as follows:  for any $v, w \in V(H)$, let $\phi_{proj}(v) = \phi_{proj}(w)$ if and only if the labeled leg $(L_v, \phi_v)$ is isomorphic to the labeled leg $(L_w, \phi_w)$ or, equivalently, there is an isomorphism from $L_v$ to $L_w$ that preserves the labels of $\phi$.  We call $\phi_{proj}$  the {\it projection of $\phi$ onto $H$}.\footnote{Technically, there can be many projections of $\phi$ onto $H$ because we do not specify the colors used by $\phi_{proj}$.}
To determine if $\phi$ is a distinguishing labeling of $J$,  we have to examine its behavior on each leg $L_v$ and the head $H$.   

\begin{lemma}
Let $J$ be a jellyfish graph with head $H$ and legs isomorphic to the tree $L$.   A labeling $\phi$ of $J$ is distinguishing if and only if (i) $\phi_v$ is a distinguishing labeling of $L_v$ for each $v \in V(H)$ and (ii) $\phi_{proj}$ is a distinguishing labeling of $H$. 
\label{lemmajellyfish}
\end{lemma}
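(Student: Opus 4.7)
The plan is to prove the biconditional by exploiting the structural fact, already noted in the setup, that every automorphism of $J$ maps $H$ to itself and permutes the legs as rooted trees (roots going to roots). This reduces the analysis on $J$ to the analysis on $H$ together with the analysis on each leg.

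For the forward direction I would argue by contrapositive. Suppose some $\phi_v$ is not a distinguishing labeling of the rooted tree $L_v$; pick a non-trivial automorphism $\alpha$ of $L_v$ (necessarily fixing the root $v$) that preserves $\phi_v$. Since $V(J)$ partitions into $V(H)$ together with the sets $V(L_w) \setminus \{w\}$ for $w \in V(H)$, and every edge of $J$ incident to $V(L_v) \setminus \{v\}$ lies inside $L_v$, extending $\alpha$ by the identity on $V(J) \setminus V(L_v)$ yields a non-trivial automorphism of $J$ preserving $\phi$, contradicting (i). Similarly, if $\phi_{proj}$ is not a distinguishing labeling of $H$, take a non-trivial $\tau \in Aut(H)$ preserving $\phi_{proj}$. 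By definition of $\phi_{proj}$, for each $v \in V(H)$ there is a rooted-tree isomorphism $\rho_v \colon L_v \to L_{\tau(v)}$ preserving $\phi$-labels. Since $\rho_v(v) = \tau(v)$, the maps $\tau$ and the $\rho_v$'s assemble into a well-defined bijection of $V(J)$ that preserves every edge (edges inside $H$ are handled by $\tau$, edges inside each $L_v$ by $\rho_v$, and there are no other edges because legs meet $H$ only at their roots). This is a non-trivial $\phi$-preserving automorphism of $J$, again contradicting (ii).

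For the converse, assume (i) and (ii) and let $\sigma \in Aut(J, \phi)$. The jellyfish structure forces $\sigma(H) = H$ and $\sigma(L_v) = L_{\tau(v)}$ for some permutation $\tau$ of $V(H)$, and $\tau = \sigma|_{V(H)}$ is an automorphism of $H$. Because $\sigma$ preserves $\phi$, the restriction $\sigma|_{L_v}$ is a label-preserving isomorphism of rooted trees from $(L_v, \phi_v)$ to $(L_{\tau(v)}, \phi_{\tau(v)})$, so $\phi_{proj}(v) = \phi_{proj}(\tau(v))$ for every $v$. Hence $\tau$ preserves $\phi_{proj}$, and by (ii) we obtain $\tau = Id$. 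Consequently each $\sigma|_{L_v}$ is an automorphism of the rooted tree $L_v$ preserving $\phi_v$, which by (i) is also the identity. Therefore $\sigma = Id$.

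I do not anticipate a serious obstacle. The only care needed is at the $H$-leg interface when assembling partial maps: one must verify that the proposed bijection is consistent at the attachment points $v \in V(H)$ and that it preserves adjacency across that boundary. Both are immediate once one observes that rooted-tree automorphisms fix the root and that the only edges between $V(L_v) \setminus \{v\}$ and $V(J) \setminus V(L_v)$ pass through $v$ itself.
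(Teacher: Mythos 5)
Your proposal is correct and follows essentially the same route as the paper's proof: a contrapositive argument for the forward direction (extending a leg automorphism by the identity, or assembling $\tau$ with label-preserving leg isomorphisms), and for the converse, restricting an automorphism of $(J,\phi)$ first to $H$ to kill it via (ii) and then to each leg via (i). Your extra care at the root attachment points is a slightly more explicit version of what the paper leaves implicit.
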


\begin{proof}
Let  $\phi$ be a labeling of $J$.  
If $\phi_v$ is not a distinguishing labeling of $L_v$, then  $(L_v, \phi_v)$ has a non-trivial automorphism $\pi_v$.  This $\pi_v$ can be extended to a non-trivial automorphism $\pi$ of $(J, \phi)$ as follows: let $\pi(a) = \pi_v(a)$ for every vertex $a$ in $L_v$ and let $\pi(b) = b$ for every vertex $b$ not in $L_v$.   
Hence,  $\phi$ is not a distinguishing labeling of $J$.  


Suppose $\phi_{proj}$ is not a distinguishing labeling of $H$. Then $(H, \phi_{proj})$ has a non-trivial automorphism $\pi_H$.  This means that for every vertex $v$ of $H$, $\phi_{proj}(v) = \phi_{proj}(\pi_H(v))$.  Consequently,    there is a mapping $\tau_v$ from $L_v$ to $L_{\pi_H(v)}$ that preserves adjacencies and the labels of $\phi$.  When $\pi_H(v) \neq v$, the mapping $\tau_v$ is an isomorphism; otherwise, $\tau_v$ is an automorphism. Regardless, the set $\{\tau_v, v \in V(H)\}$ can be used to define an automorphism $\pi$ on $J$ that preserves the labels of $\phi$.  Since  $\pi_H$ is non-trivial,  $\pi$ is also non-trivial.  As a result, $\phi$ is not a distinguishing labeling of $J$.  We have now shown that when $\phi$ is a distinguishing labeling of $J$, conditions (i) and (ii) must hold.  

Consider the converse.  Assume $\phi$ is a labeling of $J$ that satisfies conditions (i) and (ii).  Let $\pi$ be an automorphism of the labeled graph $(J, \phi)$.  Clearly, $\pi$ maps $H$ to itself.  Additionally,  for each $v \in V(H)$, $(L_v, \phi_v)$ is isomorphic to $(L_{\pi(v)}, \phi_{\pi(v)})$. This implies that for each $v \in V(H)$, $\phi_{proj}(v) = \phi_{proj}(\pi(v))$. That is, $\pi$ when restricted to $H$ is an automorphism of $(H, \phi_{proj})$.  From condition (ii), this automorphism is trivial so $\pi(v) = v$ for each $v \in V(H)$.   

Now, if $\pi(v) = v$ for each $v \in V(H)$, then $\pi$ must map each leg $L_v$  to itself.  Another way of putting it is $\pi$ when restricted to  $L_v$ is an automorphism $\pi_v$ of $(L_v, \phi_v)$.  From condition (i), $\pi_v$ is  trivial. Since this property holds for each $v \in V(H)$, $\pi$ itself must be a trivial automorphism of $(J, \phi)$.   Thus, we have shown that when $\phi$ satisfies (i) and (ii),  $\phi$ is a distinguishing labeling of $J$.
\end{proof}

\begin{figure}
\centering
\begin{tikzpicture}[
  outer/.style={draw=none, fill=pink, rounded rectangle, minimum width=2.4cm, minimum height=0.65cm},
  inner/.style={circle, fill=black, inner sep=3pt},
  every path/.style={thick}
]

\node[outer, minimum width=11cm, minimum height=1.5cm] (root) at (0,0) {};

\node[inner] (r1) at ($(root)+(0,-0.3)$) {};
\node[inner] (r2) at ($(root)+(2,-0.3)$) {};
\node[inner] (r3) at ($(root)+(4,-0.3)$) {};
\node[inner] (r4) at ($(root)+(-2,-0.3)$) {};
\node[inner] (r5) at ($(root)+(-4,-0.3)$) {};



\foreach \i in {1, 2, 3, 4, 5}{
	\node[inner] (c\i1) at ($(r\i)+(0.5,-1)$) {};
	\node[inner] (c\i2) at ($(r\i)+(-0.5,-1)$) {};
	\node[inner] (c\i3) at ($(c\i2)+(-0.5,-1)$) {};
	\node[inner, fill=lightgray] (c\i4) at ($(c\i2)+(0.5,-1)$) {};
	\draw (r\i) -- (c\i1);
	\draw (r\i) -- (c\i2);
	\draw (c\i2) -- (c\i3);
	\draw (c\i2) -- (c\i4);
}

\node[inner, fill=lightgray] (c42) at ($(r4)+(-0.5,-1)$) {};
\node[inner, fill=lightgray] (c22) at ($(r2)+(-0.5,-1)$) {};
\node[inner, fill=lightgray] (c31) at ($(r3)+(+0.5,-1)$) {};

 \draw (r1) -- (r2);
\draw (r2) -- (r3);
\draw (r5) -- (r4);
\draw (r1) -- (r4);
\draw (r5) to [bend left =20] (r3);
\end{tikzpicture}
\caption{ A distinguishing labeling of the jellyfish graph in Figure \ref{fig:jellyfish} that uses two colors (gray and black).  Three inequivalent labelings are used on the five legs.  They in turn induce a labeling on the head, a $5$-cycle, that is distinguishing.}
\label{fig:jellyfish2}
\end{figure}
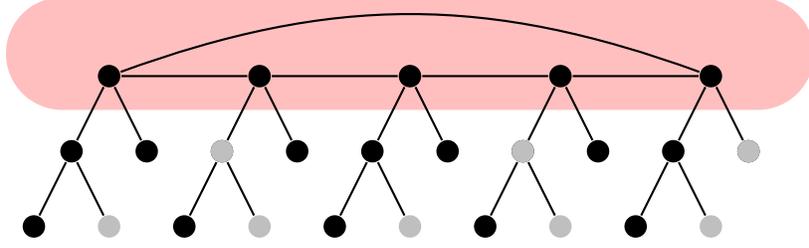

\begin{theorem}
\label{distjellyfish}
Let $J$ be a jellyfish graph with head $H$ and legs isomorphic to the tree $L$.   Let $D(H) = d^*$.  Then $D(J) = \min\{c: D(L, c) \ge d^* \}. $
\end{theorem}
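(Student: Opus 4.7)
The plan is to prove the identity by combining Lemma \ref{lemmajellyfish} with the counting interpretation of $D(L,c)$ used in Proposition \ref{propcount}. The key observation is that, given a labeling $\phi$ of $J$ with at most $c$ colors, the ``colors'' available to the projection $\phi_{\textit{proj}}$ on the head $H$ are precisely the equivalence classes of distinguishing labelings of $L$ that use at most $c$ colors, and there are $D(L,c)$ of these. So an upper bound on $D(J)$ should follow by lifting a distinguishing labeling of $H$ through a suitable palette of leg labelings, and a lower bound should follow because we cannot construct a distinguishing labeling of $H$ with fewer than $D(H) = d^*$ colors in its palette.

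For the lower bound I would argue as follows. Suppose $\phi$ is a distinguishing labeling of $J$ with $c$ colors. By Lemma \ref{lemmajellyfish}(i), each $\phi_v$ is a distinguishing labeling of $L_v$ using at most $c$ colors, so $\phi_{\textit{proj}}$ takes values in the set of equivalence classes of distinguishing labelings of $L$ with at most $c$ colors, a set of size $D(L,c)$. By Lemma \ref{lemmajellyfish}(ii), $\phi_{\textit{proj}}$ is a distinguishing labeling of $H$; hence $H$ admits a distinguishing labeling with at most $D(L,c)$ colors, forcing $D(L,c) \ge D(H) = d^*$.

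For the upper bound, suppose $c$ satisfies $D(L,c) \ge d^*$. Pick $d^*$ pairwise inequivalent distinguishing labelings $\psi_1, \dots, \psi_{d^*}$ of $L$, each using at most $c$ colors, and pick a distinguishing labeling $\chi : V(H) \to \{1, \dots, d^*\}$ of $H$, which exists since $D(H) = d^*$. Define $\phi$ on $J$ by applying $\psi_{\chi(v)}$ to the leg $L_v$ for every $v \in V(H)$, using the natural identification of $L_v$ with $L$ via its root $v$. Then each $\phi_v$ is distinguishing on $L_v$, and because the $\psi_i$ are pairwise inequivalent we have $\phi_{\textit{proj}} = \chi$, which is distinguishing on $H$. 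Lemma \ref{lemmajellyfish} then gives that $\phi$ is a distinguishing labeling of $J$ using at most $c$ colors.

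The main obstacle is being careful with the translation between labelings of $L$ and colors for $H$: we must verify that inequivalent $\psi_i$'s really do produce distinct values of $\phi_{\textit{proj}}$, and conversely that $\phi_{\textit{proj}}$ only ``sees'' the equivalence class of each $\phi_v$, not the labeling itself. Both are immediate from the definition of $\phi_{\textit{proj}}$, but they are the bridge between the counting quantity $D(L,c)$ and the combinatorial condition in Lemma \ref{lemmajellyfish}. Once this is set, the two bounds match and yield $D(J) = \min\{c : D(L,c) \ge d^*\}$.
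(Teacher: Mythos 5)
Your proposal is correct and follows essentially the same route as the paper: both directions rest on Lemma \ref{lemmajellyfish}, with the upper bound obtained by lifting a $d^*$-color distinguishing labeling of $H$ through $d^*$ pairwise inequivalent distinguishing labelings of $L$, and the lower bound obtained by observing that the projection $\phi_{proj}$ can use at most $D(L,c)$ ``colors'' (equivalence classes of leg labelings) yet must be distinguishing for $H$, forcing $D(L,c) \ge d^*$. Your write-up is if anything slightly more explicit than the paper's about why the projection's palette has size exactly $D(L,c)$.
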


\begin{proof}
We create a labeling $\phi$ of $J$ as follows:  Let $\phi'$ be a distinguishing labeling of $H$ that uses colors from the set $\{1, 2, \hdots, d^*\}$. 
  Denote as $V_i$ the set of vertices $v$ in $H$  so that $\phi'(v) = i$.  Find $d^*$ pairwise inequivalent distinguishing labelings of $L$ and denote them as $\psi_1, \psi_2, \hdots, \psi_{d^*}$.  Then for each $V_i$ and for each $v \in V_i$, label the leg $L_v$ with $\psi_i$.  By  Lemma \ref{lemmajellyfish}, $\phi$ is a distinguishing labeling of $J$.  The minimum number of colors needed to create $\phi$ is $\min\{c: D(L, c) \ge d^* \}$ so $D(J) \leq \min\{c: D(L, c) \ge d^* \}. $

On the other hand, according to Lemma \ref{lemmajellyfish},  every distinguishing $\phi$ of $J$ has to have a projection $\phi_{proj}$ that uses at least $d^*$ colors.  This means that the collection $\{\phi_v, v \in V(H)\}$ contains at least $d^*$ pairwise inequivalent distinguishing labelings of $L$. So $\phi$ has to use at least $\min\{c: D(L, c) \ge d^* \}$ colors. Hence, $D(J) \ge \min\{c: D(L, c) \ge d^* \}. $
\end{proof}

Figure \ref{fig:jellyfish2} shows an example of a $2$-distinguishing labeling of the jellyfish graph in Figure \ref{fig:jellyfish}.  The graph is clearly not rigid so its distinguishing number is $2$.  

\subsection{Fixing Sets}

A subset $S$ of $V(G)$ is a {\it fixing set} of $G$ (or $S$ {\it fixes} $G$) if the only  automorphism $\pi \in Aut(G)$ such that $\pi(s) = s$ for every $s \in S$ is the identity map.
 The {\it fixing number of $G$}, $Fix(G)$, is the size of a smallest fixing set.  We note that $Fix(G) = 0$ when $G$ is a rigid graph and $Fix(G) > 0$ otherwise.   It is easy to check that the next proposition is true:

\begin{proposition}
Let $S \subseteq V(G)$.  Let $\phi_S$ be the vertex labeling that assigns distinct colors to the vertices in $S$ and another color (the ``null" color) to the all the vertices not in $S$.  Then $S$ is a fixing set of $G$ if and only if $\phi_S$ is a distinguishing labeling of $G$.   Thus, $D(G) \leq Fix(G) + 1$.  
\label{proptranslate}
\end{proposition}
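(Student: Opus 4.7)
The plan is a direct equivalence argument based on how $\phi_S$ encodes $S$: the distinct colors on $S$ force an automorphism of $(G,\phi_S)$ to fix $S$ pointwise, while the uniform null color on $V(G)\setminus S$ imposes no further constraint beyond being a bijection on that set. Since both conditions (``$S$ fixes $G$'' and ``$\phi_S$ distinguishes $G$'') concern exactly which automorphisms of $G$ are killed, the two conditions will collapse onto the same set of automorphisms.

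First I would unpack the two directions. For the forward direction, assume $S$ is a fixing set and take any $\pi \in Aut(G,\phi_S)$. Because the colors assigned by $\phi_S$ to the vertices in $S$ are pairwise distinct, the requirement $\phi_S(\pi(s)) = \phi_S(s)$ for $s \in S$ forces $\pi(s) = s$. Hence $\pi$ fixes every vertex of $S$, and by hypothesis $\pi = Id$. For the converse, assume $\phi_S$ is a distinguishing labeling and take $\pi \in Aut(G)$ that fixes every $s \in S$ pointwise. I then need to verify $\pi$ is color-preserving on all of $V(G)$: on $S$ this is trivial; for $v \notin S$, since $\pi$ is a bijection and $\pi(S) = S$, it follows that $\pi(v) \in V(G)\setminus S$, so $\phi_S(\pi(v))$ is again the null color. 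Thus $\pi \in Aut(G,\phi_S)$, and by hypothesis $\pi = Id$.

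For the inequality $D(G) \leq Fix(G)+1$, I would simply take a minimum fixing set $S$ of size $Fix(G)$ and invoke the equivalence just proved: $\phi_S$ is a distinguishing labeling that uses $|S|+1$ colors ($|S|$ distinct colors on $S$, plus the null color). The edge case $S = \emptyset$ (i.e.\ $G$ rigid, $Fix(G)=0$) is consistent: $\phi_S$ then uses a single color and $D(G)=1$.

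There is no real obstacle here; the only point requiring a moment of care is the converse direction, where one must explicitly note that an automorphism pointwise-fixing $S$ restricts to a bijection on $V(G)\setminus S$ and therefore automatically respects the null color. Everything else is just bookkeeping.
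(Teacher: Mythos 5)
Your proof is correct and is exactly the argument the paper has in mind; the paper simply omits it, stating that the proposition ``is easy to check.'' The one step you rightly flag as needing care---that an automorphism fixing $S$ pointwise maps $V(G)\setminus S$ to itself and hence preserves the null color---is handled properly, and the edge cases for the inequality are consistent.
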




Viewing a fixing set as a distinguishing labeling, we shall say that $S \subseteq V(G)$ {\it breaks} an automorphism $\pi$ of $G$ if for some $v \in S$, $\pi(v) \neq v$.  When $S$ breaks every non-trivial automorphism of $G$, then $S$ is a fixing set of $G$.  

In spite of Proposition \ref{proptranslate},  $Fix(G)$ and $D(G)$ can be quite different. The main reason is that distinguishing labelings can reuse colors whereas fixing sets, when viewed as distinguishing labelings, do not reuse colors with the exception of the null color.  Thus, for many graphs, $D(G)$ is much smaller than $Fix(G)$.  On the other hand,  the same reason makes the computation of $Fix(G)$ easier than $D(G)$ as illustrated by the results below.  In \cite{DBLP:journals/dm/ErwinH06},  Erwin and Harary made the following important observation about fixing numbers. 

\begin{proposition}
\cite{DBLP:journals/dm/ErwinH06}  Let $A$ and $B$ contain the connected components of $G$ that are rigid and non-rigid respectively.  Let $k$ be the number of isomorphism classes in $A$.  Then $$Fix(G) = \sum_{Y \in B} Fix(Y) + |A| - k.$$
\label{propErwinHarary}
\end{proposition}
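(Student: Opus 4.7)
The plan is to prove this identity by matching lower and upper bounds. For $S \subseteq V(G)$ and a component $Y$ of $G$, write $S_Y := S \cap V(Y)$. The structural fact I will use throughout is that any two isomorphic rigid components admit exactly one isomorphism between them (since their automorphism groups are trivial), so a permutation of components respecting isomorphism type extends to automorphisms of $G$ that are uniquely determined on the rigid parts.

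For the lower bound, I would take an arbitrary fixing set $S$ of $G$ and partition its cost across components. First, for each non-rigid $Y \in B$, every non-trivial $\pi_Y \in Aut(Y)$ extends to a non-trivial automorphism of $G$ by the identity elsewhere, so $S$ must break this extension inside $V(Y)$; hence $S_Y$ fixes $Y$ and $|S_Y| \geq Fix(Y)$. Second, within each isomorphism class $\mathcal{C}$ of rigid components, at most one member $R$ can have $S_R = \emptyset$: if two copies were unmarked, the transposition swapping them via their unique isomorphism (and fixing everything else) would be a non-trivial automorphism of $G$ fixing $S$ pointwise. So each class contributes at least $|\mathcal{C}| - 1$ vertices, and summing over the $k$ classes gives the rigid contribution $|A| - k$.

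For the upper bound, I would explicitly construct a fixing set of the claimed size: in each $Y \in B$ include a minimum fixing set of $Y$, and in each isomorphism class of rigid components leave one representative unmarked and include a single arbitrary vertex from each of the remaining components. To verify that this $S$ fixes $G$, let $\pi \in Aut(G)$ with $\pi(s) = s$ for all $s \in S$. Every component that contains a marked vertex must map to itself, which pins every non-rigid component (since $Fix(Y) \geq 1$) and every marked rigid component; the lone unmarked representative of each rigid class is then forced to map to itself too, because all of its isomorphic siblings are already pinned. Restricted to each rigid component, $\pi$ is trivial by rigidity; restricted to each non-rigid $Y$, $\pi$ is an automorphism of $Y$ fixing $S_Y$, hence trivial. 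The only step requiring any care is the ``at most one unmarked representative per rigid class'' observation on both sides, which in both directions rests on the uniqueness of the isomorphism between two rigid components; the rest is a routine chase through the definitions.
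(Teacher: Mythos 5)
Your argument is correct and follows essentially the same route as the paper's own (informal) justification: decompose the cost of a fixing set component by component, observe that the restriction to each non-rigid component must itself be a fixing set, and that within each isomorphism class of rigid components at most one copy may be left unmarked. Your write-up is in fact more complete than the paper's sketch, since you also verify the matching upper bound by checking that the constructed set pins every component, but the underlying idea is identical.
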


Let $Y$ be a connected component of $G$.  The formula above stems from the fact that when $S$ is a fixing set of $G$, $S \cap V(Y)$ has to be a fixing set of $Y$ too.  When $Y$ is not rigid,  $|S \cap V(Y)| \ge 1$.  This is enough to guarantee that if there is another connected component $Y'$ isomorphic $Y$, 
$S$ also breaks automorphisms of $G$ that map $Y$ to $Y'$.  Thus, for the components in $B$, the number of vertices in $S$ is $ \sum_{Y \in B} Fix(Y)$.   However, when $Y$ is rigid,  it is possible for $|S \cap V(Y)|  = 0$.  If $Y'$ above exists and $|S \cap V(Y')|  = 0$ too, 
then $S$ has not broken all non-trivial automorphisms of $G$.  Thus, for the isomorphism class that contains $Y$,  all but one of the copies of $Y$ must have a vertex in $S$.   Since $A$ has $k$ isomorphism classes, the number of vertices in $S$ that are also in its components  is $|A| - k$.   We now apply Proposition \ref{propErwinHarary} to our setting.

\begin{proposition}
Let $F = rG$.  Then $Fix(F) = r-1$ if $G$ is a rigid graph and is equal to $r \times Fix(G)$ otherwise.  
\label{propcount2}
\end{proposition}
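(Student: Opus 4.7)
The plan is to obtain Proposition 2.8 as an immediate specialization of Proposition \ref{propErwinHarary} applied to the graph $F = rG$, splitting into the two cases according to whether $G$ is rigid.

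First I would observe that the connected components of $F$ are exactly $r$ pairwise isomorphic copies of $G$, all lying in the same isomorphism class. In the notation of Proposition \ref{propErwinHarary}, if $G$ is rigid then every component of $F$ is rigid, so $A$ consists of all $r$ copies and $B = \emptyset$; moreover all components are mutually isomorphic, so the number of isomorphism classes in $A$ is $k = 1$. Plugging into the formula gives
\[
Fix(F) \;=\; \sum_{Y \in B} Fix(Y) + |A| - k \;=\; 0 + r - 1 \;=\; r-1.
\]

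If instead $G$ is non-rigid, then every component of $F$ is non-rigid, so $A = \emptyset$ and $B$ contains all $r$ copies, each with $Fix(Y) = Fix(G)$. The formula then yields
\[
Fix(F) \;=\; \sum_{Y \in B} Fix(Y) + |A| - k \;=\; r \cdot Fix(G) + 0 - 0 \;=\; r \cdot Fix(G).
\]

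Since Proposition \ref{propErwinHarary} is already available from Erwin and Harary, there is no real obstacle; the proof is essentially a two-line case analysis. The only point worth a brief sanity check is that the $k = 1$ in the rigid case is correct (all $r$ rigid copies are isomorphic to $G$, hence to each other, giving a single isomorphism class), which matches the intuition that in the rigid case a fixing set must contain at least one vertex from all but one of the copies in order to prevent automorphisms of $F$ that permute the identical rigid components.
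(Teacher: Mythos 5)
Your proof is correct and is precisely the argument the paper intends: Proposition \ref{propcount2} appears immediately after the remark that Proposition \ref{propErwinHarary} is being ``applied to our setting,'' and your two-case computation is exactly that application. (Both you and the paper implicitly assume $G$ is connected, which matches how the proposition is used, e.g.\ for $rK_2$; for disconnected $G$ the components of $F$ are not $r$ copies of $G$ and the stated formula can fail.)
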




In the theorem below,  we need to differentiate the trees that are rigid from those that are not.  We use $\mathds{1}_E$ as the indicator variable for event $E$.


\begin{theorem} 
Let $T$ be a rooted tree and $r(T)$ be its root.  Let $\mathcal{T}$ contain the subtrees rooted at the children of $r(T)$.  Suppose $\mathcal{T}$ has exactly $g$ distinct isomorphism classes and the $j$th isomorphism class has $m_j$ copies of the rooted tree $T_j$;  i.e., $\mathcal{T} = m_1 T_{1} \cup m_2 T_{2} \cup \cdots \cup m_g T_{g}.$    Then
$$ Fix(T) = \sum_{j=1}^g  \left[(m_j -1) \cdot \mathds{1}_{(Fix(T_j) = 0)} + \left(m_j \times Fix(T_j)\right) \cdot  \mathds{1}_{(Fix(T_j) \neq 0)} \right]. $$

\label{fixtree}
\end{theorem}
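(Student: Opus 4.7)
The plan is to exploit the fact that every automorphism of the rooted tree $T$ must fix $r(T)$, and to analyze how such automorphisms act on the $g$ isomorphism classes of children-subtrees. Concretely, an automorphism $\pi \in \mathrm{Aut}(T)$ is determined by the data $(\sigma_j, \{\tau_j^{(i)}\}_{i=1}^{m_j})_{j=1}^g$, where $\sigma_j$ is a permutation of the $m_j$ copies of $T_j$ and $\tau_j^{(i)}$ is an isomorphism from $T_j^{(i)}$ to $T_j^{(\sigma_j(i))}$ (as rooted trees). Conversely, any such data extends to an automorphism of $T$. The crucial consequence is that the classes act independently: a set $S \subseteq V(T) \setminus \{r(T)\}$ fixes $T$ if and only if, for every $j$, its restriction $S_j := S \cap \bigcup_i V(T_j^{(i)})$ fixes the rooted subforest $m_j T_j$. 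So $Fix(T) = \sum_j Fix(m_j T_j)$, and it suffices to prove, for each $j$, that $Fix(m_j T_j)$ equals the bracketed quantity.

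For each fixed $j$, I would split into two cases, mirroring Proposition 2.4. If $T_j$ is rigid, then within any single copy no non-identity $\tau_j^{(i)}$ exists, so the only automorphisms to break are the permutations $\sigma_j \in \mathrm{Sym}(m_j)$ that shuffle the $m_j$ copies. Because every vertex of $S$ gets its own distinct color, any copy that contains a vertex of $S$ is distinguishable from any copy that is empty or has different marked vertices; hence $S_j$ breaks every non-trivial $\sigma_j$ iff at most one copy has $|S_j \cap V(T_j^{(i)})| = 0$. The minimum is attained by placing exactly one vertex in $m_j - 1$ of the copies, giving contribution $m_j - 1$.

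If $T_j$ is non-rigid, then for each copy $T_j^{(i)}$ the restriction $S_j \cap V(T_j^{(i)})$ must itself fix $T_j^{(i)}$, since otherwise a non-trivial automorphism of $T_j^{(i)}$ extended by the identity on the other copies gives a non-trivial automorphism of $T$ that $S$ does not break; this forces $|S_j \cap V(T_j^{(i)})| \geq Fix(T_j) \geq 1$. Because every copy then contains at least one marked vertex and all marked vertices have distinct colors, any non-trivial permutation $\sigma_j$ is automatically broken. Hence the minimum contribution is exactly $m_j \cdot Fix(T_j)$, achieved by picking a smallest fixing set in each copy.

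Summing the per-class contributions yields the claimed formula. The main subtlety — what will need the most careful justification — is the individualization argument in the rigid case: one must verify that a single arbitrarily chosen vertex per copy suffices to break all $\sigma_j$, which relies on the convention (from Proposition 2.6) that a fixing set assigns pairwise distinct colors, so marked copies cannot be confused with each other or with the at most one unmarked copy. Everything else is bookkeeping given the independence of the isomorphism classes established in the first step.
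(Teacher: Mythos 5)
Your proof is correct and follows essentially the same route the paper intends: the paper leaves Theorem \ref{fixtree} without an explicit proof, treating it as the rooted-forest analogue of Proposition \ref{propErwinHarary} applied to the subtrees hanging off the (necessarily fixed) root, with rigid and non-rigid isomorphism classes contributing $m_j-1$ and $m_j\cdot Fix(T_j)$ respectively. You simply re-derive that component formula inline, including the correct observation that a non-trivial permutation of copies moves every vertex of a moved copy and hence is broken by any marked vertex it contains.
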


Thus,  like $D(T)$,  $Fix(T)$ can be computed recursively but in a more direct way.  It does not involve counting inequivalent fixing sets.

We now extend Lemma \ref{lemmajellyfish} to fixing sets.  Let $J$ be a jellyfish graph and $S \subseteq V(J)$.  For $v \in V(H)$,  let $S_v$ contain all the vertices of $S$ that are part of the leg $L_v$ so $S =  \bigcup \{S_v, v \in V(H)\}$. The $S_v$'s then induce a  projection $S_{proj}$ on the head $H$  as follows:  $v \in S_{proj}$ if and only if $S_v \neq \emptyset$.  

\begin{lemma}
Let $J$ be a jellyfish graph with head $H$ and legs isomorphic to the tree $L$.  A subset $S$ of $V(J)$ is a fixing set of $J$ if and only if (i) $S_v$ is a fixing set of $L_v$ for each $v \in V(H)$ and (ii) $S_{proj}$ is a fixing set of $H$.  
\label{lemmajellyfish2}
\end{lemma}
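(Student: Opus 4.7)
The plan is to mirror the proof of Lemma \ref{lemmajellyfish} step by step, using the crucial observation that every automorphism of $J$ must preserve the head $H$ and must map each leg $L_v$ onto some leg $L_{\pi(v)}$ via a graph isomorphism, since the legs are the disjoint tree pieces hanging off $H$. Working from this, the two directions of the biconditional translate almost directly from labelings to fixing sets, with the only nontrivial adjustment being that in the fixing-set setting we think of a vertex's role as ``individualized vs.\ not'' rather than ``colored $i$ vs.\ $j$''.

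For the forward direction I will prove the contrapositive. If (i) fails, so some $S_v$ does not fix $L_v$, I take a non-trivial automorphism $\pi_v$ of $L_v$ fixing $S_v$ and extend by the identity on the rest of $J$; this extension is a non-trivial automorphism of $J$ that fixes $S$. If (ii) fails, I take a non-trivial $\pi_H \in Aut(H)$ that fixes $S_{proj}$ pointwise. For each vertex $v$ of $H$ with $\pi_H(v) = v$ (in particular every $v \in S_{proj}$) I set $\tau_v$ to be the identity on $L_v$; for each $v$ with $\pi_H(v) \neq v$ I note that $S_v = \emptyset$ and choose an arbitrary isomorphism $\tau_v : L_v \to L_{\pi_H(v)}$, which exists because all legs are isomorphic to $L$. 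Stitching these maps together with $\pi_H$ yields a non-trivial automorphism of $J$, and it fixes $S$ because the vertices of $S$ lie only inside legs $L_v$ with $v \in S_{proj}$, which are handled by the identity.

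For the converse, assume (i) and (ii) and let $\pi \in Aut(J)$ with $\pi(s) = s$ for all $s \in S$. The structural remark above gives a well-defined induced map $\pi_H \in Aut(H)$ with $\pi$ sending each $L_v$ isomorphically onto $L_{\pi_H(v)}$. For any $v \in S_{proj}$ pick $s \in S_v$; since $\pi(s) = s$ and $s$ lies in the unique leg $L_v$, we must have $\pi_H(v) = v$. Thus $\pi_H$ fixes $S_{proj}$ pointwise, and (ii) forces $\pi_H = Id$. Consequently $\pi$ preserves each leg $L_v$ setwise, and its restriction $\pi_v$ to $L_v$ is an automorphism of $L_v$ fixing $S_v$; applying (i) gives $\pi_v = Id$ for every $v$, so $\pi = Id$.

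I do not foresee a genuine obstacle: the only subtle point is the case $v \notin S_{proj}$ in the forward direction, where I must be sure that choosing the leg isomorphisms $\tau_v$ arbitrarily does not inadvertently move a vertex of $S$; this is immediate because $S_v = \emptyset$ precisely on the vertices that $\pi_H$ is allowed to move. Everything else is bookkeeping along the head/leg decomposition already used in Lemma \ref{lemmajellyfish}.
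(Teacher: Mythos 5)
Your proof is correct, but it takes a different (and more self-contained) route than the paper: the paper omits the proof entirely, observing that the statement follows by converting $S$ into the labeling $\phi_S$ of Proposition \ref{proptranslate} and invoking Lemma \ref{lemmajellyfish}, whereas you re-run the head/leg automorphism argument directly in the language of fixing sets. Both are sound. The paper's reduction is shorter but quietly relies on two small verifications that your direct argument makes explicit: that $(\phi_S)_v$ restricted to a leg is again a labeling of the form $\phi_{S_v}$ (so leg-distinguishing corresponds to leg-fixing), and that the projection $(\phi_S)_{proj}$ coincides with $\phi_{S_{proj}}$ on $H$ --- the latter because the colors of $\phi_S$ are pairwise distinct on $S$, so two legs carry isomorphic labeled copies iff both meet $S$ in the empty set. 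Your version pays for avoiding that translation by having to re-justify the stitching construction (that the $\tau_v$'s assemble into a bijection of $V(J)$ preserving adjacency, and that $\tau_v$ can be taken to be the identity exactly on the legs that contain vertices of $S$), which you do correctly; the key observation that $\pi_H(v)\neq v$ forces $S_v=\emptyset$ is exactly the point where the fixing-set setting is cleaner than the labeling one. Either approach is acceptable; yours has the minor virtue of not depending on the unstated lemma that $(\phi_S)_{proj}=\phi_{S_{proj}}$.
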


 We omit the proof as it can be obtained by converting $S$ into $\phi_S$ and applying Lemma \ref{lemmajellyfish}.

\begin{theorem}
\label{thmjellyfish2}
Let $J$ be a jellyfish graph with head $H$ and legs isomorphic to $L$.  When $L$ is a rigid graph, $Fix(J) = Fix(H)$; otherwise, $Fix(J) = |V(H)| \times Fix(L)$. 
 Equivalently, 
$$ Fix(J) =  Fix(H) \cdot \mathds{1}_{(Fix(L) = 0)} + |V(H)| \times Fix(L) \cdot \mathds{1}_{(Fix(L) \neq 0)}. $$ 
\end{theorem}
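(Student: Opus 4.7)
The plan is to prove both cases by applying Lemma \ref{lemmajellyfish2} directly, using the characterization $S$ is a fixing set of $J$ iff each $S_v$ is a fixing set of $L_v$ and $S_{proj}$ is a fixing set of $H$. The structure of the argument will be: in each case, I derive a matching lower and upper bound on $|S|$ over all fixing sets $S$ of $J$.

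First I would handle the case in which $L$ is rigid, so $Fix(L)=0$ and condition (i) of Lemma \ref{lemmajellyfish2} is satisfied by \emph{every} choice of $S_v\subseteq V(L_v)$ (including the empty set). Hence the only real constraint on $S$ is that $S_{proj}$ fix $H$. For the lower bound, note that $|S|=\sum_{v\in V(H)}|S_v|\ge |S_{proj}|\ge Fix(H)$. For the upper bound, take a minimum fixing set $S'$ of $H$ and define $S$ by picking exactly one vertex of $L_v$ for each $v\in S'$ and nothing from the other legs; then $S_{proj}=S'$ and each $S_v$ is (vacuously) a fixing set of the rigid tree $L_v$, so by Lemma \ref{lemmajellyfish2} this $S$ fixes $J$ and $|S|=Fix(H)$.

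Next I would handle the case $Fix(L)\ge 1$. Then condition (i) forces every $S_v$ to be a non-empty fixing set of $L_v$, so $|S_v|\ge Fix(L)\ge 1$ for every $v\in V(H)$, giving $|S|\ge |V(H)|\cdot Fix(L)$. For the upper bound, construct $S$ by taking a minimum fixing set of $L_v$ inside each leg. Each $S_v$ is a fixing set of $L_v$ by construction, and $S_{proj}=V(H)$, which is trivially a fixing set of $H$ since it fixes every vertex. Hence by Lemma \ref{lemmajellyfish2} this $S$ fixes $J$, and $|S|=|V(H)|\cdot Fix(L)$, matching the bound.

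There is no genuine obstacle here; the only subtlety to flag is verifying that in the rigid-leg case the $S_v$'s corresponding to $v\notin S_{proj}$ may legitimately be empty (which is allowed precisely because $Fix(L)=0$), and in the non-rigid-leg case that $S_{proj}=V(H)$ automatically fixes $H$ regardless of its structure. Together the two cases yield exactly the displayed formula
$$Fix(J)=Fix(H)\cdot\mathds{1}_{(Fix(L)=0)}+|V(H)|\cdot Fix(L)\cdot\mathds{1}_{(Fix(L)\neq 0)}.$$
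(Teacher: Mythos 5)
Your proposal is correct and follows essentially the same route as the paper: both arguments reduce everything to Lemma \ref{lemmajellyfish2} and establish matching lower and upper bounds, with the lower bounds $Fix(J)\ge Fix(H)$ (via $S_{proj}$) and $Fix(J)\ge |V(H)|\cdot Fix(L)$ (via the $S_v$'s) and the upper bounds given by the same explicit constructions. The only cosmetic difference is that you split into cases from the outset, whereas the paper derives both lower bounds uniformly and then selects the larger one in each case.
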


\begin{proof}
Let us first find a fixing set for $J$.  When $L$ is a rigid graph,  let $S$ be a fixing set of $H$.  Then $S$ is automatically a fixing set of $J$ by Lemma \ref{lemmajellyfish2}.  In this case, $J$ has a fixing set of size $Fix(H)$. When $L$ is not rigid,  let  $S_v$ be a fixing set of $L_v$ for each $v \in V(H)$.   Notice that  each $S_v$ is non-empty so $S_{proj} = V(H)$ and therefore a fixing set of $H$.  By Lemma \ref{lemmajellyfish2}, $S = \bigcup_{v \in V(H)} S_v$ is a fixing set of $J$.  This implies that $J$ has a fixing set of size $|V(H)| \times Fix(L)$.  


This time, we bound $Fix(J)$ from below. Suppose $S$ is a minimum-sized fixing set of $J$.  Each $S_v$ is also a fixing set of $L_v$ so $|S_v| \ge Fix(L)$.  Hence, $Fix(J)  \ge |V(H)| \times Fix(L)$.  By Lemma \ref{lemmajellyfish2}, $S_{proj}$ is also a fixing set of $H$.  But $v \in S_{proj}$ if and only if some vertex of $L_v$ is part of $S$.  Thus, $Fix(J) \ge Fix(H)$.   When $L$ is rigid, the second lower bound is larger because $Fix(L) = 0$.  Combining with the result from the previous paragraph, $Fix(J) = Fix(H)$.  When $L$ is not rigid, the first lower bound is larger since $|V(H)| \ge Fix(H)$.  Thus, we have $Fix(J) =  |V(H)| \times Fix(L)$. 
\end{proof}

We leave it up to the reader to verify that the fixing number of the jellyfish graph in Figure \ref{fig:jellyfish} is $5$. 

 \section{The Distinguishing Numbers of Amenable Graphs, Part 1}

Shortly, we will express the distinguishing number of an amenable graph $G$ in terms of the distinguishing numbers of its subgraphs.  The stable partitions of the subgraphs, however, are not necessarily consistent with $\mathcal{P}_G$, the stable partition of $G$.  We need $\mathcal{P}_G$ to ``carry-over" as we deal with the subgraphs. 
 Thus, we introduce the notion of {\it celled graphs}.  
 
Let $G$ be a graph and  $\mathcal{P}$ be a partition of $V(G)$.  Then $(G, \mathcal{P})$ is a {\it celled graph}; that is, $G$ is a graph whose vertices are members of the cells in $\mathcal{P}$.   In many cases, we will also consider graphs $H$ with $V(H) \subseteq V(G)$.  Instead of using a partition of $V(H)$, we will just keep the partition of $V(G)$ and denote the celled version of $H$ as $(H, \mathcal{P})$.

  Let $Aut(G, \mathcal{P})$ contain all the automorphisms $\pi$ of $G$ that are {\it cell-preserving}; i.e., for any $v \in V(G)$, $v$ and $\pi(v)$ are in the same cell of $\mathcal{P}$.  Notice that $Id \in Aut(G, \mathcal{P})$ and  $Aut(G, \mathcal{P}) \subseteq Aut(G)$.   A labeling $\phi$ of $G$ is {\it $\mathcal{P}$-distinguishing} if $\phi$ breaks every non-trivial automorphism in $Aut(G, \mathcal{P})$.  Furthermore, $D(G, \mathcal{P})$ is the fewest number of colors needed for a $\mathcal{P}$-distinguishing labeling of $G$.  We shall also use $D((G, \mathcal{P}), c)$ for the number of pairwise inequivalent $\mathcal{P}$-distinguishing labelings of $G$ that use at most $c$ colors.

\begin{lemma}
( \cite{Arvind2017GraphIsomorphism}, Lemma 1)  Suppose $\pi$ is an isomorphism from $G$ to $H$.   When color refinement is applied to $G \cup H$,    $v$ and $\pi(v)$ belong to the same cell in every iteration of the algorithm for any $v \in V(G)$.
\label{lemmacell}
 \end{lemma}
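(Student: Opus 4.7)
The plan is to prove this by induction on the iteration index $i$ of color refinement applied to $G \cup H$. The claim to be established inductively is: for every $v \in V(G)$, the vertices $v$ and $\pi(v)$ lie in the same cell of $\mathcal{P}_i$.

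The base case is immediate: since the algorithm begins with $\mathcal{P}_0 = \{V(G) \cup V(H)\}$, every pair of vertices, including $v$ and $\pi(v)$, lies in the common cell. For the inductive step, assume the statement holds for $\mathcal{P}_{i-1}$ and fix $v \in V(G)$. I would use two facts. First, by the inductive hypothesis applied to $v$ itself, $v$ and $\pi(v)$ share a cell of $\mathcal{P}_{i-1}$; this is a prerequisite for them to potentially share a cell in $\mathcal{P}_i$. Second, I must check that for every cell $X$ of $\mathcal{P}_{i-1}$, the number of neighbors $v$ has in $X$ equals the number of neighbors $\pi(v)$ has in $X$.

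To verify this second point, I would use that $\pi$ is a graph isomorphism, so it induces a bijection between $N_G(v)$ and $N_H(\pi(v))$. The inductive hypothesis, applied to each neighbor $u \in N_G(v)$, says that $u$ and $\pi(u)$ lie in the same cell of $\mathcal{P}_{i-1}$. Hence the map $u \mapsto \pi(u)$ restricts, for each cell $X$, to a bijection between $N_G(v) \cap X$ and $N_H(\pi(v)) \cap X$, showing the two neighbor counts agree. Combining with the fact that $v$ and $\pi(v)$ shared a cell in $\mathcal{P}_{i-1}$, the refinement rule places them in the same cell of $\mathcal{P}_i$, completing the induction.

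There is no significant obstacle here; the argument is a direct unwinding of the definitions. The only small subtlety worth being explicit about is that the inductive hypothesis must be invoked twice per step, once for $v$ and once uniformly over the neighbors of $v$, so that both ``same initial cell'' and ``matching neighborhood profile'' conditions required by the refinement rule are met simultaneously.
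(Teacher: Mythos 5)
Your induction is correct and complete: the base case is trivial, and the inductive step correctly uses the hypothesis both for $v$ itself and for each neighbor of $v$ to show that $\pi$ restricts to a bijection between $N(v)\cap X$ and $N(\pi(v))\cap X$ for every cell $X$ of $\mathcal{P}_{i-1}$. The paper itself states this lemma without proof, citing Arvind et al., and your argument is exactly the standard one given there, so there is nothing to add.
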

 
 
 
 \begin{corollary}
 For any graph $G$,  $Aut(G) = Aut(G, \mathcal{P}_G)$. 
 \label{cor1}
 \end{corollary}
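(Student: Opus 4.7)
The plan is to prove the two inclusions separately. The inclusion $Aut(G, \mathcal{P}_G) \subseteq Aut(G)$ is immediate, since by definition $Aut(G, \mathcal{P})$ consists of those automorphisms of $G$ that are cell-preserving with respect to $\mathcal{P}$. All of the work lies in the other direction: every $\pi \in Aut(G)$ must preserve the cells of the stable partition $\mathcal{P}_G$.

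For the nontrivial inclusion, I would take any $\pi \in Aut(G)$ and show by induction on the CR iteration count that $\pi$ maps each cell of the current partition $\mathcal{P}_i$ (in the run of CR on $G$ starting from $\mathcal{P}_0 = \{V(G)\}$) to itself. The base case $i=0$ is trivial because $\mathcal{P}_0$ has only one cell. For the inductive step, two vertices $u, v$ land in the same cell of $\mathcal{P}_{i+1}$ exactly when they lie in the same cell of $\mathcal{P}_i$ and have matching neighbor-counts into every cell of $\mathcal{P}_i$. Given $v$ and $\pi(v)$, the inductive hypothesis places them in the same cell of $\mathcal{P}_i$; and since $\pi$ is an automorphism that fixes each cell $W$ of $\mathcal{P}_i$ setwise, the number of neighbors of $v$ in $W$ equals the number of neighbors of $\pi(v)$ in $\pi(W) = W$. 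Hence $v$ and $\pi(v)$ share a cell of $\mathcal{P}_{i+1}$, completing the induction. Taking $i$ large enough that $\mathcal{P}_i = \mathcal{P}_G$ yields the desired conclusion.

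An alternative, equally short route is to invoke Lemma \ref{lemmacell} directly with $H$ a disjoint isomorphic copy of $G$ and $\pi$ viewed as an isomorphism $G \to H$: the lemma then guarantees that $v$ and $\pi(v)$ lie in the same cell at every CR iteration on $G \cup H$, and one only needs to observe that the stable partition of $G \cup H$ restricted to $V(G)$ coincides with $\mathcal{P}_G$. I prefer the direct induction because it avoids having to argue about $\mathcal{P}_{G \cup H}$ versus $\mathcal{P}_G$; the main (very minor) obstacle either way is simply keeping the bookkeeping on $\pi$-images of cells straight, but no substantive difficulty arises since $\pi$ is a bijection that preserves adjacency.
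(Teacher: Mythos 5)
Your proof is correct. The inclusion $Aut(G,\mathcal{P}_G)\subseteq Aut(G)$ is indeed immediate from the definition, and your induction on the CR iterations for the reverse inclusion is sound: the base case is trivial, and in the inductive step the fact that $\pi$ fixes every cell $W$ of $\mathcal{P}_i$ setwise (which follows from the bijectivity of $\pi$ together with the hypothesis that each vertex stays in its cell) gives $\pi^{-1}(W)=W$, so neighbor counts into $W$ are preserved and $v$, $\pi(v)$ remain together in $\mathcal{P}_{i+1}$. The one point of divergence from the paper is that the paper does not redo this induction: it simply invokes Lemma \ref{lemmacell} with $H$ taken to be $G$ itself --- not a disjoint isomorphic copy --- so that $G\cup H=G$, color refinement on $G\cup H$ is just color refinement on $G$, and the stable partition in question is literally $\mathcal{P}_G$. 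This sidesteps entirely the $\mathcal{P}_{G\cup H}$-versus-$\mathcal{P}_G$ bookkeeping that made you prefer the direct induction. Your argument is essentially a self-contained reproof of that special case of Lemma \ref{lemmacell}; it costs a few extra lines but has the virtue of not depending on the cited lemma, while the paper's version is a two-line corollary of machinery it has already imported.
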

 
 \begin{proof}
 If we replace $H$ with $G$ in Lemma \ref{lemmacell}, we get that for any automorphism $\pi$ of $G$ and for any $v \in V(G)$, $v$ and $\pi(v)$ will be in the same cell of $\mathcal{P}_G$.  That is, every automorphism of $G$ is cell-preserving with respect to $\mathcal{P}_G$.  It follows that $Aut(G) \subseteq Aut(G, \mathcal{P}_G)$.  But $Aut(G, \mathcal{P}_G) \subseteq Aut(G)$.  Thus, the two sets are equal.  
 \end{proof}

In an earlier paper, we proved the following:

\begin{lemma}
(\cite{Ch25})  Let $G$ and $H$ be graphs such that $V(G) = V(H)$ and $Aut(G) = Aut(H)$.  Then every distinguishing labeling of $G$ is also a distinguishing labeling of $H$ and vice versa so $D(G) = D(H)$.  
\label{lemmasameaut1a}
\end{lemma}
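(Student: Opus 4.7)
The plan is to unwind the definitions and observe that a distinguishing labeling is determined entirely by (i) the underlying vertex set it labels and (ii) the set of automorphisms it must break. Since the hypothesis supplies equality on both counts, the two notions coincide.

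More concretely, I would first note that because $V(G)=V(H)$, any labeling $\phi\colon V(G)\to\{1,\dots,c\}$ is simultaneously a labeling of $H$, so it is meaningful to ask whether the same $\phi$ is distinguishing for both graphs. Then I would spell out the defining condition: $\phi$ is a distinguishing labeling of $G$ iff for every $\pi\in Aut(G)\setminus\{Id\}$ there exists $v\in V(G)$ with $\phi(v)\neq\phi(\pi(v))$, and analogously for $H$. The set of $\pi$'s being quantified over in these two statements is identical by the hypothesis $Aut(G)=Aut(H)$, and the test $\phi(v)\neq\phi(\pi(v))$ depends only on $\phi$ and $\pi$, not on the edge sets of $G$ or $H$. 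Therefore the two statements are literally the same statement, giving the claimed equivalence of distinguishing labelings.

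The equality $D(G)=D(H)$ follows immediately: the set of distinguishing labelings of $G$ (over all palette sizes $c$) coincides with that of $H$, so the minimum number of colors used across this common set is the same for both graphs. There is no real obstacle here; the only thing to be careful about is to not confuse ``$\phi$ is a distinguishing labeling of $G$'' (which references $Aut(G)$ only through its action on vertices) with any condition that might reference the edges of $G$ directly — the definition does not, which is exactly why the lemma works.
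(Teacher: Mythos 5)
Your proposal is correct and matches the argument the paper relies on: the statement is cited from \cite{Ch25} without a written proof here, but the paper's proof of the celled analogue (Lemma \ref{lemmasameaut1b}) is exactly your contrapositive unwinding — a labeling fails to be distinguishing for one graph iff some nontrivial automorphism preserves it, and that condition depends only on the shared vertex set and the shared automorphism group. Nothing is missing.
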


One simple consequence of the lemma is that $D(G) = D(\overline{G})$. We extend this result to celled graphs.

\begin{lemma}
Let $G$ and $H$ be graphs such that $V(G) = V(H)$.    Let $\mathcal{P}$ be a partition of $V(G)$.   If $Aut(G, \mathcal{P}) =   Aut(H, \mathcal{P})$, then every $\mathcal{P}$-distinguishing labeling of $G$ is also a $\mathcal{P}$-distinguishing labeling of $H$ and vice versa so $D(G, \mathcal{P}) =  D(H, \mathcal{P})$.
 \label{lemmasameaut1b}
\end{lemma}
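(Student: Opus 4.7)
The plan is to prove this by unwinding the definitions and then mimicking the argument of Lemma \ref{lemmasameaut1a}, with the only change being the restriction of the relevant automorphism group to its cell-preserving subgroup. Since $V(G) = V(H)$, any function $\phi : V(G) \to \{1, 2, \ldots, c\}$ is simultaneously a vertex labeling of $G$ and of $H$, so the space of candidate labelings is literally the same for both graphs; the only thing that can differ between $G$ and $H$ is which labelings qualify as $\mathcal{P}$-distinguishing.

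First I would recall the definition: $\phi$ is $\mathcal{P}$-distinguishing for $G$ iff for every non-trivial $\pi \in Aut(G, \mathcal{P})$ there exists some $v \in V(G)$ with $\phi(v) \neq \phi(\pi(v))$, and symmetrically for $H$. Since $V(G) = V(H)$ and the vertex sets of the two automorphism groups act on the same underlying set, the criterion ``$\phi$ breaks every non-identity element of $Aut(G, \mathcal{P})$'' is quantifying over the same collection of permutations as the criterion ``$\phi$ breaks every non-identity element of $Aut(H, \mathcal{P})$'' once we invoke the hypothesis $Aut(G, \mathcal{P}) = Aut(H, \mathcal{P})$. Hence the two conditions are literally identical, which yields the first assertion: the class of $\mathcal{P}$-distinguishing labelings of $G$ coincides with that of $H$.

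Taking the minimum over $c$ such that a $\mathcal{P}$-distinguishing labeling with at most $c$ colors exists then immediately gives $D(G, \mathcal{P}) = D(H, \mathcal{P})$, completing the proof. There is no real obstacle here: the statement is essentially a notational corollary of Lemma \ref{lemmasameaut1a}, refined to track the cell-preserving subgroup, and the argument goes through because the definition of ``breaking'' an automorphism depends only on the vertex set and the permutation, not on the edge set of the ambient graph.
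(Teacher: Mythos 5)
Your proposal is correct and follows essentially the same route as the paper: both arguments reduce to the observation that the ``breaking'' condition depends only on the permutation and the labeling (not on the edge set), so equality of the cell-preserving automorphism groups makes the two distinguishing conditions coincide. The paper phrases this via a short contradiction argument rather than your direct ``the quantified conditions are identical'' formulation, but the content is the same.
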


\begin{proof}
Let $\phi$ be a  $\mathcal{P}$-distinguishing labeling of $G$.  If $\phi$ is not $\mathcal{P}$-distinguishing for $H$, then there is a nontrivial automorphism $\pi$ in $ Aut(H, \mathcal{P})$ so that $\phi(v) = \phi(\pi(v))$ for each $v \in V(H)$.  But $V(G) = V(H)$ and $\pi \in Aut(G, \mathcal{P})$ too so this would mean that $\phi$ is not $\mathcal{P}$-distinguishing for $G$, a contradiction.   A similar argument proves the converse.  Thus,  a labeling is $\mathcal{P}$-distinguishing for $G$ if and only if it is  $\mathcal{P}$-distinguishing for $H$.  It follows that $D(G, \mathcal{P}) =  D(H, \mathcal{P})$.
\end{proof}

We now present a result from Arvind et al.~\cite{Arvind2017GraphIsomorphism} about the automorphisms of amenable graphs.  Recall that the global structure of amenable graphs is described using their anisotropic components.

\begin{lemma} (\cite{Arvind2017GraphIsomorphism}, Claim 17)  Let $G$ be an amenable graph with anisotropic components $A_1, A_2, \hdots, A_k$ in $C(G)$.  
For each $i$, let $G_i = G[\cup_{X \in A_i} X]$,  the  subgraph induced by the vertices in the cells of $A_i$.  
 Then $$Aut(G) = \prod_{i=1}^k Aut(G_i, \mathcal{P}_G).$$
That is, $Aut(G)$ is the direct product of $Aut(G_1, \mathcal{P}_G)$, $\hdots$, $Aut(G_k, \mathcal{P}_G)$.
\label{autprod}
\end{lemma}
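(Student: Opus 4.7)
The plan is to exhibit a group isomorphism $\Phi\colon Aut(G)\to\prod_{i=1}^{k}Aut(G_i,\mathcal{P}_G)$ given by restriction, namely $\Phi(\pi)=(\pi|_{V(G_1)},\ldots,\pi|_{V(G_k)})$, and verify in turn that this map is (a) well-defined, (b) a homomorphism, (c) injective, and (d) surjective. The first three are essentially formal; all the content sits in surjectivity, which is where the isotropic/anisotropic dichotomy does the real work.

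For well-definedness, let $\pi\in Aut(G)$. By Corollary \ref{cor1}, $\pi$ is cell-preserving with respect to $\mathcal{P}_G$, so for every cell $X\in\mathcal{P}_G$ we have $\pi(X)=X$. Since $V(G_i)=\bigcup_{X\in A_i}X$ is a union of cells, $\pi(V(G_i))=V(G_i)$, and the restriction $\pi|_{V(G_i)}$ is a bijection of $V(G_i)$ that preserves adjacency (as $E(G_i)\subseteq E(G)$) and cell membership. Thus $\pi|_{V(G_i)}\in Aut(G_i,\mathcal{P}_G)$. The map $\Phi$ is clearly a group homomorphism, and it is injective because the $V(G_i)$ partition $V(G)$, so the tuple of restrictions determines $\pi$ pointwise.

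The main step is surjectivity: given $\pi_i\in Aut(G_i,\mathcal{P}_G)$ for each $i$, define $\pi\colon V(G)\to V(G)$ by $\pi(v)=\pi_i(v)$ whenever $v\in V(G_i)$. By construction $\pi$ is a bijection that is cell-preserving, and it preserves edges within each $G_i$. It remains to handle edges between $G_i$ and $G_j$ for $i\neq j$. Take any $u\in V(G_i)$ and $v\in V(G_j)$, lying in cells $X\in A_i$ and $Y\in A_j$ respectively. Because $X$ and $Y$ lie in distinct anisotropic components of $C(G)$, the edge $XY$ in $C(G)$ is isotropic; that is, $G[X,Y]$ is either empty or complete bipartite. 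Since $\pi(u)\in X$ and $\pi(v)\in Y$ by cell-preservation, we get
\[
uv\in E(G)\iff G[X,Y]\text{ is complete bipartite}\iff \pi(u)\pi(v)\in E(G),
\]
and so cross-component adjacency is preserved. Hence $\pi\in Aut(G)$ and $\Phi(\pi)=(\pi_1,\ldots,\pi_k)$, establishing surjectivity.

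The one subtle point to confirm is that the above argument really covers \emph{all} pairs of vertices in $G$: those in the same cell, those in different cells of the same component $A_i$, and those in cells of different components. The first two cases are handled because $\pi_i$ is an automorphism of $G_i$, which already sees every edge of $G$ with both endpoints in $V(G_i)$ (including any isotropic edges internal to the component). The third case is exactly the computation above. Combining surjectivity with the earlier steps gives $Aut(G)\cong\prod_{i=1}^{k}Aut(G_i,\mathcal{P}_G)$, as claimed. The only place the amenability hypothesis enters is through Lemma \ref{local} and the partition of $C(G)$ into anisotropic components, which together guarantee that every inter-component bipartite graph $G[X,Y]$ is homogeneous and therefore invariant under arbitrary cell-preserving bijections.
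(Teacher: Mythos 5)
Your proof is correct. There is, however, no internal proof in the paper to compare it against: the lemma is imported verbatim from Arvind et al.~\cite{Arvind2017GraphIsomorphism} (their Claim 17), and the paper adds only the one-line remark that the factor automorphisms are implicitly cell-preserving. Your argument therefore supplies a complete, self-contained derivation that the paper omits, and the mechanism you use is the right one: the restriction map $\Phi$ handles well-definedness, homomorphism, and injectivity formally, and surjectivity reduces to the observation that any edge of $C(G)$ joining cells in distinct anisotropic components is isotropic by the very definition of those components (they are the connected components remaining after all isotropic edges are deleted), so each cross-component bipartite graph $G[X,Y]$ is empty or complete and hence invariant under every cell-preserving bijection. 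One small correction to your closing remark: neither amenability nor Lemma~\ref{local} is actually used anywhere in your argument. Well-definedness needs only Corollary~\ref{cor1} (automorphisms respect the stable partition, which holds for every graph), and surjectivity needs only the definitional fact about cross-component edges just mentioned; so your proof in fact establishes the decomposition $Aut(G) = \prod_{i=1}^{k} Aut(G_i, \mathcal{P}_G)$ for an arbitrary graph relative to its stable partition, with the amenability hypothesis playing no role in this particular lemma.
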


Note that Claim 17 in \cite{Arvind2017GraphIsomorphism} simply states that $Aut(G)$ is the direct product of $Aut(G_1), \hdots, Aut(G_k)$.  Implicit in their discussion, however, is that when $\pi \in Aut(G_i)$,  $\pi$ is also is cell-preserving with respect to $\mathcal{P}_G$.

\begin{theorem}
\label{mainthm1}
Let $G$ be an amenable graph whose anisotropic components in $C(G)$ are $A_1, A_2, \hdots, A_k$.   
For each $i$, let $G_i = G[\cup_{X \in A_i} X]$.  
A labeling $\phi$ of $G$ is distinguishing if and only if $\phi$ when restricted to the vertices of $G_i$  is $\mathcal{P}_G$-distinguishing for $i =1, \hdots, k$.  Consequently, $$D(G) = \max \{D(G_i, \mathcal{P}_G), i = 1, \hdots, k \}.$$

\end{theorem}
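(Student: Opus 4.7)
The plan is to reduce the statement to a direct application of Corollary \ref{cor1} together with Lemma \ref{autprod}. The key observation is that the vertex sets of the induced subgraphs $G_1, \ldots, G_k$ are pairwise disjoint and cover $V(G)$, since the cells of $\mathcal{P}_G$ partition $V(G)$ and each cell belongs to exactly one anisotropic component. Thus any labeling $\phi$ of $G$ decomposes uniquely into its restrictions $\phi_1, \ldots, \phi_k$, one per $G_i$, and conversely any choice of labelings on the $G_i$'s assembles into a labeling of $G$.

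First I would translate ``distinguishing'' into ``$\mathcal{P}_G$-distinguishing.'' By Corollary \ref{cor1}, $Aut(G) = Aut(G, \mathcal{P}_G)$, so $\phi$ is a distinguishing labeling of $G$ iff $\phi$ breaks every non-trivial $\pi \in Aut(G, \mathcal{P}_G)$. By Lemma \ref{autprod}, every such $\pi$ decomposes uniquely as $\pi_1 \pi_2 \cdots \pi_k$ with $\pi_i \in Aut(G_i, \mathcal{P}_G)$, and because the $V(G_i)$ are pairwise disjoint these factors act on disjoint vertex sets; $\pi$ is non-trivial iff at least one $\pi_i$ is non-trivial, and $\pi$ fixes the labels of $\phi$ iff each $\pi_i$ fixes the labels of $\phi_i$.

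For the forward direction, assume some $\phi_i$ is not $\mathcal{P}_G$-distinguishing for $G_i$. Pick a non-trivial $\pi_i \in Aut(G_i, \mathcal{P}_G)$ that preserves $\phi_i$, extend by the identity on every other $V(G_j)$, and observe via Lemma \ref{autprod} that the result is a non-trivial element of $Aut(G)$ preserving $\phi$, contradicting the distinguishing property. For the converse, assume every $\phi_i$ is $\mathcal{P}_G$-distinguishing for $G_i$, let $\pi \in Aut(G)$ preserve $\phi$, decompose $\pi = \pi_1 \cdots \pi_k$ via Lemma \ref{autprod}; each $\pi_i$ preserves $\phi_i$ by the disjointness, so each $\pi_i$ is the identity, hence so is $\pi$.

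Finally, the formula for $D(G)$ follows from this characterization. For the lower bound, any distinguishing labeling of $G$ restricts to a $\mathcal{P}_G$-distinguishing labeling of $G_i$ on its own colors, giving $D(G) \ge D(G_i, \mathcal{P}_G)$ for every $i$. For the upper bound, let $c = \max_i D(G_i, \mathcal{P}_G)$; choose for each $i$ a $\mathcal{P}_G$-distinguishing labeling of $G_i$ using colors from $\{1, \ldots, c\}$ and paste them together (possible because the $V(G_i)$'s are disjoint and cover $V(G)$). The characterization just proved certifies that the combined labeling is distinguishing for $G$, so $D(G) \le c$. I do not expect any of the steps to be a real obstacle, since Lemma \ref{autprod} already packages the nontrivial structural content; the main care is simply tracking the disjointness of the $V(G_i)$'s when arguing that $\pi_i$ preserves $\phi_i$ in the factorization.
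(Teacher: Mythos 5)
Your proposal is correct and follows essentially the same route as the paper: both directions of the equivalence are argued via the direct-product decomposition of $Aut(G)$ from Lemma \ref{autprod} (extending a non-trivial color-preserving $\pi_i$ by the identity elsewhere, and conversely factoring a color-preserving automorphism into its components), and the formula for $D(G)$ follows by restriction and pasting exactly as in the paper's proof. Your explicit appeal to Corollary \ref{cor1} and the disjointness of the $V(G_i)$'s only makes visible what the paper leaves implicit.
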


\begin{proof}
Let $\phi$ be a labeling of $G$ and denote as $\phi_i$ the labeling of $G_i$ when $\phi$ is restricted to $V(G_i)$. If $\phi_i$ is not  $\mathcal{P}_G$-distinguishing for $G_i$ then there is some non-trivial automorphism $\tau_i \in Aut(G_i, \mathcal{P}_G)$ that preserves the colors of $\phi_i$.  Now, the identity map $Id_j$ of $V(G_j)$ is an element of $Aut(G_j, \mathcal{P}_G)$ for each $j$.  By Lemma \ref{autprod},  $\pi = (Id_1, \hdots, Id_{i-1}, \tau_i, Id_{i+1}, \hdots, Id_k)$  is a non-trivial automorphism of $G$. But $\pi$ also preserves the colors assigned by $\phi$ so it is a non-trivial automorphism of $(G, \phi)$.  It follows that $\phi$ is not a distinguishing labeling of $G$. 
We have shown that if $\phi$ is a distinguishing labeling of $G$ then $\phi_i$ is $\mathcal{P}_G$-distinguishing for $G_i$, for each $i$.

Conversely, assume $\phi_i$ is  $\mathcal{P}_G$-distinguishing for $G_i$ for each $i$.  By Lemma \ref{autprod}, every automorphism $\pi$ of $(G, \phi)$ can be expressed as $(\pi_1, \pi_2, \hdots, \pi_k)$ where $\pi_i \in Aut(G_i, \mathcal{P}_G)$ for each $i$.  But from the assumption, each $\pi_i$ is trivial.  Thus, $\pi$ has to be trivial so $\phi$ is a distinguishing labeling of $G$.

We have shown that $\phi$ is a distinguishing labeling of $G$ if and only if $\phi_i$ is is  $\mathcal{P}_G$-distinguishing for $G_i$, for each $i$.  If $\phi$ uses $D(G)$ colors, then $\phi_i$ uses at most $D(G)$ colors as well so   $\max \{D(G_i, \mathcal{P}_G), i = 1, \hdots, k\} \leq D(G)$. On the other hand, if we pick each $\phi_i$ so that it uses $D(G_i, \mathcal{P}_G)$ colors then $\phi$ uses at most $\max \{D(G_i, \mathcal{P}_G), i = 1, \hdots, k\}$ colors so $D(G) \leq \max \{D(G_i, \mathcal{P}_G), i = 1, \hdots, k\}$.   Hence, $D(G) = \max \{D(G_i, \mathcal{P}_G), i = 1, \hdots, k\}.$
\end{proof}

\subsection{Some Simplifications}

According to Theorems \ref{mainthm1},  when $G$ is an amenable graph, we can determine $D(G)$ by computing  $D(G_i, \mathcal{P}_G)$ for each $i$. 
The structure of $G_i$, however, appears to be complicated.  Hence, we will make a sequence of modifications to $G_i$ to create a new graph $J_i$ on the same vertex set so that $D(G_i, \mathcal{P}_G) = D(J_i, \mathcal{P}_G)$.   The types of modifications are described in the two lemmas below. We note that as we make changes to $G_i$, the underlying graph is no longer $G_i$ but some graph $F$ on the same vertex set.




\begin{lemma}
Let $G$ be an amenable graph, $A_i$  the $i$th anisotropic component of $G$ and $G_i = G[\cup_{X \in A_i} X]$.
 Let $F$ be a graph on $V(G_i)$ and let $X$ be a cell of $A_i$.  Denote as $F'$ the graph obtained from $F$ by replacing $F[X]$ with its complement.  
Then $Aut(F, \mathcal{P}_G) = Aut(F', \mathcal{P}_G)$. 
\label{lemmamodif1}
\end{lemma}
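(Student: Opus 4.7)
The plan is to show that a permutation $\pi$ of $V(G_i)$ that is cell-preserving with respect to $\mathcal{P}_G$ is an automorphism of $F$ if and only if it is an automorphism of $F'$. Since $Aut(F, \mathcal{P}_G)$ and $Aut(F', \mathcal{P}_G)$ both consist of exactly the cell-preserving bijections of $V(G_i)$ that preserve adjacency in the respective graph, this equivalence yields the claim.

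First I would fix any $\pi$ that is cell-preserving with respect to $\mathcal{P}_G$ and note that $\pi$ sends the cell $X$ to itself setwise, so $\pi|_X$ is a permutation of $X$. Next, I would observe that $F$ and $F'$ agree on every pair of vertices that is not contained entirely inside $X$: pairs within a cell $Y\neq X$, and pairs with endpoints in two different cells, are untouched by the complementation step. Hence whether $\pi$ preserves adjacency on such ``outside'' pairs is literally the same condition for $F$ as for $F'$.

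For pairs whose two endpoints both lie in $X$, the relevant subgraphs are $F[X]$ and $F'[X] = \overline{F[X]}$. Since $\pi|_X$ is a bijection on $X$, I would invoke the elementary fact that a bijection from the vertex set of a graph to itself preserves edges if and only if it preserves non-edges; equivalently, $Aut(H) = Aut(\overline{H})$ for any graph $H$. Applied to $H = F[X]$, this gives $\pi|_X \in Aut(F[X])$ iff $\pi|_X \in Aut(F'[X])$.

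Combining the outside and inside analyses, $\pi \in Aut(F)$ iff $\pi \in Aut(F')$; since cell-preservation is identical for both sides, $Aut(F, \mathcal{P}_G) = Aut(F', \mathcal{P}_G)$. I do not anticipate a genuine obstacle here: the argument is essentially the bookkeeping observation that a cell-preserving permutation decouples into a permutation of each cell plus inter-cell behavior, together with the standard fact that automorphisms of a graph coincide with automorphisms of its complement.
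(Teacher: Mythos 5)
Your proof is correct and follows essentially the same route as the paper's: both arguments use cell-preservation to conclude $\pi(X)=X$, split vertex pairs into those inside $X$ (where complementation flips both sides of the adjacency biconditional, i.e.\ $Aut(H)=Aut(\overline{H})$) and those not inside $X$ (where $F$ and $F'$ agree). The only cosmetic difference is that the paper proves one inclusion and obtains the reverse by complementing $X$ a second time, whereas you argue the biconditional directly; both are fine.
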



\begin{proof}
Let $\pi \in Aut(F, \mathcal{P}_G)$.   We will show that $\pi \in Aut(F', \mathcal{P}_G)$.  Consider a pair of vertices $\{u,v\}$ of $F'$.  If $u, v \in X$, then $\pi(u), \pi(v) \in X$ because $\pi$ is cell-preserving.  Now $uv \in E(F')$ if and only if $uv \not \in E(F)$ by construction.  Since $\pi$ is an automorphism of $F$, $uv \not \in E(F)$ if and only if $\pi(u)\pi(v) \not \in E(F)$. The latter is true if and only if $\pi(u)\pi(v)  \in E(F')$ by construction.  So we've shown that $uv \in E(F')$ if and only if $\pi(u) \pi(v) \in E(F')$.   Suppose  $u \not \in X$ or $v \not \in X$.  Then $\pi(u) \not \in X$ or $\pi(v) \not \in X$.  Thus, the adjacency/non-adjacency between $u$ and $v$ and between $\pi(u)$ and $\pi(v)$ were unaffected when $F[X]$ was replaced by its complement. 
 Since $\pi$ is an automorphism of $F$,  $uv$ is an edge of $F$ and $F'$ if and only if $\pi(u)\pi(v)$ is an edge of $F$ and $F'$.  We have shown that $Aut(F, \mathcal{P}_G) \subseteq  Aut(F', \mathcal{P}_G)$. 

Suppose we modify $F'$ by replacing $F'[X]$ with its complement.  Call the new graph $F''$.  Then the argument above implies that  $Aut(F', \mathcal{P}_G) \subseteq  Aut(F'', \mathcal{P}_G)$.  But the complement of $F'[X]$  is just $F[X]$ so $F''= F$.  Thus,  $Aut(F, \mathcal{P}_G) = Aut(F', \mathcal{P}_G)$.
\end{proof}



The proof of the next lemma is just like the one above so we omit it. 

\begin{lemma}
Let $G$ be an amenable graph, $A_i$  the $i$th anisotropic component of $G$ and $G_i = G[\cup_{X \in A_i} X]$. 
 Let $F$ be a graph on $V(G_i)$ and let $X$ and $Y$ be cells in $A_i$.  Denote as $F''$  the graph obtained from $F$ by replacing $F[X, Y]$ with its complement.  Then $Aut(F, \mathcal{P}_G) = Aut(F'', \mathcal{P}_G)$. 
\label{lemmamodif2}
\end{lemma}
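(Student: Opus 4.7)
The plan is to mirror the proof of Lemma \ref{lemmamodif1} verbatim, with ``replace $F[X]$ by its complement'' swapped out for ``replace $F[X,Y]$ by its bipartite complement.'' The key algebraic facts are the same: bipartite complementation, like ordinary complementation, is an involution; it only toggles adjacencies inside a designated region; and cell-preserving automorphisms respect that region.

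Concretely, I would take an arbitrary $\pi \in Aut(F, \mathcal{P}_G)$ and show $\pi \in Aut(F'', \mathcal{P}_G)$ by fixing a pair $\{u,v\} \subseteq V(G_i)$ and arguing $uv \in E(F'') \iff \pi(u)\pi(v) \in E(F'')$. The case split is by whether $\{u,v\}$ straddles the cells $X$ and $Y$. If $u \in X$ and $v \in Y$ (or vice-versa), cell preservation gives $\pi(u) \in X$ and $\pi(v) \in Y$ (never swapped, since $\pi$ maps each cell to itself), so both pairs lie inside the bipartite region that was flipped; the construction of $F''$ then gives $uv \in E(F'') \iff uv \notin E(F)$, and since $\pi$ is an automorphism of $F$ this is equivalent to $\pi(u)\pi(v) \notin E(F)$, i.e.\ to $\pi(u)\pi(v) \in E(F'')$. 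In every other case (both endpoints in the same cell, or at least one endpoint outside $X \cup Y$), the pair $\{u,v\}$ lies outside the flipped region; cell preservation forces $\{\pi(u),\pi(v)\}$ to lie outside it as well, so adjacency in $F$ and $F''$ agree on both pairs and the biconditional is inherited directly from $\pi \in Aut(F, \mathcal{P}_G)$. This establishes $Aut(F, \mathcal{P}_G) \subseteq Aut(F'', \mathcal{P}_G)$.

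The reverse inclusion comes for free from the involution property: replacing $F''[X,Y]$ by its bipartite complement restores $F[X,Y]$, so $(F'')'' = F$, and applying the inclusion just proved with $F''$ in place of $F$ yields $Aut(F'', \mathcal{P}_G) \subseteq Aut(F, \mathcal{P}_G)$. Hence the two groups are equal. I do not anticipate a genuine obstacle; the only point that warrants explicit mention is that ``cell-preserving'' means $\pi(X) = X$ and $\pi(Y) = Y$ rather than the weaker statement that $\pi$ setwise preserves $X \cup Y$, so there is no possibility of $\pi$ exchanging the two sides of the bipartite region and accidentally landing on a non-flipped pair.
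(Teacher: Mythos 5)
Your proof is correct and matches the paper's approach exactly: the paper omits this proof, stating only that it is ``just like'' the proof of Lemma~\ref{lemmamodif1}, and your adaptation (case-splitting on whether the pair straddles $X$ and $Y$, using cell preservation to keep images inside or outside the flipped region, and invoking the involution property for the reverse inclusion) is precisely that argument.
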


  Consider the anisotropic component $A_i$.  By Lemma \ref{global}, $A_i$ is a tree.  If $A_i$ has a heterogeneous vertex, root $A_i$ at this vertex.  Otherwise, pick a cell whose size is the least among the cells in $A_i$ and root $A_i$ at this vertex.  We shall refer to the root of $A_i$ as $R_i$.   Here are the specific modifications we want to make on $G_i$.

{\bf (M1) For root $R_i$:} The cell $R_i$ is either the lone heterogeneous vertex in $A_i$ or one of the homogeneous vertices with the least size in $A_i$.  If $G_i[R_i]$ is a matching graph $rK_2$ or an empty graph, replace $G_i[X]$ with its complement. 


{\bf (M2) For every other cell $X$ of $A_i$:}  The cell $X$ is a homogeneous vertex.  If $G_i[X]$ is a complete graph, replace $G_i[X]$ with its complement.

{\bf (M3) For each pair of cells $X, Y$ of $A_i$:}  The pair is either an isotropic or anisotropic edge in $C(G)$.  If $XY$ is an isotropic edge (and therefore not present in $A_i$) and $G_i[X, Y]$ is a complete bipartite graph, replace it with an empty bipartite graph. If $XY$ is an anisotropic edge and $G_i[X,Y]$ is the complement of  $sK_{1,t}$, replace it with $sK_{1,t}$.


Call the resulting graph $J_i$.

 \begin{figure}

\begin{tikzpicture}[
  every node/.style={draw=none, rounded rectangle, minimum width=2cm, minimum height=0.8cm},
  level distance=1.5cm,
  sibling distance=2cm
  ]

  \node[fill=lightgray] {$R_i$ (5)}
    child {node[fill=lightgray] {$X_1$ (10)}
    	child {node[fill=lightgray] {$Y_1$ (30)}}
	child {node[fill=lightgray] {$Y_2$ (20)}}
	}
    child {node[fill=lightgray] {$X_2$ (15)}}
    child {node[fill=lightgray] {$X_3$ (5)}
        child {node[fill=lightgray] {$Z_1$ (15)}}
      };
\end{tikzpicture}
\hspace*{2em}
\begin{tikzpicture}[
  outer/.style={draw=none, fill=lightgray, rounded rectangle, minimum width=2.2cm, minimum height=0.75cm},
  inner/.style={circle, fill=black, inner sep=2pt},
  every path/.style={thick}
]

\node[outer] (root) at (0,0) {};
\node[outer] (child1) at (-2.5,-1.5) {};
\node[outer] (child2) at (0,-1.5) {};
\node[outer] (child3) at (2.5,-1.5) {};
\node[outer] (g1) at (-3.5,-3) {};
\node[outer] (g2) at (-1.5,-3) {};
\node[outer] (g3) at (2.5,-3) {};

\node[inner] (r1) at ($(root)+(0,0)$) {};

\node[inner] (c11) at ($(child1)+(-0.3,0)$) {};
\node[inner] (c12) at ($(child1)+(0.3,0)$) {};

\node[inner] (c21) at ($(child2)+(-0.6,0)$) {};
\node[inner] (c22) at ($(child2)+(0,0)$) {};
\node[inner] (c23) at ($(child2)+(0.6,0)$) {};

\node[inner] (c31) at ($(child3)+(0,0)$) {};

\node[inner] (g11) at ($(g1)+(-0.75,0)$) {};
\node[inner] (g12) at ($(g1)+(-0.45,0)$) {};
\node[inner] (g13) at ($(g1)+(-0.15,0)$) {};
\node[inner] (g14) at ($(g1)+(0.15,0)$) {};
\node[inner] (g15) at ($(g1)+(0.45,0)$) {};
\node[inner] (g16) at ($(g1)+(0.75,0)$) {};

\node[inner] (g21) at ($(g2)+(-0.6,0)$) {};
\node[inner] (g22) at ($(g2)+(-0.2,0)$) {};
\node[inner] (g23) at ($(g2)+(0.2,0)$) {};
\node[inner] (g24) at ($(g2)+(0.6,0)$) {};

\node[inner] (g31) at ($(g3)+(-0.6,0)$) {};
\node[inner] (g32) at ($(g3)+(0,0)$) {};
\node[inner] (g33) at ($(g3)+(0.6,0)$) {};

\foreach \ci in {c11, c12, c21, c22, c23, c31} {
   \draw (r1) -- (\ci);
 }
 \foreach \gi in {g11, g12, g13, g21, g22}{
   \draw (c11) -- (\gi);
  }

 \foreach \gi in {g14, g15, g16, g23, g24}{
   \draw (c12) -- (\gi);
  }

\foreach \gi in {g31, g32, g33}{
   \draw (c31) -- (\gi);
  }
\end{tikzpicture}
\caption{On the left is an anisotropic component $A_i$ rooted at $R_i$.  Each vertex of $A_i$ is a cell.  The numbers indicate the sizes of the cells.  The graph $G_i$ induced by the vertices in the cells of $A_i$ is not shown.  When $G_i$ is modified using steps (M1), (M2) and (M3), the result is the jellyfish graph $J_i$.  The right figure shows a leg of $J_i$ and highlights the fact that it is a celled graph. Its structure mimics $A_i$ but the branching of the vertices is based on the sizes of the parent and child cells.}
\label{fig:examplewithleg}

 \end{figure}
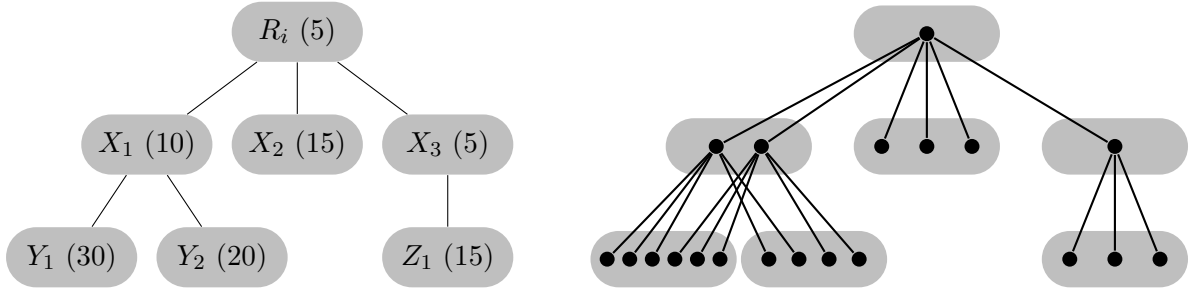

\begin{theorem}
Let $J_i$ be the graph obtained by performing the modifications (M1), (M2) and (M3) on $G_i$.  Then $Aut(G_i, \mathcal{P}_G) = Aut(J_i, \mathcal{P}_G)$.   Consequently, $D(G_i, \mathcal{P}_G) = D(J_i, \mathcal{P}_G)$.  
\label{thmconvert}
\end{theorem}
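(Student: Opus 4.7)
The plan is to obtain $J_i$ from $G_i$ by applying the modifications (M1), (M2), (M3) one at a time, and to show that each single modification preserves the cell-preserving automorphism group $Aut(\cdot, \mathcal{P}_G)$. Since each type of modification either replaces some $F[X]$ by its complement or replaces some $F[X,Y]$ by its complement (note that in (M3), turning a complete bipartite graph into an empty bipartite graph and turning the bipartite complement of $sK_{1,t}$ into $sK_{1,t}$ are both instances of bipartite complementation), Lemmas \ref{lemmamodif1} and \ref{lemmamodif2} apply directly at each step.

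More concretely, I would enumerate the modifications in an arbitrary but fixed order to obtain a sequence of graphs $G_i = F^{(0)}, F^{(1)}, F^{(2)}, \ldots, F^{(t)} = J_i$, all on the vertex set $V(G_i)$. If $F^{(j+1)}$ is obtained from $F^{(j)}$ by a (M1) or (M2) modification on a cell $X \in A_i$, then Lemma \ref{lemmamodif1} (applied with $F := F^{(j)}$) yields $Aut(F^{(j)}, \mathcal{P}_G) = Aut(F^{(j+1)}, \mathcal{P}_G)$. If $F^{(j+1)}$ comes from an (M3) modification on a pair of cells $X, Y \in A_i$, then Lemma \ref{lemmamodif2} gives the same equality. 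Chaining these equalities across $j = 0, 1, \ldots, t-1$ yields
\[
  Aut(G_i, \mathcal{P}_G) \;=\; Aut(F^{(0)}, \mathcal{P}_G) \;=\; \cdots \;=\; Aut(F^{(t)}, \mathcal{P}_G) \;=\; Aut(J_i, \mathcal{P}_G).
\]

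Once this equality of automorphism groups is established, the second assertion $D(G_i, \mathcal{P}_G) = D(J_i, \mathcal{P}_G)$ is immediate from Lemma \ref{lemmasameaut1b} applied to the pair $(G_i, J_i)$ with the common partition $\mathcal{P}_G$ (both graphs share the vertex set $V(G_i) = V(J_i)$, and the partition of that vertex set induced by $\mathcal{P}_G$ consists exactly of the cells in $A_i$).

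The only part that requires a bit of care is verifying that Lemmas \ref{lemmamodif1} and \ref{lemmamodif2} remain applicable at each stage, since they are stated about a graph $F$ on $V(G_i)$ rather than about $G_i$ itself; but both lemmas are stated in that generality, so passing the intermediate $F^{(j)}$ in as $F$ is legitimate. I do not expect any serious obstacle here — the substance of the theorem has been absorbed into the two preceding lemmas, and this result is essentially just a bookkeeping step to package all the local complementations into a single statement that $G_i$ and $J_i$ are indistinguishable from the point of view of cell-preserving automorphisms (and hence of $\mathcal{P}_G$-distinguishing labelings).
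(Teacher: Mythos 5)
Your proposal is correct and follows essentially the same route as the paper's proof: apply Lemmas \ref{lemmamodif1} and \ref{lemmamodif2} to each modification in turn, chain the resulting equalities of cell-preserving automorphism groups, and then invoke Lemma \ref{lemmasameaut1b}. Your explicit observation that both cases of (M3) are bipartite complementations is a worthwhile detail that the paper leaves implicit.
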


\begin{proof}
We note that $J_i$ is obtained by performing a sequence of modifications on $G_i$ and the modifications are of the type described in Lemmas \ref{lemmamodif1} and  \ref{lemmamodif2}.  It follows that the automorphisms of the celled graphs throughout the process stayed the same.  In particular, the first celled graph has the same automorphisms as the last celled graph; i.e., $Aut(G_i, \mathcal{P}_G) = Aut(J_i, \mathcal{P}_G)$. By Lemma \ref{lemmasameaut1b},  
 this implies that $D(G_i, \mathcal{P}_G) = D(J_i, \mathcal{P}_G)$. 
\end{proof}


\section{The Distinguishing Numbers of Amenable Graphs, Part  2}

Let us analyze the structure of $J_i$ next.  Recall that a jellyfish graph has a head $H$ that contains a Hamiltonian cycle  and rooted at every vertex of $H$ is a leg that is isomorphic to $L$, a rooted tree.  Additionally, no two legs share a vertex. In our next theorem, we will prove that $J_i$ is a jellyfish graph.  Moreover, its legs are {\it cell-isomorphic}; that is, for any two legs $L_v$ and $L_w$, there is an isomorphism $\pi$ so that $a$ and $\pi(a)$ belong to the same cell of $\mathcal{P}_G$ for any vertex $a$ of $L_v$.  See Figure \ref{fig:examplewithleg} for an example of what a leg of $J_i$ might look like. 



\begin{theorem}
Let $J_i$ be the graph obtained by performing the modifications (M1), (M2) and (M3) on $G_i$.  Then $J_i$ is a jellyfish graph and its legs are pairwise cell-isomorphic.  
\label{structjellyfish}
\end{theorem}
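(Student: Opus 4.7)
The strategy is to identify the head $H$ of $J_i$ as $J_i[R_i]$, to define the legs $L_v$ (one per vertex $v\in R_i$) by following the rooted tree $A_i$ downward through the $sK_{1,t}$ structures left behind by (M3), and then to check the defining properties of a jellyfish graph together with the cell-isomorphism of the legs.

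First I would establish that $H := J_i[R_i]$ contains a Hamiltonian cycle by case analysis. By Lemma \ref{local}, $G_i[R_i]$ is one of: a $5$-cycle, a matching $rK_2$, its complement $\overline{rK_2}$, an empty graph, or a complete graph. Modification (M1) replaces matchings by their complements and empty graphs by complete graphs, leaving the other cases untouched. Hence $H$ is a $5$-cycle, a cocktail-party graph $\overline{rK_2}$ with $r \ge 2$, or a complete graph $K_n$, each of which is Hamiltonian in the relevant sizes.

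Next I would define the legs inductively on depth in $A_i$. For each $v\in R_i$, set $L_v\cap R_i=\{v\}$, and for each cell $X$ at depth $d>0$ with parent cell $Y$ in $A_i$, let $L_v\cap X$ consist of the vertices of $X$ adjacent in $J_i$ to some vertex of $L_v\cap Y$. After (M3), $J_i[Y,X]$ is exactly $sK_{1,t}$, and by Lemma \ref{global} we have $|Y|\le |X|$, so $Y$ is the side carrying the $s=|Y|$ centers and every $x\in X$ has a unique neighbor in $Y$. Thus each vertex of $X$ lies in a unique leg, and an induction on depth yields $|L_v\cap X| = |X|/|R_i|$ for every descendant cell $X$. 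This shows the legs are pairwise vertex-disjoint, partition $V(J_i)\setminus R_i$, and each $L_v$ is a rooted tree at $v$ (every non-root vertex of $L_v$ has exactly one parent neighbor in $L_v$).

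It remains to check that no edges of $J_i$ cross between legs, and to exhibit the cell-isomorphisms. The only inter-cell edges surviving in $J_i$ arise from anisotropic edges of $A_i$: (M2) removes all internal edges of non-root cells and (M3) empties all isotropic edges. Along each anisotropic edge the $sK_{1,t}$ structure lies entirely inside a single leg by construction, so no between-leg edges exist. For the cell-isomorphism, the count $|L_v\cap X| = |X|/|R_i|$ is independent of $v$, and the leg's rooted-tree structure is determined entirely by $A_i$ and the branching factors $|X|/|Y|$ at each parent-child pair; a cell-preserving isomorphism $L_v\to L_w$ is then built by induction on depth. I expect the main obstacle to be the bookkeeping: carefully verifying that the star structures at successive depths nest consistently (so that the inductive definition really yields disjoint, tree-shaped legs), and cleanly stitching the per-level bijections into a cell-preserving isomorphism between any two legs.
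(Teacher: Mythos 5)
Your proposal is correct and follows essentially the same route as the paper: identify the head $J_i[R_i]$ as Hamiltonian by case analysis on (M1), use the $sK_{1,t}$ structure along anisotropic (parent--child) edges of $A_i$ to give each non-root vertex a unique parent, and induct to get disjoint tree-shaped legs with cell-isomorphisms determined by the branching ratios $|X|/|Y|$. The only cosmetic difference is that you build the legs top-down by depth (with the count $|L_v\cap X| = |X|/|R_i|$), whereas the paper inducts bottom-up on the height of the subtrees of $A_i$; the underlying argument is the same.
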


\begin{proof}
From Lemma \ref{local}, $G_i[R_i]$ is either a 5-cycle, an empty graph, a complete graph, a matching graph or its complement.  After  (M1) is applied, $J_i[R_i]$ is either a 5-cycle, a complete graph or the complement of a matching graph.  It is easy to check that in each case $J_i[R_i]$ has a Hamiltonian cycle.  We shall now refer to $J_i[R_i]$ as the {\it head} of $J_i$.  

To determine the legs of $J_i$, let us first consider its edges.  After applying (M1), (M2) and (M3), the graph $J_i[X]$ is  empty for any cell $X \neq R_i$ of $A_i$ and the bipartite graph $J_i[X,Y]$ is also empty for any two cells $X$ and $Y$ that are not adjacent in $A_i$ since $XY$ is an isotropic edge of $C(G)$.  Thus, if $xy$ is an edge of $J_i$,  then either (i) $x, y \in R_i$ or (ii) $x \in X$, $y \in Y$ and $XY$ is an edge of $A_i$.  

Recall that $A_i$ is a rooted tree.  By Lemma \ref{global}, if $X$ is the parent of $Y$, then $|X| \leq |Y|$. From (M3), $J_i[X,Y]$ has the structure of $sK_{1,t}$ with the centers of the stars in $X$ and the leaves in $Y$.  Since no two vertices in $X$ share a neighbor in $Y$, every $y \in Y$ has a unique neighbor in $X$, which we shall designate as the {\it parent of $y$}.  This parent-child relationship  induces a hierarchy on the vertices of $J_i$ which we will now take advantage of.





For each cell $X$ in $A_i$, let $T_X$ be the subtree of $A_i$ rooted at $X$.  For each $x \in X$, let $L_x$ be the subgraph induced by $x$ and the vertices that have $x$ as an ancestor.   We shall use induction on the height of $T_X$ to prove the following claim:

{\it Claim:}  For each $x \in X$, $L_x$ is a tree and for $x, x' \in X$, $L_x$ and $L_{x'}$ are disjoint and  cell-isomorphic.

{\it Proof of claim:} When $T_X$ has height $0$, $X$ is a leaf in $A_i$.  For $x, x' \in X$, $L_x$ and $L_{x'}$ just consists of $x$ and $x'$ respectively.  They are clearly disjoint trees and cell-isomorphic.  
Assume that the claim holds for cells $Y$ such that $T_Y$ has height less than $h$.  Let $T_X$ have height $h$ and let the children of $X$ in $A_i$ be $Y_1, Y_2, \hdots, Y_{\ell}$.  
Suppose $x \in X$  has $t_j$ neighbors in $Y_j$: $u_{j,1}, u_{j,2}, \hdots, u_{j, t_j}$.  By the induction hypothesis,  the set $\{L_{u_{j,k}},  k = 1, \hdots, t_j\}$ are pairwise disjoint trees for each $j$.  There are also no edges between  $L_{u_{j, k}}$ and $L_{u_{j',k'}}$ when $j \neq j'$ because it would mean that there is an edge between $Y_j$ and $Y_{j'}$ in $A_i$, contradicting the tree structure of $A_i$.   Since $x$ has edges only to the roots of the trees in $\bigcup_{j=1}^\ell \{L_{u_{j,k}},  k = 1, \hdots, t_j\}$,  $L_x$ is a tree.  

Let $x'$ be another vertex in $X$. We know that the neighbors of $x'$ in $Y_1, Y_2, \hdots, Y_{\ell}$ are distinct from those of $x$ even though the number of neighbors in each set is the same for both vertices.  Thus, $L_{x'}$ is also a tree that is disjoint from $L_x$.  Now, assume the $t_j$ neighbors of $x'$ in $Y_j$ are $w_{j,1}, w_{j,2}, \hdots, w_{j, t_j}$.  The trees $\{L_{u_{j,k}},  k = 1, \hdots, t_j\} \cup \{L_{w_{j,k}},  k = 1, \hdots, t_j\}$ are all pair-wise cell-isomorphic for each $j$ because of the induction hypothesis.  We construct the isomorphism $\pi$ from $L_x$ to $L_{x'}$ as follows: Let $\pi(x) = x'$.  For $j = 1, \hdots, \ell$, for $k= 1, \hdots, t_j$, let $\pi(u_{j,k}) = \pi_{j,k}(u_{j,k})$ where $\pi_{j,k}$ is the cell-isomorphism from $L_{u_{j,k}}$ to $L_{w_{j,k}}$.  Since $x$ and $x'$ belong to the same cell of $\mathcal{P}_G$, it follows that $\pi$ defines an cell-isomorphism from $L_x$ to $L_{x'}$.  $\Box$

We have shown that the claim is true.  For $v \in R_i$, let $L_v$ be the {\it leg} rooted at $v$.  From the claim, we know that the legs of $J_i$ are pairwise disjoint trees that are cell-isomorphic to each other.
\end{proof}

Now that we know that $J_i$ is a jellyfish graph, we would like to make use of the results  in Section \ref{sec:prelim} on the distinguishing labelings and numbers of jellyfish graphs.   
Lemma \ref{lemmajellyfish} can be extended to characterize the $\mathcal{P}_G$-distinguishing labelings of $J_i$.  When $\phi$ is a labeling of $J_i$, we just have to modify the notion of its projection, $\phi_{proj}$, because $J_i$ is now a celled graph.   Here is the updated definition:   for any $v, w \in V(H)$, let $\phi_{proj}(v) = \phi_{proj}(w)$ if and only if there is a cell-isomorphism from $L_v$ to $L_w$ that preserves the labels of $\phi$.


\begin{lemma}
Let $J_i$ be the jellyfish graph obtained by performing the steps (M1), (M2) and (M3) on $G_i$.    A labeling $\phi$ of $J_i$ is $\mathcal{P}_G$-distinguishing if and only if (i) $\phi_v$ is a $\mathcal{P}_G$-distinguishing labeling of $L_v$ for each $v \in V(H)$ and (ii) $\phi_{proj}$ is a $\mathcal{P}_G$-distinguishing labeling of $H$. 
\label{lemmajellyfish-cell}
\end{lemma}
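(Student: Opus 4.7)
The plan is to mirror the proof of Lemma \ref{lemmajellyfish}, carrying along the extra bookkeeping that every automorphism we encounter or construct is cell-preserving with respect to $\mathcal{P}_G$. Since the only structural change from the non-celled setting is that (a) automorphisms of legs must be cell-preserving, (b) isomorphisms between legs used to define $\phi_{proj}$ are cell-isomorphisms, and (c) we work with $Aut(J_i,\mathcal{P}_G)$ throughout, each step of the earlier proof should go through essentially verbatim once we check these restrictions are respected at the gluing steps.

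For the ``only if'' direction I would argue the contrapositive in two parts. If some $\phi_v$ is not $\mathcal{P}_G$-distinguishing for $L_v$, a non-trivial cell-preserving automorphism $\pi_v$ of $(L_v,\phi_v)$ extends by the identity outside $L_v$ to a non-trivial automorphism of $(J_i,\phi)$ which is still cell-preserving since $\pi_v$ is. If instead $\phi_{proj}$ fails to $\mathcal{P}_G$-distinguish $H$, a non-trivial automorphism $\pi_H$ of $(H,\phi_{proj})$ gives, for every $v \in V(H)$, a cell-isomorphism $\tau_v : L_v \to L_{\pi_H(v)}$ preserving $\phi$ (possibly $\tau_v = \pi_v$ when $\pi_H(v) = v$). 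Gluing these $\tau_v$'s produces a non-trivial cell-preserving automorphism $\pi$ of $(J_i,\phi)$.

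For the converse, suppose (i) and (ii) hold and let $\pi \in Aut(J_i,\mathcal{P}_G)$ preserve $\phi$. Because $R_i$ is a single cell of $\mathcal{P}_G$, cell-preservation forces $\pi(R_i) = R_i$, so $\pi$ maps $H$ to itself. By the leg analysis in Theorem \ref{structjellyfish} the legs are precisely the connected components of $J_i$ obtained after deleting the head edges, so $\pi$ permutes the legs with $\pi(L_v) = L_{\pi(v)}$, and $\pi|_{L_v}$ is a cell-isomorphism preserving $\phi$. Therefore $\phi_{proj}(v) = \phi_{proj}(\pi(v))$ for every $v \in V(H)$, so $\pi|_{V(H)}$ is a cell-preserving automorphism of $(H,\phi_{proj})$ and is trivial by (ii). With $\pi$ pointwise fixing $V(H)$, each $\pi|_{L_v}$ is a cell-preserving automorphism of $(L_v,\phi_v)$ and is trivial by (i), whence $\pi = Id$.

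The main obstacle I expect is verifying the gluing step in the second half of the ``only if'' direction: one must check that each $\tau_v$ necessarily sends the root $v$ of $L_v$ to the root $\pi_H(v)$ of $L_{\pi_H(v)}$, so that the family $\{\tau_v\}$ glues consistently with $\pi_H$ on the head. This is forced by cell-preservation: $v$ and $\pi_H(v)$ are the unique vertices of $L_v$ and $L_{\pi_H(v)}$ lying in the cell $R_i$, since all other cells in the subtree $A_i$ are strict descendants of $R_i$. Once this observation is in place, the remaining adjacencies between the head and the legs are preserved automatically, and the rest of the argument is a routine transcription of the proof of Lemma \ref{lemmajellyfish}.
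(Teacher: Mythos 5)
Your proof is correct and takes essentially the same approach as the paper: the paper gives no separate argument for Lemma \ref{lemmajellyfish-cell}, only the remark that its proof is that of Lemma \ref{lemmajellyfish} with all automorphisms and isomorphisms required to be cell-preserving, which is exactly the transcription you carry out. Your added observation that cell-preservation forces each $\tau_v$ to send the root of $L_v$ to the root of $L_{\pi_H(v)}$ (the roots being the unique leg vertices lying in $R_i$) correctly settles the one gluing detail the paper leaves implicit.
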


The proof of Lemma \ref{lemmajellyfish2} is similar to that of Lemma \ref{lemmajellyfish} except that the automorphisms and isomorphisms are cell-preserving and the  labelings are destroying the cell-preserving automorphisms.

\begin{theorem}
Let $J_i$ be the jellyfish graph obtained by performing the steps (M1), (M2) and (M3) on $G_i$.  Suppose $H$ is its head and $L_v$ is one of its legs.  Then $$D(J_i, \mathcal{P}_G) = \min\{c: D((L_v, \mathcal{P}_G), c) \ge d^* \}$$ where $d^* = D(H)$. 
\label{distjellyfish-cell}
\end{theorem}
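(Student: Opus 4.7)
The plan is to adapt the proof of Theorem \ref{distjellyfish} to the celled setting, replacing Lemma \ref{lemmajellyfish} with its celled counterpart Lemma \ref{lemmajellyfish-cell}. The key preliminary observation is that the head $H$ equals $J_i[R_i]$, so every vertex of $H$ belongs to the single cell $R_i$ of $\mathcal{P}_G$; consequently every automorphism of $H$ is automatically cell-preserving, giving $Aut(H) = Aut(H, \mathcal{P}_G)$ and hence $D(H) = D(H, \mathcal{P}_G) = d^*$. This lets me treat ``distinguishing'' and ``$\mathcal{P}_G$-distinguishing'' for $H$ interchangeably.

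For the upper bound, set $c_0 = \min\{c : D((L_v, \mathcal{P}_G), c) \ge d^*\}$ and fix $d^*$ pairwise cell-inequivalent $\mathcal{P}_G$-distinguishing labelings $\psi_1, \dots, \psi_{d^*}$ of the template leg $L_{v_0}$ (for some fixed $v_0 \in R_i$), each using at most $c_0$ colors, along with a distinguishing labeling $\phi_H$ of $H$ using $d^*$ colors, whose color classes are $V_1, \dots, V_{d^*}$. By Theorem \ref{structjellyfish}, for each $w \in R_i$ there is a cell-isomorphism $\sigma_w : L_{v_0} \to L_w$; I will define the labeling $\phi$ of $J_i$ by transporting $\psi_j$ across $\sigma_w$ whenever $w \in V_j$. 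By construction, each $\phi_w$ is a $\mathcal{P}_G$-distinguishing labeling of $L_w$, and the projection $\phi_{proj}$ agrees with $\phi_H$ on how it partitions $R_i$ (it refines it at worst), so $\phi_{proj}$ is a distinguishing labeling of $H$. Lemma \ref{lemmajellyfish-cell} then yields $D(J_i, \mathcal{P}_G) \le c_0$.

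For the lower bound, let $\phi$ be any $\mathcal{P}_G$-distinguishing labeling of $J_i$ using $c$ colors. Lemma \ref{lemmajellyfish-cell} gives that each $\phi_w$ is a $\mathcal{P}_G$-distinguishing labeling of $L_w$ and that $\phi_{proj}$ is a $\mathcal{P}_G$-distinguishing labeling of $H$. Because $D(H) = d^*$, the projection $\phi_{proj}$ must take at least $d^*$ distinct values, so by the definition of $\phi_{proj}$ the collection $\{(L_w, \phi_w) : w \in R_i\}$ contains at least $d^*$ pairwise cell-non-isomorphic labeled legs. Pulling these back through the cell-isomorphisms $\sigma_w$ of Theorem \ref{structjellyfish} produces $d^*$ pairwise cell-inequivalent $\mathcal{P}_G$-distinguishing labelings of $L_{v_0}$ using at most $c$ colors, which forces $D((L_{v_0}, \mathcal{P}_G), c) \ge d^*$ and therefore $c \ge c_0$.

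The main subtlety, which is the step I expect to need the most care, is the bookkeeping between ``equivalence of leg labelings on different legs'' and ``$\mathcal{P}_G$-equivalence of labelings on a single template leg.'' Since the legs are only cell-isomorphic and not identical, I have to argue that transporting labelings via any choice of cell-isomorphism preserves both the $\mathcal{P}_G$-distinguishing property and the cell-equivalence classes; once this correspondence is verified (using Theorem \ref{structjellyfish} in both directions), the counting argument on $D((L_v, \mathcal{P}_G), c)$ transfers cleanly across the legs and the two inequalities combine to give the claimed equality.
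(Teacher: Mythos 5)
Your proposal is correct and follows essentially the same route as the paper, which proves this theorem by mirroring the proof of Theorem \ref{distjellyfish} via Lemma \ref{lemmajellyfish-cell} and the observation that $H = J_i[R_i]$ lies in a single cell, so $D(H,\mathcal{P}_G) = D(H)$. Your extra care in transporting labelings across the cell-isomorphisms between legs is a detail the paper leaves implicit, but it does not change the argument.
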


Again, the proof of Theorem \ref{distjellyfish-cell} follows that of Theorem \ref{distjellyfish}.  We note that $D(H, \mathcal{P}_G) = D(H)$ because $H = J_i[R_i]$; i.e., all the vertices in $H$ are part of the cell $R_i$ so all the automorphisms of $H$ are naturally cell-preserving.
\medskip



We now show how to compute $D((L_v, \mathcal{P}_G), c)$ directly from the anisotropic component $A_i$. Recall that $L_v$, a leg of $J_i$,  is a rooted tree.  
Let $x$ be one of its vertices and $L_x$ be the subtree rooted at $x$.  We want to  compute $D((L_x, \mathcal{P}_G), c)$ recursively using Theorem \ref{disttree}.  When $x$ is a leaf, 
$D((L_x, \mathcal{P}_G), c) = c$.   When $x$ is not a leaf, let $X$ be the cell that contains $x$.   The children of $x$ lie in the cells that are the children of $X$.   Let the latter be $Y_1, Y_2, \hdots, Y_\ell$.  
From Theorem \ref{disttree},  the set $\mathcal{T}$ contains the subtrees of $L_x$ rooted at the children of $x$. We then need to separate these subtrees into isomorphism classes.  But because we are dealing with celled trees, we need to separate the subtrees into cell-isomorphism classes -- i.e., two subtrees are in the same class if and only if they are cell-isomorphic.  It turns out that $A_i$ provides the cell-isomorphism classes  for free.


When $y, y'$ are children of $x$, the subtrees $L_y$ and $L_{y'}$ are cell-isomorphic if and only if $y$ and $y'$ are from the same cell $Y_j$.   The forward direction is obvious; the backward direction is from the proof of Theorem \ref{structjellyfish}.  Thus,  
$$\mathcal{T} = m_1 L_{y_1} \cup m_2 L_{y_2} \cup \cdots \cup m_\ell L_{y_\ell}$$
where $y_j \in Y_j$ and $m_j = |Y_j|/|X|$ for $j = 1, \hdots, \ell$.  The value of $m_j$ follows from the fact that the bipartite graph induced by $J_i[X, Y_j]$ is of the form $sK_{1,t}$ so every vertex in $X$ has $ |Y_j|/|X|$ neighbors in $Y_j$.   Applying Theorem \ref{disttree}, 
\begin{equation}
\label{newformula}
D((L_x, \mathcal{P}_G),c) = c \prod_{j = 1}^\ell \binom{D((L_{y_j}, \mathcal{P}_G),c)}{m_j}. 
\end{equation}
Given $A_i$ and the sizes of the cells in $A_i$, here now is the algorithm for computing $D((L_v, \mathcal{P}_G), c)$:  
Run a postorder traversal on $A_i$.  At each cell $X$, compute $D((L_x, \mathcal{P}_G), c)$ where $x \in X$.   Thus, when $X$ is a leaf,  set $D((L_x, \mathcal{P}_G), c) = c$, the base case.   When $X$ has $Y_1, Y_2, \hdots, Y_\ell$ as its children,  compute $m_j = |Y_j|/|X|$ for each $j$.  Then apply equation (\ref{newformula}) to obtain $D((L_x, \mathcal{P}_G),c)$.  Eventually, when the postorder traversal makes its way back to $R_i$, the result will be $D((L_v, \mathcal{P}_G), c)$. Since time spent at each vertex $X$ of $A_i$ is linear in the number of children of $X$, it follows that the postorder traversal runs in time linear in the size of $A_i$.



Consider the anisotropic component $A_i$ from Figure \ref{fig:examplewithleg}.  The computation for $ D((L_v, \mathcal{P}_G), 3)$ is shown in Figure \ref{fig:exampleAi}. 
When we run a post-order traversal on $A_i$, the cells are processed in the following order:  $Y_1, Y_2, X_1, X_2, Z_1, X_3, R_i$.  Let $c = 3$.  The numbers on the tree on the right equal $ D((L_x, \mathcal{P}_G), 3)$ for $x \in X$.  They were computed using the formula in (\ref{newformula}) with the base cases  set to $3$.


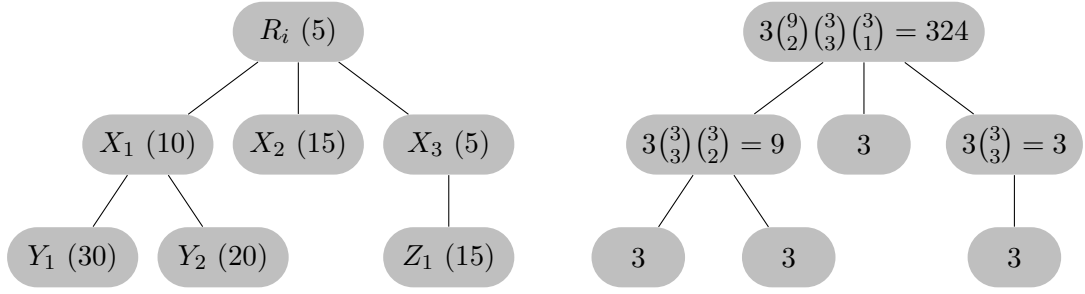
\begin{figure}

\centering
\begin{tikzpicture}[
  every node/.style={draw=none, rounded rectangle, minimum width=2cm, minimum height=0.8cm},
  level distance=1.5cm,
  sibling distance=2cm
  ]

  \node[fill=lightgray] {$R_i$ (5)}
    child {node[fill=lightgray] {$X_1$ (10)}
    	child {node[fill=lightgray] {$Y_1$ (30)}}
	child {node[fill=lightgray] {$Y_2$ (20)}}
	}
    child {node[fill=lightgray] {$X_2$ (15)}}
    child {node[fill=lightgray] {$X_3$ (5)}
        child {node[fill=lightgray] {$Z_1$ (15)}}
      };
\end{tikzpicture}
\hspace*{2 em}
\begin{tikzpicture}[
  every node/.style={draw=none, rounded rectangle, minimum width=1.5cm, minimum height=0.8cm},
  level distance=1.5cm,
  sibling distance=2cm
  ]

  \node[fill=lightgray] {$3 \binom{9}{2} \binom{3}{3} \binom{3}{1} = 324$}
    child {node[fill=lightgray] {$3 \binom{3}{3} \binom{3}{2} = 9$}
    	child {node[fill=lightgray] { $3$}}
	child {node[fill=lightgray] { $3$}}
	}
    child {node[fill=lightgray] {$3$}}
    child {node[fill=lightgray] {$3 \binom{3}{3} = 3$}
        child {node[fill=lightgray] {$3$}}
      };
\end{tikzpicture}
\caption{Continuing with the anisotropic component $A_i$ from Figure \ref{fig:examplewithleg}, the tree on the right shows how to compute $ D((L_v, \mathcal{P}_G), 3)$ bottom-up using equation (\ref{newformula}).}
\label{fig:exampleAi}
\end{figure}


\begin{theorem}
 Let $G$ be an amenable graph.  The distinguishing number of $G$ can be computed in $O((n+m) \log n)$ time where $n = |V(G)|$ and $m = |E(G)|$. 
 \label{maindistthm}
 \end{theorem}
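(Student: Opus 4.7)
The plan is to assemble the machinery of Sections 3 and 4 into a concrete algorithm. First, by Theorem \ref{mainchar}, we can recognize $G$ as amenable while simultaneously extracting its stable partition $\mathcal{P}_G$, its cell graph $C(G)$, and the anisotropic components $A_1, \ldots, A_k$ in $O((n+m)\log n)$ time; for each $A_i$ we also record its root $R_i$ (which exists by Lemma \ref{global}), the parent--child relations of the cells, and their cardinalities. Combining Theorems \ref{mainthm1}, \ref{thmconvert}, and \ref{distjellyfish-cell} gives
$$D(G) \;=\; \max_{1 \le i \le k}\; \min\{\,c \ge 1 : D((L_{v_i}, \mathcal{P}_G), c) \ge D(H_i)\,\},$$
where $v_i \in R_i$ and $H_i = J_i[R_i]$ is the head of the jellyfish graph $J_i$. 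So the task reduces to computing $D(H_i)$ and the inner threshold $c$ for each component.

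After modification (M1), $H_i$ is $C_5$, a complete graph $K_{|R_i|}$, or the complement $\overline{rK_2}$ of a matching (with $2r = |R_i|$). By Lemma \ref{lemmasameaut1a}, $D(\overline{rK_2}) = D(rK_2) = \min\{c : \binom{c}{2} \ge r\}$, so $D(H_i)$ can be obtained in $O(|R_i|)$ time and is bounded above by $n$. To find the inner threshold, note that $D((L_{v_i}, \mathcal{P}_G), c)$ is monotone non-decreasing in $c$, and that $c = n$ is always feasible because for $c \ge \max\{m_j\}$ over all parent--child pairs in $A_i$ the recurrence (\ref{newformula}) stays strictly positive, and the root factor is $c \ge n \ge D(H_i)$. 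Hence we binary search for the threshold on $c \in [1,n]$. Each candidate $c$ is tested by one post-order traversal of $A_i$ that applies (\ref{newformula}) with the base value $c$ at leaves and with every intermediate value truncated at $D(H_i) + 1 \le n+1$, costing $O(|A_i|)$ as promised at the end of Section 4.

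Totalling the costs: preprocessing is $O((n+m)\log n)$, head computations sum to $O(\sum_i |R_i|) = O(n)$, and the binary searches sum to $\sum_i O(|A_i| \log n) = O(n \log n)$ since $\sum_i |A_i| \le |\mathcal{P}_G| \le n$. Thus the total running time is $O((n+m)\log n)$, and finally we output the maximum of the per-component thresholds. The one delicate point, and the main obstacle to proving the running time, is the arithmetic in (\ref{newformula}): since the raw values of $D((L_x, \mathcal{P}_G), c)$ can be super-exponential in $n$, each running product and each binomial $\binom{a}{m_j}$ must be evaluated by an early-terminating loop that halts the moment the partial value exceeds the cap $D(H_i)+1$. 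With this truncation in place, the per-cell work stays proportional to the number of children of the cell and the overall $O((n+m)\log n)$ bound goes through.
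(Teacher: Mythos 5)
Your proof follows essentially the same route as the paper: decompose via the anisotropic components (Theorem \ref{mainthm1}), reduce each $G_i$ to the celled jellyfish graph $J_i$ (Theorem \ref{thmconvert}), determine $D(H_i)$ from the type and size of the root cell, evaluate $D((L_v,\mathcal{P}_G),c)$ by a post-order traversal of $A_i$ using equation (\ref{newformula}), and binary search over $c\in[1,n]$ for the threshold. Your added observation that the intermediate values in (\ref{newformula}) can be super-exponential and must be capped at $D(H_i)+1$ is a correct refinement that the paper leaves implicit, and it is in fact needed to justify the claimed linear cost of each traversal under a realistic arithmetic model.
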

 
 \begin{proof}
 To compute $D(G)$, we first have to identify the anisotropic components of $G$: $A_1, A_2, \hdots, A_k$.  Arvind et al.~described how to do this step  in $O((n+m) \log n)$ time in \cite{Arvind2017GraphIsomorphism} when they proved that amenable graphs can be recognized in the said amount of time.   Let $\mathcal{P}_G = \{X_1, X_2, \hdots, X_k\}$ be the stable partition of $G$.  They kept track of (i) $|X_i|$, the size of $X_i$, for each $i$  and (ii) $d_{i,j}$, the number of neighbors a vertex in $X_i$ has in $X_j$,  for every $i,j$.  If $d_{i,i} = 0$ or $|X_i| - 1$,  $X_i$ is a homogeneous cell; otherwise it is heterogeneous.   Similarly, if $d_{i,j} = 0$ or $|X_j|$, the edge $X_iX_j$ is isotropic; otherwise it is anisotropic.   Running BFS on $C(G)$ allowed them to identify the anisotropic components of $G$ and mark an appropriate root for each component.

 Next, let us compute $D(G_i, \mathcal{P}_G)$  for $i = 1, \hdots, k$.  By Theorem \ref{thmconvert},  $D(G_i, \mathcal{P}_G) = D(J_i, \mathcal{P}_G)$;  Theorem \ref{distjellyfish-cell} provides a formula for $D(J_i, \mathcal{P}_G)$.   There are two parts to this formula -- the computation of $D(H)$ and $D((L_v, \mathcal{P}_G),c)$ where $H$ is the head of $J_i$ and $L_v$ is one of its legs.  We already showed   that the latter can be obtained by running a post-order traversal on $A_i$ in time linear in the size of $A_i$. 
 So let us now focus on $D(H)$.  
 
 We know that $H = J_i[R_i]$.  The type of cell $R_i$ is and its size allows us to determine $D(H)$.  If $R_i$ is a homogeneous vertex, then $J_i[R_i]$ is a complete graph so $D(H) = |R_i|$.  Otherwise, $R_i$ is a heterogeneous vertex.  If $|R_i| = 5$, then $J_i[R_i]$ is a 5-cycle so $D(H) = 3$.  If $|R_i|$ is even, then $J_i[R_i]$ is the complement of a matching graph $rK_2$ with $r = |R_i|/2$.  But $D(\overline{rK_2}) = D(rK_2) = \min \{c: \binom{c}{2} \ge r\}$ and can be computed in $O(\sqrt{r})$ time (see appendix of \cite{Ch25}).  Thus, given $A_i$,  we can obtain both $D(H)$ and $D((L_v, \mathcal{P}_G),c)$ in time linear in the size of $G_i$.  Finally, we can use binary search over the range $[1, n]$ to find the minimum $c$ so that  $D((L_v, \mathcal{P}_G),c) \ge D(H)$.  Thus,  $D(G_i, \mathcal{P}_G)$ can be computed in $O(n_i \log n_i)$ time where $n_i = |V(G_i)|$.  
 
 By Theorem \ref{mainthm1}, $D(G) = \max\{ D(G_i, \mathcal{P}_G), i = 1, \hdots, k\}$.  Hence, from identifying the anisotropic components of $G$ to computing $D(G_i, \mathcal{P}_G)$ for $i = 1, \hdots, k$ and returning their maximum, $D(G)$ for amenable graphs can be obtained in $O((n+m) \log n)$ time. 
 \end{proof}


\section{The Fixing Numbers of Amenable Graphs}

    We now consider the problem of computing the fixing number of an amenable graph.  Most of the work entails applying the previous sections' results on distinguishing labelings and numbers  to fixing sets and numbers. 
    
When $S \subseteq V(G)$ and $\mathcal{P}$ is a partition of $V(G)$,  we say that $S$ is a {\it $\mathcal{P}$-fixing set} of $G$ if the only automorphism $\pi \in Aut(G, \mathcal{P})$ such that $\pi(s) = s$ for every $s \in S$ is the identity map.  The parameter $Fix(G, \mathcal{P})$ is the size of the smallest $\mathcal{P}$-fixing set of $G$.  The following should be obvious.

\begin{proposition}
Let $S \subseteq V(G)$ and $\mathcal{P}$ be a partition of $V(G)$.  Let $\phi_S$ be the vertex labeling that assigns distinct colors to the vertices in $S$ and another color (the ``null" color) to the vertices not in $S$.  Then $S$ is a $\mathcal{P}$-fixing set of $G$ if and only if $\phi_S$ is a $\mathcal{P}$-distinguishing labeling of $G$. 
\label{proptranslate2} 
\end{proposition}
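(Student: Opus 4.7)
The plan is to mimic the proof of Proposition \ref{proptranslate} almost verbatim, observing that restricting from $Aut(G)$ to $Aut(G, \mathcal{P})$ does not affect the combinatorial argument at all. The key observation is that an automorphism $\pi$ is broken by $\phi_S$ (in the distinguishing sense) if and only if $\pi$ moves some vertex of $S$ (in the fixing sense), and this equivalence is oblivious to whether we quantify over all of $Aut(G)$ or only over its cell-preserving subgroup $Aut(G, \mathcal{P})$.

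For the forward direction, I would assume $S$ is a $\mathcal{P}$-fixing set and take an arbitrary non-trivial $\pi \in Aut(G, \mathcal{P})$. By the fixing assumption there is some $s \in S$ with $\pi(s) \neq s$. Since $s$ carries a unique non-null color under $\phi_S$, whatever vertex $\pi(s)$ is, it carries a different color (either another distinct color if $\pi(s) \in S$, or the null color if $\pi(s) \notin S$), so $\phi_S(s) \neq \phi_S(\pi(s))$ and $\phi_S$ breaks $\pi$. Hence $\phi_S$ is $\mathcal{P}$-distinguishing.

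For the converse, I would assume $\phi_S$ is $\mathcal{P}$-distinguishing and pick any $\pi \in Aut(G, \mathcal{P})$ fixing every element of $S$ pointwise. If $\pi$ were non-trivial, the $\mathcal{P}$-distinguishing property gives some $v$ with $\phi_S(v) \neq \phi_S(\pi(v))$. Case analysis on $v \in S$ vs.\ $v \notin S$ (and similarly for $\pi(v)$) forces a contradiction: the $v \in S$ case contradicts $\pi(s)=s$ directly, and the $v \notin S$ with $\pi(v) \in S$ case yields $\pi(\pi(v))=\pi(v)$ from the fixing assumption, whence injectivity of $\pi$ gives $\pi(v)=v$, contradicting the color disagreement. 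Thus $\pi = Id$, and $S$ is a $\mathcal{P}$-fixing set.

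I do not anticipate any real obstacle here; the only thing to be careful about is that the two notions of ``breaking'' are quantified over the same set of automorphisms on both sides of the equivalence, which is automatic because $\mathcal{P}$ is fixed throughout. The proposition is essentially a bookkeeping restatement of Proposition \ref{proptranslate} in the celled-graph setting, and it is needed only to transfer the machinery of Section 3 into fixing-set language for the next section.
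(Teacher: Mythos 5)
Your proof is correct and is exactly the argument the paper has in mind; the paper simply omits it, remarking that the proposition ``should be obvious'' as a direct analogue of Proposition~\ref{proptranslate}. Both directions check out (including the injectivity step in the $v \notin S$, $\pi(v) \in S$ case), so this is a sound filling-in of the omitted details rather than a different route.
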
 

The counterparts of Lemmas \ref{lemmasameaut1a} and \ref{lemmasameaut1b}   for fixing numbers can be obtained by converting fixing sets to distinguishing labelings as described in Propositions \ref{proptranslate} and \ref{proptranslate2}.

\begin{lemma}
Let $G$ and $H$ be graphs with $V(G) = V(H)$ and $Aut(G) = Aut(H)$.  Then every fixing set of $G$ is a fixing set of $H$ and vice versa so $Fix(G) = Fix(H)$.
\label{lemmasameaut2a}
\end{lemma}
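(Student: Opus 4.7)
The plan is to observe that the definition of a fixing set depends on the automorphism group of the graph only through the condition ``no non-identity element of $Aut(\cdot)$ fixes every vertex of $S$ pointwise.'' Since the hypothesis $Aut(G) = Aut(H)$ gives us identical automorphism groups, I expect the argument to be essentially a one-line unpacking of the definition.

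Concretely, I would first fix an arbitrary $S \subseteq V(G) = V(H)$ and define the pointwise stabilizer $\mathrm{Stab}(S) = \{\pi : \pi(s) = s \text{ for all } s \in S\}$, which is a condition on a permutation of $V(G) = V(H)$ that is independent of the edge set. Then the set of automorphisms of $G$ stabilizing $S$ pointwise is $Aut(G) \cap \mathrm{Stab}(S)$, and similarly for $H$ it is $Aut(H) \cap \mathrm{Stab}(S)$; because $Aut(G) = Aut(H)$ by hypothesis, these two intersections are literally the same set. Hence one equals $\{Id\}$ if and only if the other does, which means $S$ is a fixing set of $G$ if and only if $S$ is a fixing set of $H$.

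Taking the minimum over all such $S$ then immediately yields $Fix(G) = Fix(H)$. Alternatively, and in the spirit of the paragraph preceding the lemma, I could route the argument through distinguishing labelings: by Proposition \ref{proptranslate}, $S$ fixes $G$ iff $\phi_S$ is a distinguishing labeling of $G$; by Lemma \ref{lemmasameaut1a}, $\phi_S$ distinguishes $G$ iff it distinguishes $H$; and by Proposition \ref{proptranslate} applied to $H$, the latter holds iff $S$ fixes $H$. Either route gives the same conclusion.

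There is no real obstacle here — the lemma is a direct analogue of Lemma \ref{lemmasameaut1a} and the only thing to verify is that both the definition of ``fixing set'' and the assignment $S \mapsto \phi_S$ are blind to the underlying edge set once $V(G) = V(H)$ and $Aut(G) = Aut(H)$ are granted. The most one has to be careful about is to note that the ``null'' color used in $\phi_S$ is common to $G$ and $H$, so that the labeling $\phi_S$ is well-defined and identical on both graphs.
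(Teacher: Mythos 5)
Your proof is correct, and your second route (converting $S$ to $\phi_S$ via Proposition \ref{proptranslate} and invoking Lemma \ref{lemmasameaut1a}) is exactly the argument the paper sketches for this lemma; your direct stabilizer-intersection argument is an equally valid, slightly more self-contained alternative.
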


\begin{lemma}
Let $G$ and $H$ be graphs such that $V(G) = V(H)$.  Let $\mathcal{P}$ be a partition of $V(G)$.   If $Aut(G, \mathcal{P}) =   Aut(H, \mathcal{P})$, then every $\mathcal{P}$-fixing set of $G$ is also a $\mathcal{P}$-fixing set of $H$ and vice versa so $Fix(G, \mathcal{P}) =  Fix(H, \mathcal{P})$.
\label{lemmasameaut2b}
\end{lemma}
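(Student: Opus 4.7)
The plan is to give a short proof that mirrors Lemma \ref{lemmasameaut1b}, with the key conceptual point being that the definition of a $\mathcal{P}$-fixing set depends on $G$ only through the group $Aut(G,\mathcal{P})$ and the vertex set $V(G)$. Once these two pieces of data agree for $G$ and $H$, the property of being a $\mathcal{P}$-fixing set is literally the same condition on $S$.

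I would first give a direct argument. Let $S \subseteq V(G) = V(H)$ be a $\mathcal{P}$-fixing set of $G$, and let $\pi \in Aut(H,\mathcal{P})$ satisfy $\pi(s) = s$ for every $s \in S$. By the hypothesis $Aut(H,\mathcal{P}) = Aut(G,\mathcal{P})$, so $\pi \in Aut(G,\mathcal{P})$ as well; since $S$ is $\mathcal{P}$-fixing for $G$, this forces $\pi = Id$. Hence $S$ is a $\mathcal{P}$-fixing set of $H$. The converse is symmetric. Thus $G$ and $H$ have exactly the same $\mathcal{P}$-fixing sets, and in particular $Fix(G,\mathcal{P}) = Fix(H,\mathcal{P})$.

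Alternatively, I would route the proof through the distinguishing-labeling version already on hand, which is the style the paper has been following. By Proposition \ref{proptranslate2}, $S$ is a $\mathcal{P}$-fixing set of $G$ if and only if $\phi_S$ is a $\mathcal{P}$-distinguishing labeling of $G$; by Lemma \ref{lemmasameaut1b}, this holds if and only if $\phi_S$ is a $\mathcal{P}$-distinguishing labeling of $H$; and by Proposition \ref{proptranslate2} applied to $H$, this holds if and only if $S$ is a $\mathcal{P}$-fixing set of $H$. Chaining these equivalences gives the conclusion, and this framing keeps the section's narrative (translate fixing statements into labeling statements) visible.

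There is essentially no obstacle here; the statement is formal and immediate from the definitions. The only subtlety worth flagging explicitly is that the identity maps of $Aut(G,\mathcal{P})$ and $Aut(H,\mathcal{P})$ coincide as functions, which is exactly why the hypothesis $V(G) = V(H)$ is needed in addition to $Aut(G,\mathcal{P}) = Aut(H,\mathcal{P})$. With that pointed out, either version of the argument fits in a few lines.
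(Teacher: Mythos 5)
Your proposal is correct, and your second route (translating $S$ to $\phi_S$ via Proposition \ref{proptranslate2} and invoking Lemma \ref{lemmasameaut1b}) is exactly the argument the paper intends, which it states in one line just before the lemma. Your direct argument is also valid and arguably even more elementary, since the property of being a $\mathcal{P}$-fixing set depends only on $V(G)$ and $Aut(G,\mathcal{P})$; either version suffices.
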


We now prove the first main result about the fixing sets of amenable graphs. 

\begin{theorem}
\label{mainthm1b}
Let $G$ be an amenable graph whose anisotropic components in $C(G)$ are $A_1, A_2, \hdots, A_k$.   
For each $i$, let $G_i = G[\cup_{X \in A_i} X]$,  the  subgraph induced by the vertices in the cells of $A_i$.  
Let $S \subseteq V(G)$ and $S_i = S \cap V(G_i)$ for $i = 1, \hdots, k$.   Then $S$ is a fixing set of $G$ if and only if $S_i$ is a $\mathcal{P}_G$-fixing set of $G_i$ for $i = 1, \hdots, k$.  Consequently, $$Fix(G) = \sum_{i=1}^k Fix(G_i, \mathcal{P}_G).$$
\end{theorem}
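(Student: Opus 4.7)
The plan is to follow the exact template set by Theorem \ref{mainthm1} for distinguishing labelings, replacing ``preserves the labels of $\phi$" with ``fixes every vertex of $S$" throughout. The two main ingredients are Lemma \ref{autprod}, which gives the direct product decomposition $Aut(G) = \prod_{i=1}^k Aut(G_i, \mathcal{P}_G)$, and the observation (via Corollary \ref{cor1}) that every automorphism of $G$ is cell-preserving with respect to $\mathcal{P}_G$.

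For the forward direction, assume $S$ is a fixing set of $G$ but some $S_i$ fails to be a $\mathcal{P}_G$-fixing set of $G_i$. Then there is a non-trivial $\tau_i \in Aut(G_i, \mathcal{P}_G)$ with $\tau_i(s) = s$ for every $s \in S_i$. Since each $Id_j$ (the identity map on $V(G_j)$) lies in $Aut(G_j, \mathcal{P}_G)$, Lemma \ref{autprod} lets us assemble $\pi = (Id_1, \ldots, Id_{i-1}, \tau_i, Id_{i+1}, \ldots, Id_k)$, a non-trivial automorphism of $G$. Because the $V(G_j)$'s partition $V(G)$, $\pi$ fixes every vertex of $S$, contradicting that $S$ fixes $G$.

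For the converse, assume each $S_i$ is a $\mathcal{P}_G$-fixing set of $G_i$, and let $\pi \in Aut(G)$ fix every vertex of $S$. By Lemma \ref{autprod}, $\pi = (\pi_1, \ldots, \pi_k)$ with $\pi_i \in Aut(G_i, \mathcal{P}_G)$. Since $V(G_i) \cap S = S_i$, each $\pi_i$ fixes every vertex in $S_i$, so each $\pi_i$ must be trivial by hypothesis. Thus $\pi = Id$ and $S$ fixes $G$.

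Finally, the formula $Fix(G) = \sum_{i=1}^k Fix(G_i, \mathcal{P}_G)$ follows from the biconditional combined with the fact that the $V(G_i)$'s partition $V(G)$, so $|S| = \sum_i |S_i|$. The upper bound is obtained by choosing, for each $i$, a minimum $\mathcal{P}_G$-fixing set $S_i^*$ of $G_i$ and taking $S = \bigcup_i S_i^*$; the lower bound is obtained by taking a minimum fixing set $S$ of $G$, noting that each $S_i = S \cap V(G_i)$ is a $\mathcal{P}_G$-fixing set of $G_i$ (hence $|S_i| \geq Fix(G_i, \mathcal{P}_G)$), and summing. No step here is particularly delicate; the one point requiring care is just to invoke Lemma \ref{autprod} in the form asserted in the excerpt (with cell-preservation built in), since otherwise the extension of $\tau_i$ by identities in the other coordinates is not obviously a valid automorphism of the full graph $G$.
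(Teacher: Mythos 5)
Your proof is correct and follows essentially the same route as the paper: the paper simply cites the argument of Theorem \ref{mainthm1} and ``translates'' it to fixing sets, while you carry out that translation explicitly using the same key ingredient, the direct product decomposition of Lemma \ref{autprod}, together with the additivity $|S| = \sum_i |S_i|$. No substantive difference.
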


\begin{proof}
The proof of Theorem \ref{mainthm1} showed that a labeling $\phi$ of $G$ is distinguishing if and only if  $\phi_i$ is a  $\mathcal{P}_G$-distinguishing for $G_i$, for each $i$.  Translating the result to fixing sets, it implies that a subset $S$  is a fixing set of $G$ if and only if $S_i$ is a $\mathcal{P}_G$-fixing set of $G_i$ for each $i$.   Now $S$ is the disjoint union of the $S_i$'s  so $|S| = \sum_{i=1}^k |S_i|$.  To minimize $|S|$, we should minimize each $|S_i|$.  It follows that $Fix(G) = \sum_{i=1}^k Fix(G_i, \mathcal{P}_G)$.
\end{proof}

Thus, to compute $Fix(G)$, we need to determine $F(G_i, \mathcal{P}_G)$ for each $i$.  But as we  noted earlier,  $G_i$ seems to have a complicated structure so we modify $G_i$ to create $J_i$. 

\begin{lemma}
Let $J_i$ be the graph obtained by performing the modifications (M1), (M2) and (M3) on $G_i$.  Then  $Fix(G_i, \mathcal{P}_G) = Fix(J_i, \mathcal{P}_G)$. 
\label{lemmaconvert2}
\end{lemma}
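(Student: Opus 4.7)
The plan is to reduce this statement to the analogous distinguishing-number result via the automorphism-group equality that was established in Theorem \ref{thmconvert}. Specifically, the proof of Theorem \ref{thmconvert} already shows that the modifications (M1), (M2), and (M3) each fall under the hypotheses of Lemma \ref{lemmamodif1} or Lemma \ref{lemmamodif2}, so each modification preserves the cell-preserving automorphism group. Chaining these equalities through the sequence of intermediate graphs between $G_i$ and $J_i$ gives $Aut(G_i, \mathcal{P}_G) = Aut(J_i, \mathcal{P}_G)$.

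Given that equality, the lemma follows immediately from Lemma \ref{lemmasameaut2b}: since $V(G_i) = V(J_i)$, the same underlying vertex set is partitioned by $\mathcal{P}_G$ in both graphs, and the cell-preserving automorphism groups coincide, so the two graphs have identical $\mathcal{P}_G$-fixing sets. In particular, the smallest such set has the same size in each, which is precisely $Fix(G_i, \mathcal{P}_G) = Fix(J_i, \mathcal{P}_G)$.

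There is no real obstacle here; the work was done in proving Theorem \ref{thmconvert} and in setting up the ``sameness of automorphisms implies sameness of fixing sets'' principle (Lemma \ref{lemmasameaut2b}), which is the fixing-number analogue of Lemma \ref{lemmasameaut1b}. The only thing to be slightly careful about is that Lemma \ref{lemmasameaut2b} is stated for $\mathcal{P}$-fixing sets with $\mathcal{P}$ a partition of the common vertex set, which matches our situation exactly once we take $\mathcal{P}$ to be $\mathcal{P}_G$ restricted to $V(G_i) = V(J_i)$ (or equivalently, view both $G_i$ and $J_i$ as celled graphs under $\mathcal{P}_G$, consistent with how they are treated throughout this section).
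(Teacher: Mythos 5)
Your proof is correct and follows exactly the route the paper takes: invoke Theorem \ref{thmconvert} to get $Aut(G_i, \mathcal{P}_G) = Aut(J_i, \mathcal{P}_G)$, then apply Lemma \ref{lemmasameaut2b}. The re-derivation of the automorphism equality via Lemmas \ref{lemmamodif1} and \ref{lemmamodif2} is harmless but redundant, since Theorem \ref{thmconvert} already supplies it.
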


\begin{proof}
From Theorem \ref{thmconvert}, we already know  that $Aut(G_i, \mathcal{P}_G) = Aut(J_i, \mathcal{P}_G)$.  By Lemma \ref{lemmasameaut2b},  $Fix(G_i, \mathcal{P}_G) = Fix(J_i, \mathcal{P}_G)$. 
\end{proof}

We have shown that $J_i$ is a jellyfish graph so we shall take advantage of our earlier results on the fixing sets and fixing numbers of jellyfish graphs.  Note that unlike $\phi_{proj}$, we do not have to modify the definition for $S_{proj}$.  
 
 \begin{lemma}
Let $J_i$ be the jellyfish graph obtained by performing the steps (M1), (M2) and (M3) on $G_i$.  A subset $S$ of $V(J_i)$ is a $\mathcal{P}_G$-fixing set if and only if  (i) $S_v$ is a $\mathcal{P}_G$-fixing set of $L_v$ for each $v \in V(H)$ and (ii) $S_{proj}$ is a $\mathcal{P}_G$-fixing set of $H$. 
\label{lemmajellyfish3}
\end{lemma}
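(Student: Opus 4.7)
The plan is to follow the same template that was used for the non-celled version (Lemma \ref{lemmajellyfish2}): translate the fixing set $S$ into the associated labeling $\phi_S$ and invoke the celled analogue of the jellyfish characterization, Lemma \ref{lemmajellyfish-cell}. By Proposition \ref{proptranslate2}, $S$ is a $\mathcal{P}_G$-fixing set of $J_i$ if and only if $\phi_S$ is a $\mathcal{P}_G$-distinguishing labeling of $J_i$, so the task reduces to matching the two conditions coming from Lemma \ref{lemmajellyfish-cell} against the two conditions stated in the lemma.

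For condition (i), the key observation is simply that restriction commutes with the $\phi_S$ construction: if one restricts $\phi_S$ to the vertex set of the leg $L_v$, one obtains precisely $\phi_{S_v}$, the labeling that individualizes the vertices of $S_v$ inside $L_v$ and assigns the null color to the rest. Hence $(\phi_S)_v$ is $\mathcal{P}_G$-distinguishing for $L_v$ if and only if $S_v$ is a $\mathcal{P}_G$-fixing set of $L_v$, again by Proposition \ref{proptranslate2}.

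Condition (ii) is where the main subtlety lies, and I expect this to be the only place where any genuine argument is needed. I must show that $(\phi_S)_{proj}$ is a $\mathcal{P}_G$-distinguishing labeling of $H$ if and only if $S_{proj}$ is a $\mathcal{P}_G$-fixing set of $H$. The plan is to prove that, up to a renaming of colors, $(\phi_S)_{proj}$ coincides with $\phi_{S_{proj}}$. To see this, recall that $\phi_S$ assigns pairwise distinct non-null colors to the vertices of $S$. So given two legs $L_v$ and $L_w$ with $v \neq w$, a cell-isomorphism $L_v \to L_w$ preserving $\phi_S$ can exist only if both $S_v$ and $S_w$ are empty: otherwise the cell-isomorphism would have to send some distinctly colored vertex to a null-colored vertex or to another distinctly colored vertex with a different color. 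Conversely, if $S_v = S_w = \emptyset$, the cell-isomorphism between $L_v$ and $L_w$ guaranteed by Theorem \ref{structjellyfish} preserves $\phi_S$ because both legs are uniformly null-colored. Thus $(\phi_S)_{proj}(v) = (\phi_S)_{proj}(w)$ holds precisely when $v = w$ or when both $S_v$ and $S_w$ are empty, which means that $(\phi_S)_{proj}$ gives a distinct color to each vertex of $S_{proj}$ and the same color to all vertices outside $S_{proj}$, i.e., it equals $\phi_{S_{proj}}$ up to renaming. Applying Proposition \ref{proptranslate2} once more on $H$ then turns condition (ii) of Lemma \ref{lemmajellyfish-cell} into condition (ii) of the present lemma.

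Putting the three pieces together yields the biconditional. The only nontrivial step is the identification of $(\phi_S)_{proj}$ with $\phi_{S_{proj}}$, which rests on the fact that all non-null colors used by $\phi_S$ are distinct; everything else is a direct translation of the distinguishing-labeling result via Proposition \ref{proptranslate2}.
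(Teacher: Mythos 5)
Your proposal is correct and follows essentially the same route the paper intends: convert $S$ into the labeling $\phi_S$ and reduce to the celled jellyfish characterization of $\mathcal{P}_G$-distinguishing labelings (Lemma \ref{lemmajellyfish-cell}) via Proposition \ref{proptranslate2}. In fact you supply more detail than the paper, which omits the proof entirely; your verification that $(\phi_S)_{proj}$ coincides with $\phi_{S_{proj}}$ up to renaming of colors (using that the non-null colors of $\phi_S$ are pairwise distinct and the legs are disjoint) is exactly the step the paper leaves implicit, and it is sound.
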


We omit the proof as it can be obtained by converting $S$ into $\phi_S$ and applying Lemma \ref{lemmasameaut2b}.

\begin{figure}

\centering
\begin{tikzpicture}[
  every node/.style={draw=none, rounded rectangle, minimum width=2cm, minimum height=0.8cm},
  level distance=1.5cm,
  sibling distance=2cm
  ]

  \node[fill=lightgray] {$R_i$ (5)}
    child {node[fill=lightgray] {$X_1$ (10)}
    	child {node[fill=lightgray] {$Y_1$ (30)}}
	child {node[fill=lightgray] {$Y_2$ (20)}}
	}
    child {node[fill=lightgray] {$X_2$ (15)}}
    child {node[fill=lightgray] {$X_3$ (5)}
        child {node[fill=lightgray] {$Z_1$ (15)}}
      };
\end{tikzpicture}
\hspace*{2 em}
\begin{tikzpicture}[
  every node/.style={draw=none, rounded rectangle, minimum width=1.5cm, minimum height=0.8cm},
  level distance=1.5cm,
  sibling distance=2.75cm
  ]

  \node[fill=lightgray] {$2 \times 3 + (3-1) + 1 \times 2 = 10 $}
    child {node[fill=lightgray] {$(3-1) + (2-1) = 3$}
    	child {node[fill=lightgray] { $0$}}
	child {node[fill=lightgray] { $0$}}
	}
    child {node[fill=lightgray] {$0$}}
    child {node[fill=lightgray] {$ (3-1) =2$}
        child {node[fill=lightgray] {$0$}}
      };
\end{tikzpicture}
\caption{Using the anisotropic component in Figure \ref{fig:exampleAi}, the right tree shows the computation for $Fix((L_v, \mathcal{P}_G)$ using equation (\ref{newformula2}). }
\label{fig:exampleAi-fix}
\end{figure}
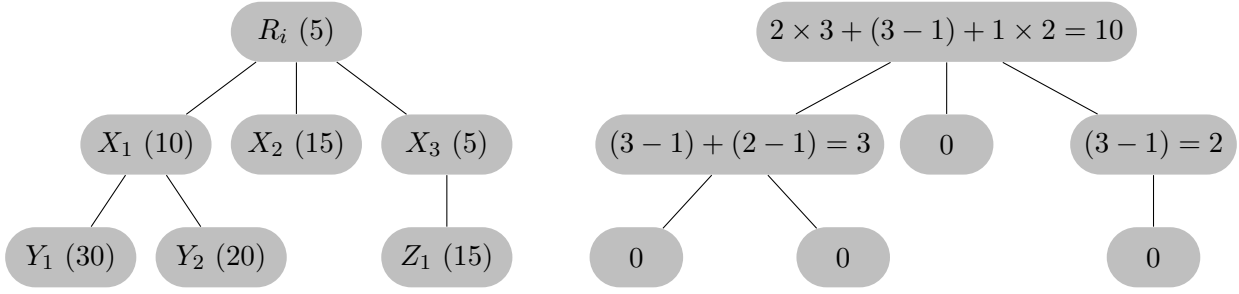


\begin{theorem}  
\label{thmjellyfish3}
Let $J_i$ be the jellyfish graph obtained by performing the steps (M1), (M2) and (M3) on $G_i$.   Suppose $H$ is its head and $L_v$ is one of its legs.  When 
$Fix(L_v, \mathcal{P}_G) = 0$, then $Fix(J_i, \mathcal{P}_G) = Fix(H)$; otherwise, $Fix(J_i, \mathcal{P}_G) = |V(H)| \times Fix(L_v, \mathcal{P}_G)$.
Equivalently,
$$ Fix(J_i, \mathcal{P}_G) =  Fix(H) \cdot \mathds{1}_{(Fix(L_v, \mathcal{P}) = 0)} + |V(H)| \times Fix(L_v, \mathcal{P}_G) \cdot \mathds{1}_{(Fix(L_v, \mathcal{P}_G) \neq 0)}. $$ 
\end{theorem}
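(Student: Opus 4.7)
The plan is to mirror the proof of Theorem \ref{thmjellyfish2}, replacing ordinary automorphisms with cell-preserving ones and using Lemma \ref{lemmajellyfish3} in place of Lemma \ref{lemmajellyfish2}. Two preliminary observations make this translation work smoothly. First, since $H = J_i[R_i]$ and all vertices of $H$ sit in the single cell $R_i$, every automorphism of $H$ is automatically cell-preserving with respect to $\mathcal{P}_G$, so $Fix(H, \mathcal{P}_G) = Fix(H)$. Second, by Theorem \ref{structjellyfish}, the legs of $J_i$ are pairwise cell-isomorphic, so the quantity $Fix(L_v, \mathcal{P}_G)$ depends only on $J_i$ and not on the choice of $v$; in particular the notation $Fix(L_v, \mathcal{P}_G)$ is well-defined.

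For the upper bound I would argue in two cases. If $Fix(L_v, \mathcal{P}_G) = 0$, let $S$ be a minimum fixing set of $H$, so $|S| = Fix(H)$. Each $S_v$ is then either $\emptyset$ (which is vacuously a $\mathcal{P}_G$-fixing set of $L_v$ since $L_v$ has no nontrivial cell-preserving automorphism) or a singleton (also a fixing set), and $S_{proj}$ contains $S$ and therefore already fixes $H$; by Lemma \ref{lemmajellyfish3}, $S$ is a $\mathcal{P}_G$-fixing set of $J_i$. If $Fix(L_v, \mathcal{P}_G) \ge 1$, pick a minimum $\mathcal{P}_G$-fixing set $S_v$ of $L_v$ for each $v \in V(H)$ and set $S = \bigcup_v S_v$. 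Each $S_v$ is nonempty, so $S_{proj} = V(H)$, which trivially fixes $H$; Lemma \ref{lemmajellyfish3} again yields that $S$ is a $\mathcal{P}_G$-fixing set of $J_i$, of size $|V(H)| \times Fix(L_v, \mathcal{P}_G)$.

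For the lower bound, let $S$ be any minimum $\mathcal{P}_G$-fixing set of $J_i$. Lemma \ref{lemmajellyfish3} forces $S_v$ to be a $\mathcal{P}_G$-fixing set of $L_v$ for every $v$, giving $|S| = \sum_v |S_v| \ge |V(H)| \times Fix(L_v, \mathcal{P}_G)$. The same lemma forces $S_{proj}$ to be a fixing set of $H$, and since $|S_{proj}| \le |S|$, also $|S| \ge Fix(H)$. Taking the larger of the two lower bounds matches the upper bound case by case: when $Fix(L_v, \mathcal{P}_G) = 0$ the $H$-bound dominates and yields $Fix(J_i, \mathcal{P}_G) = Fix(H)$; when $Fix(L_v, \mathcal{P}_G) \ge 1$ the leg-bound dominates (using $|V(H)| \ge Fix(H)$) and yields $Fix(J_i, \mathcal{P}_G) = |V(H)| \times Fix(L_v, \mathcal{P}_G)$.

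The only real subtlety is verifying the $\mathcal{P}_G$ version of Lemma \ref{lemmajellyfish2} (stated as Lemma \ref{lemmajellyfish3}) actually applies to the cell-preserving setting, but this is precisely the content of that lemma; once it is in hand, the argument is essentially a two-line modification of the proof of Theorem \ref{thmjellyfish2}. No other obstacles arise, since the cell structure only restricts which automorphisms we must break, and both the construction of the fixing sets and the matching of lower bounds carry over verbatim.
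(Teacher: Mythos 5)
Your proof is correct and follows essentially the same route the paper intends: it transplants the two-sided argument of Theorem \ref{thmjellyfish2} into the cell-preserving setting via Lemma \ref{lemmajellyfish3}, together with the observations that $Fix(H,\mathcal{P}_G)=Fix(H)$ and that the legs are pairwise cell-isomorphic. Nothing further is needed.
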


The proof of Theorem \ref{thmjellyfish3} is similar to that of Theorem \ref{thmjellyfish2}.  Also, $Fix(H, \mathcal{P}_G) = Fix(H)$.  As with $D(L_v, \mathcal{P}_G)$, we will compute $Fix(L_v, \mathcal{P}_G)$ directly from $A_i$.   Assume $x$ is a vertex in $L_v$ and $L_x$ is the subtree rooted at $x$.  Let cell $X$ contain $x$. Our goal is to compute $Fix(L_x, \mathcal{P}_G)$ recursively using Theorem \ref{fixtree}.  When $x$ is a leaf, $Fix(L_x, \mathcal{P}_G) = 0$.  Otherwise, let the children of $X$ in $A_i$ be $Y_1, Y_2, \hdots, Y_{\ell}$.    Assume $\mathcal{T}$ contains the subtrees rooted at the children of $x$.   We need to separate the trees in $\mathcal{T}$ into cell-isomorphism classes.   We noted in the previous section that 
$$\mathcal{T} = m_1 L_{y_1} \cup m_2 L_{y_2} \cup \cdots \cup m_\ell L_{y_\ell}$$
where $y_j \in Y_j$ and $m_j = |Y_j|/|X|$ for $j = 1, \hdots, \ell$.   Applying Theorem \ref{fixtree} in this setting, 
\begin{equation}
\label{newformula2}
 Fix(L_x, \mathcal{P}_G) = \sum_{j=1}^\ell  \left[(m_j -1) \cdot \mathds{1}_{(Fix(L_{y_j}, \mathcal{P}_G) = 0)} + \left(m_j \times Fix(L_{y_j}, \mathcal{P}_G)\right) \cdot  \mathds{1}_{(Fix(T_{y_j}, \mathcal{P}_G) \neq 0)} \right]. 
\end{equation}
Running the postorder traversal on $A_i$ and updating the values of $Fix(L_x, \mathcal{P}_G)$ at each cell $X$ such that $x \in X$, we obtain $Fix(L_v, \mathcal{P}_G)$ at the root of $A_i$.   See Figure \ref{fig:exampleAi-fix} for an example.  Like $D(L_v, \mathcal{P}_G)$,  given $A_i$ and the sizes of its cells, the computation of $Fix(L_v, \mathcal{P}_G)$ can be done in time linear in the size of $A_i$.  

\begin{theorem}
Let $G$ be an amenable graph, the fixing number of $G$ can be computed in $O((n+m) \log n)$ time where $n = |V(G)|$ and $m = |E(G)|$. 
\label{mainfixthm}
\end{theorem}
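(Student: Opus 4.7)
My plan is to follow the same blueprint as the proof of Theorem \ref{maindistthm}, but take advantage of the simpler additive decomposition from Theorem \ref{mainthm1b}. Since $Fix(G) = \sum_{i=1}^k Fix(G_i, \mathcal{P}_G)$, the task reduces to (a) identifying the anisotropic components $A_1, \ldots, A_k$ and the stable partition $\mathcal{P}_G$, (b) computing each $Fix(G_i, \mathcal{P}_G)$ in time linear in the size of $G_i$, and (c) summing.

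For step (a), I invoke Arvind et al.'s algorithm from \cite{Arvind2017GraphIsomorphism}, which in $O((n+m)\log n)$ time produces $\mathcal{P}_G$, the cell sizes $|X_i|$, the pairwise degree counts $d_{i,j}$, a classification of every cell as homogeneous or heterogeneous and every cell pair as isotropic or anisotropic, and a designated root $R_i$ of each anisotropic component $A_i$.

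For step (b), Lemma \ref{lemmaconvert2} lets me replace $G_i$ by the jellyfish graph $J_i$ without changing the fixing number, and Theorem \ref{thmjellyfish3} expresses $Fix(J_i, \mathcal{P}_G)$ in terms of $Fix(H)$ and $Fix(L_v, \mathcal{P}_G)$, where $H = J_i[R_i]$ is the head and $L_v$ is any leg. The head type is pinned down by the classification of $R_i$ together with $|R_i|$: a homogeneous $R_i$ yields $H = K_{|R_i|}$ and $Fix(H) = |R_i| - 1$; a heterogeneous $R_i$ of size $5$ yields $H = C_5$ and $Fix(H) = 2$; a heterogeneous $R_i$ of even size $|R_i| = 2r$ yields $H = \overline{rK_2}$, which by Lemma \ref{lemmasameaut2a} has the same fixing number as $rK_2$, and Proposition \ref{propcount2} gives $Fix(rK_2) = r$. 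Each value is obtainable in constant time from the output of Arvind et al.'s preprocessing. The leg value $Fix(L_v, \mathcal{P}_G)$ is computed by running a postorder traversal of $A_i$ and applying equation (\ref{newformula2}) at every cell, using the multiplicities $m_j = |Y_j|/|X|$ read off from the cell sizes. The work done at cell $X$ is proportional to its number of children in $A_i$, so the traversal takes time linear in the size of $A_i$, hence in $|V(G_i)| + |E(G_i)|$.

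Summing over $i$, step (b) contributes $O(n+m)$ in total, which is dominated by the $O((n+m)\log n)$ preprocessing in step (a). Unlike the computation of $D(G)$, no binary search is needed here, because the recursion (\ref{newformula2}) evaluates to the exact integer $Fix(L_v, \mathcal{P}_G)$ in a single bottom-up pass. I expect the main (and mild) obstacle to be a careful case analysis when reading off $Fix(H)$ for the three possible head types and when selecting the correct indicator branch in (\ref{newformula2}) at each cell; both are routine once the type and size of $R_i$ are in hand. Combining the steps yields the claimed $O((n+m)\log n)$ bound.
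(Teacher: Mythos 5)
Your proposal is correct and follows essentially the same route as the paper's proof: Arvind et al.'s $O((n+m)\log n)$ preprocessing to obtain the anisotropic components, cell sizes, and types; the reduction $Fix(G_i,\mathcal{P}_G)=Fix(J_i,\mathcal{P}_G)$ via Lemma \ref{lemmaconvert2}; the same three-way case analysis for $Fix(H)$; the postorder evaluation of equation (\ref{newformula2}) for the legs; and the final summation from Theorem \ref{mainthm1b}. Your observation that no binary search is needed (so the per-component work is linear rather than $O(n_i\log n_i)$ as in the distinguishing-number case) matches the paper's accounting as well.
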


\begin{proof}
In the proof of Theorem \ref{maindistthm}, we noted that the anisotropic components $A_1, A_2, \hdots, A_k$ of $G$ can be obtained in $O((n+m) \log n)$ time,  including the sizes of the cells in each $A_i$ and their individual types (homogeneous or heterogeneous).  The next step is to compute $Fix(G_i, \mathcal{P}_G)$ for each $i$.   We argued that it is easier to compute $Fix(J_i, \mathcal{P}_G)$ instead because $J_i$ is a jellyfish graph.  Theorem \ref{thmjellyfish3} provides the formula for $Fix(J_i, \mathcal{P}_G)$ which involves $Fix(L_v, \mathcal{P})$ and $Fix(H)$.  We already showed how to compute $Fix(L_v, \mathcal{P})$ in time linear in the size of $A_i$.  What is left is $Fix(H)$.  

Recall that $H = J_i[R_i]$.   If $R_i$ is homogeneous, $J_i[R_i]$ is a complete graph so $Fix(H) = |R_i| -1$.  If $R_i$ is heterogeneous with $|R_i| = 5$, then $J_i[R_i]$ is a $5$-cycle.  In this case, $Fix(H) = 2$.  If $|R_i|$ is even, then $J_i[R_i]$ is the complement of a matching graph $rK_2$ with $r = |R_i|/2$.  But $Fix(\overline{rK_2}) = F(rK_2)  = r$ from Proposition \ref{propcount2}.  Thus, knowing the type of $R_i$ and its size allows us to determine $Fix(H)$ in $O(1)$ time.   Combining this result with that for $Fix(L_v, \mathcal{P})$, we can compute $Fix(G_i, \mathcal{P}_G)$ in $O(n_i)$ time where $n_i = |V(G_i)|$, and all the $Fix(G_i, \mathcal{P}_G)$'s in $O(n)$ time.  

Hence, the bottleneck for computing $Fix(G)$ is the first part where we have to compute the anisotropic components of $G$ which takes $O((n+m) \log n)$ time.  The theorem follows.
\end{proof}

\section{Conclusion}

We combine all that we have learned about amenable graphs below.

\begin{theorem}
Let $G$ be an amenable graph.  There is a union of celled jellyfish graphs,  $\bigcup_{i=1}^k (J_i, \mathcal{P}_G)$, 
so that $Aut(G) =  \prod_{i=1}^k Aut(J_i, \mathcal{P}_G)$.   Furthermore,   $D(G) = \max \{ D(J_i, \mathcal{P}_G), i = 1, \hdots, k \}$ and $Fix(G) =  \sum_{i=1}^k Fix(G_i, \mathcal{P}_G)$.  
Moreover,  both $D(G)$ and $Fix(G)$ can be computed in  $O((n+m) \log n)$ time where $n = |V(G)|$ and $m = |E(G)|$. 
\end{theorem}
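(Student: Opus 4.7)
The plan is to observe that this final theorem is a compilation of results already established, so the proof is essentially a matter of chaining the prior theorems together and confirming that the pieces fit. First I would invoke Lemma \ref{autprod} to write $Aut(G) = \prod_{i=1}^k Aut(G_i, \mathcal{P}_G)$, where $G_i = G[\cup_{X \in A_i} X]$ is the induced subgraph on the $i$-th anisotropic component. Then I would apply Theorem \ref{thmconvert} cell-by-cell to replace each $G_i$ by the graph $J_i$ obtained via the modifications (M1)--(M3), which preserves the cell-preserving automorphism group; that is, $Aut(G_i, \mathcal{P}_G) = Aut(J_i, \mathcal{P}_G)$. Finally, Theorem \ref{structjellyfish} guarantees that each $J_i$ is in fact a jellyfish graph whose legs are pairwise cell-isomorphic, so the family $\{(J_i, \mathcal{P}_G)\}_{i=1}^k$ is indeed a union of celled jellyfish graphs, and the desired decomposition $Aut(G) = \prod_{i=1}^k Aut(J_i, \mathcal{P}_G)$ follows.

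Next I would derive the two combinatorial formulas. For the distinguishing number, Theorem \ref{mainthm1} gives $D(G) = \max\{D(G_i, \mathcal{P}_G) : i = 1, \ldots, k\}$, and substituting $D(G_i, \mathcal{P}_G) = D(J_i, \mathcal{P}_G)$ from Theorem \ref{thmconvert} yields the stated max expression. For the fixing number, Theorem \ref{mainthm1b} gives $Fix(G) = \sum_{i=1}^k Fix(G_i, \mathcal{P}_G)$ directly; one could equivalently rewrite this in terms of $Fix(J_i, \mathcal{P}_G)$ via Lemma \ref{lemmaconvert2}, but the formula as stated is already established.

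For the running time, I would simply cite Theorems \ref{maindistthm} and \ref{mainfixthm}, each of which establishes the $O((n+m)\log n)$ bound for computing the respective invariant. Both algorithms first identify the anisotropic components (in $O((n+m)\log n)$ time via \cite{Arvind2017GraphIsomorphism}) and then process each component by a post-order traversal of $A_i$ using the recurrences in equations (\ref{newformula}) and (\ref{newformula2}), with a binary search in the distinguishing case. Summing the linear per-component work over all components gives $O(n)$, so the component-identification step dominates.

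Since every ingredient has already been proved, there is no genuine obstacle; the only care needed is notational bookkeeping --- making sure the two different representations (the original induced subgraphs $G_i$ and the transformed jellyfish graphs $J_i$) are used consistently, and noting that the equality $Aut(G_i,\mathcal{P}_G) = Aut(J_i,\mathcal{P}_G)$ allows the fixing-number sum to be written over either family. The proof is therefore a short paragraph that strings together Lemma \ref{autprod}, Theorem \ref{thmconvert}, Theorem \ref{structjellyfish}, Theorems \ref{mainthm1} and \ref{mainthm1b}, and Theorems \ref{maindistthm} and \ref{mainfixthm}, with no new computation required.
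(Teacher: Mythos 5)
Your proposal is correct and matches the paper's intent exactly: this concluding theorem is stated as a summary, and the paper's implicit proof is precisely the chaining of Lemma \ref{autprod}, Theorems \ref{thmconvert} and \ref{structjellyfish}, Theorems \ref{mainthm1} and \ref{mainthm1b}, and the algorithmic Theorems \ref{maindistthm} and \ref{mainfixthm} that you describe. Your side remark that the fixing-number sum can be written over either the $G_i$ or the $J_i$ (via Lemma \ref{lemmaconvert2}) is also the right observation about the theorem's notation.
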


Given the importance of color refinement in graph isomorphism, there will likely be more research on amenable graphs.  We hope that conceptualizing an amenable graph as a union of celled jellyfish graphs will provide valuable insight.


\bibliography{dist-bib-meta}
\bibliographystyle{abbrv}

\end{document}